\documentclass{amsart}

\headheight=8pt
\topmargin=0pt
\textheight=624pt
\textwidth=432pt
\oddsidemargin=18pt
\evensidemargin=18pt

\usepackage{amssymb} \usepackage{amsbsy,
  amsthm, amsmath,amstext, amsopn} \usepackage[all]{xy}
\usepackage{amsfonts} \usepackage{amscd} \usepackage{stmaryrd}
\hyphenation{para-met-riz-ed para-met-rize} \usepackage{color}

\newtheorem{thm}{Theorem} [section]
\newtheorem{lemma}[thm]{Lemma}

\newtheorem{corollary}[thm]{Corollary}
\newtheorem{prop}[thm]{Proposition}
\newtheorem{notation}[thm]{Notation}

\theoremstyle{definition}

\newtheorem{defn}[thm]{Definition}
\newtheorem{workingdefn}[thm]{Working Definition}

\newtheorem{example}[thm]{Example}

\theoremstyle{remark}

\newtheorem{remark}[thm]{Remark}

\begin{document}

\numberwithin{equation}{section}

\newcommand{\hs}{\mbox{\hspace{.4em}}}
\newcommand{\ds}{\displaystyle}
\newcommand{\bd}{\begin{displaymath}}
\newcommand{\ed}{\end{displaymath}}
\newcommand{\bcd}{\begin{CD}}
\newcommand{\ecd}{\end{CD}}

\newcommand{\on}{\operatorname}
\newcommand{\proj}{\operatorname{Proj}}
\newcommand{\bproj}{\underline{\operatorname{Proj}}}

\newcommand{\spec}{\operatorname{Spec}}
\newcommand{\Spec}{\operatorname{Spec}}
\newcommand{\bspec}{\underline{\operatorname{Spec}}}
\newcommand{\pline}{{\mathbf P} ^1}
\newcommand{\aline}{{\mathbf A} ^1}
\newcommand{\pplane}{{\mathbf P}^2}
\newcommand{\aplane}{{\mathbf A}^2}
\newcommand{\coker}{{\operatorname{coker}}}
\newcommand{\ldb}{[[}
\newcommand{\rdb}{]]}

\newcommand{\Sym}{\operatorname{Sym}^{\bullet}}
\newcommand{\Symp}{\operatorname{Sym}}
\newcommand{\Pic}{\bf{Pic}}
\newcommand{\Aut}{\operatorname{Aut}}
\newcommand{\PAut}{\operatorname{PAut}}

\newcommand{\too}{\twoheadrightarrow}
\newcommand{\C}{{\mathbf C}}
\newcommand{\Z}{{\mathbf Z}}
\newcommand{\Q}{{\mathbf Q}}
\newcommand{\R}{{\mathbf R}}
\newcommand{\Cx}{{\mathbf C}^{\times}}
\newcommand{\Cbar}{\overline{\C}}
\newcommand{\Cxbar}{\overline{\Cx}}
\newcommand{\cA}{{\mathcal A}}
\newcommand{\cS}{{\mathcal S}}
\newcommand{\cV}{{\mathcal V}}
\newcommand{\cM}{{\mathcal M}}
\newcommand{\bA}{{\mathbf A}}
\newcommand{\bN}{{\mathbf N}}
\newcommand{\bG}{{\mathbf G}}
\newcommand{\cB}{{\mathcal B}}
\newcommand{\cC}{{\mathcal C}}
\newcommand{\cD}{{\mathcal D}}
\newcommand{\fd}{{\mathfrak d}}
\newcommand{\D}{{\mathcal D}}
\newcommand{\DD}{{\mathbb D}}
\newcommand{\cs}{{\mathbf C} ^*}
\newcommand{\boldc}{{\mathbf C}}
\newcommand{\cE}{{\mathcal E}}
\newcommand{\cF}{{\mathcal F}}
\newcommand{\bF}{{\mathbf F}}
\newcommand{\cG}{{\mathcal G}}
\newcommand{\G}{{\mathbb G}}
\newcommand{\cH}{{\mathcal H}}
\newcommand{\CI}{{\mathcal I}}
\newcommand{\cJ}{{\mathcal J}}
\newcommand{\cK}{{\mathcal K}}
\newcommand{\cL}{{\mathcal L}}
\newcommand{\baL}{{\overline{\mathcal L}}}
\newcommand{\M}{{\mathcal M}}
\newcommand{\Mf}{{\mathfrak M}}
\newcommand{\bM}{{\mathbf M}}
\newcommand{\bm}{{\mathbf m}}
\newcommand{\cN}{{\mathcal N}}
\newcommand{\theo}{\mathcal{O}}
\newcommand{\cP}{{\mathcal P}}
\newcommand{\cR}{{\mathcal R}}
\newcommand{\Pp}{{\mathbb P}}
\newcommand{\boldp}{{\mathbf P}}
\newcommand{\boldq}{{\mathbf Q}}
\newcommand{\bbL}{{\mathbf L}}
\newcommand{\cQ}{{\mathcal Q}}
\newcommand{\cO}{{\mathcal O}}
\newcommand{\cT}{{\mathcal T}}
\newcommand{\Oo}{{\mathcal O}}
\newcommand{\cY}{{\mathcal Y}}
\newcommand{\bX}{{\mathbf X}}
\newcommand{\OX}{{\Oo_X}}
\newcommand{\OY}{{\Oo_Y}}
\newcommand{\cZ}{{\mathcal Z}}
\newcommand{\otY}{{\underset{\OY}{\ot}}}
\newcommand{\otX}{{\underset{\OX}{\ot}}}
\newcommand{\cU}{{\mathcal U}}\newcommand{\cX}{{\mathcal X}}
\newcommand{\cW}{{\mathcal W}}
\newcommand{\boldz}{{\mathbf Z}}
\newcommand{\qgr}{\operatorname{q-gr}}
\newcommand{\gr}{\operatorname{gr}}
\newcommand{\rk}{\operatorname{rk}}
\newcommand{\Sh}{\operatorname{Sh}}
\newcommand{\SH}{{\underline{\operatorname{Sh}}}}
\newcommand{\End}{\operatorname{End}}
\newcommand{\uEnd}{\underline{\operatorname{End}}}
\newcommand{\Hom}{\operatorname{Hom}}
\newcommand{\bHom}{\underline{\operatorname{Hom}}}
\newcommand{\bHomY}{{\mathbf H}om_{\OY}}
\newcommand{\bHomX}{{\mathbf H}om_{\OX}}
\newcommand{\Ext}{\operatorname{Ext}}
\newcommand{\bExt}{\operatorname{\bf{Ext}}}
\newcommand{\Tor}{\operatorname{Tor}}

\newcommand{\inv}{^{-1}}
\newcommand{\airtilde}{\widetilde{\hspace{.5em}}}
\newcommand{\airhat}{\widehat{\hspace{.5em}}}
\newcommand{\nt}{^{\circ}}
\newcommand{\del}{\partial}

\newcommand{\supp}{\operatorname{supp}}
\newcommand{\GK}{\operatorname{GK-dim}}
\newcommand{\hd}{\operatorname{hd}}
\newcommand{\id}{\operatorname{id}}
\newcommand{\res}{\operatorname{res}}
\newcommand{\lrar}{\leadsto}
\newcommand{\im}{\operatorname{Im}}
\newcommand{\HH}{\operatorname{H}}
\newcommand{\TF}{\operatorname{TF}}
\newcommand{\Bun}{\operatorname{Bun}}

\newcommand{\F}{\mathcal{F}}
\newcommand{\Ff}{\mathbb{F}}
\newcommand{\nthord}{^{(n)}}
\newcommand{\Gr}{{\mathfrak{Gr}}}

\newcommand{\Fr}{\operatorname{Fr}}
\newcommand{\GL}{\operatorname{GL}}
\newcommand{\gl}{\mathfrak{gl}}
\newcommand{\SL}{\operatorname{SL}}
\newcommand{\ff}{\footnote}
\newcommand{\ot}{\otimes}
\def\Ext{\operatorname {Ext}}
\def\Hom{\operatorname {Hom}}
\def\Ind{\operatorname {Ind}}
\def\bbZ{{\mathbb Z}}

\newcommand{\nc}{\newcommand}
\nc{\ol}{\overline} \nc{\cont}{\on{cont}} \nc{\rmod}{\on{mod}}
\nc{\Mtil}{\widetilde{M}} \nc{\wb}{\overline} \nc{\wt}{\widetilde}
\nc{\wh}{\widehat} \nc{\sm}{\setminus} \nc{\mc}{\mathcal}
\nc{\mbb}{\mathbb}  \nc{\K}{{\mc K}} \nc{\Kx}{{\mc K}^{\times}}
\nc{\Ox}{{\mc O}^{\times}} \nc{\unit}{{\bf \on{unit}}}
\nc{\boxt}{\boxtimes} \nc{\xarr}{\stackrel{\rightarrow}{x}}

\nc{\Ga}{\G_a}
 \nc{\PGL}{{\on{PGL}}}
 \nc{\PU}{{\on{PU}}}

\nc{\h}{{\mathfrak h}} \nc{\kk}{{\mathfrak k}}
 \nc{\Gm}{\G_m}
\nc{\Gabar}{\wb{\G}_a} \nc{\Gmbar}{\wb{\G}_m} \nc{\Gv}{G^\vee}
\nc{\Tv}{T^\vee} \nc{\Bv}{B^\vee} \nc{\g}{{\mathfrak g}}
\nc{\gv}{{\mathfrak g}^\vee} \nc{\RGv}{\on{Rep}\Gv}
\nc{\RTv}{\on{Rep}T^\vee}
 \nc{\Flv}{{\mathcal B}^\vee}
 \nc{\TFlv}{T^*\Flv}
 \nc{\Fl}{{\mathfrak Fl}}
\nc{\RR}{{\mathcal R}} \nc{\Nv}{{\mathcal{N}}^\vee}
\nc{\St}{{\mathcal St}} \nc{\ST}{{\underline{\mathcal St}}}
\nc{\Hec}{{\bf{\mathcal H}}} \nc{\Hecblock}{{\bf{\mathcal
H_{\alpha,\beta}}}} \nc{\dualHec}{{\bf{\mathcal H^\vee}}}
\nc{\dualHecblock}{{\bf{\mathcal H^\vee_{\alpha,\beta}}}}
\newcommand{\ramBun}{{\bf{Bun}}}
\newcommand{\ramBuno}{\ramBun^{\circ}}

\nc{\Buntheta}{{\bf Bun}_{\theta}} \nc{\Bunthetao}{{\bf
Bun}_{\theta}^{\circ}} \nc{\BunGR}{{\bf Bun}_{G_\R}}
\nc{\BunGRo}{{\bf Bun}_{G_\R}^{\circ}}
\nc{\HC}{{\mathcal{HC}}}
\nc{\risom}{\stackrel{\sim}{\to}} \nc{\Hv}{{H^\vee}}
\nc{\bS}{{\mathbf S}}
\def\Rep{\operatorname {Rep}}
\def\Conn{\operatorname {Conn}}

\nc{\Vect}{{\operatorname{Vect}}}
\nc{\Hecke}{{\operatorname{Hecke}}}

\newcommand{\ZZ}{{Z_{\bullet}}}
\nc{\HZ}{{\mc H}\ZZ} \nc{\eps}{\epsilon}

\nc{\CN}{\mathcal N} \nc{\BA}{\mathbb A}
\nc{\XYX}{X\times_Y X}

\nc{\ul}{\underline}

\nc{\bn}{\mathbf n} \nc{\Sets}{{\on{Sets}}} \nc{\Top}{{\on{Top}}}

\nc{\Simp}{{\mathbf \Delta}} \nc{\Simpop}{{\mathbf\Delta^\circ}}

\nc{\Cyc}{{\mathbf \Lambda}} \nc{\Cycop}{{\mathbf\Lambda^\circ}}

\nc{\Mon}{{\mathbf \Lambda^{mon}}}
\nc{\Monop}{{(\mathbf\Lambda^{mon})\circ}}

\nc{\Aff}{{\on{Aff}}} \nc{\Sch}{{\on{Sch}}}

\nc{\bul}{\bullet}
\nc{\module}{{\operatorname{-mod}}}

\nc{\dstack}{{\mathcal D}}

\nc{\BL}{{\mathbb L}}

\nc{\BD}{{\mathbb D}}

\nc{\BR}{{\mathbb R}}

\nc{\BT}{{\mathbb T}}

\nc{\SCA}{{\mc{SCA}}}
\nc{\DGA}{{\mc DGA}}

\nc{\DSt}{{DSt}}

\nc{\lotimes}{{\otimes}^{\mathbf L}}

\nc{\bs}{\backslash}

\nc{\Lhat}{\widehat{\mc L}}

\newcommand{\Coh}{{\on{Coh}}}

\nc{\QCoh}{QC}
\nc{\QC}{QC}
\nc\Perf{\on{Perf}}
\nc{\Cat}{{\on{Cat}}}
\nc{\dgCat}{{\on{dgCat}}}
\nc{\bLa}{{\mathbf \Lambda}}

\nc{\RHom}{\mathbf{R}\hspace{-0.15em}\on{Hom}}
\nc{\REnd}{\mathbf{R}\hspace{-0.15em}\on{End}}
\nc{\colim}{\on{colim}}
\nc{\oo}{\infty}
\nc\Mod{\on{Mod}}

\nc\fh{\mathfrak h}
\nc\al{\alpha}
\nc\la{\alpha}
\nc\BGB{B\bs G/B}
\nc\QCb{QC^\flat}
\nc\qc{\cQ}

\nc{\fg}{\mathfrak g}

\nc{\fn}{\mathfrak n}
\nc{\Map}{\on{Map}} \nc{\fX}{\mathfrak X}

\nc{\ch}{\check}
\nc{\fb}{\mathfrak b} \nc{\fu}{\mathfrak u} \nc{\st}{{st}}
\nc{\fU}{\mathfrak U}
\nc{\fZ}{\mathfrak Z}
\nc{\fz}{\mathfrak z}
\nc\fk{\mathfrak k} \nc\fp{\mathfrak p}

\nc{\RP}{\mathbf{RP}} \nc{\rigid}{\text{rigid}}
\nc{\glob}{\text{glob}}

\nc{\cI}{\mathcal I}

\nc{\La}{\mathcal L}

\nc{\quot}{/\hspace{-.25em}/}

\nc\aff{\it{aff}}
\nc\BS{\mathbb S}

\nc\Loc{{\mc Loc}}
\nc\Tr{{\mc Tr}}
\nc\Ch{{\mc Ch}}

\nc\git{/\hspace{-0.2em}/}
\nc{\fc}{\mathfrak c}
\nc\BC{\mathbb C}
\nc\BZ{\mathbb Z}
\nc\BH{\mathbb H}

\nc\stab{\text{\it st}}
\nc\Stab{\text{\it St}}
\nc\dg{\text{\it dg}}
\nc\DG{\text{\it DG}}

\nc\perf{\on{-perf}}

\nc\intHom{\mathcal{H}om}

\nc\gtil{\widetilde\fg}

\nc\mon{\text{\it mon}}
\nc\bimon{\text{\it bimon}}

\newcommand{\actson}{\circlearrowright}
%%%%%%%%%%%%%%%%%%%%%%%%%%%%%%%%%%%%%%%%%%%
%%%%%%%%%%%%%%%%%%%%%%%%%%%%%%%%%%%%%%%%%%%
%%%%%%%%%%%%%%%%%%%%%%%%%%%%%%%%%%%%%%%%%%%
%%%%%%%%%%%%%%%%%%%%%%%%%%%%%%%%%%%%%%%%%%%

\title[Beilinson-Bernstein localization 
over the Harish-Chandra center]{Beilinson-Bernstein localization \\
over the Harish-Chandra center}

\author{David Ben-Zvi} \address{Department of Mathematics\\University
  of Texas\\Austin, TX 78712-0257} \email{benzvi@math.utexas.edu}
\author{David Nadler} \address{Department of Mathematics\\University
  of California\\Berkeley, CA 94720-3840}
\email{nadler@math.berkeley.edu}

\begin{abstract}
We present a simple proof of a strengthening of the derived
Beilinson-Bernstein localization theorem using the formalism of
descent in derived algebraic geometry. The arguments and results apply to arbitrary modules  
without the need to fix infinitesimal character. Roughly speaking, we
demonstrate that all $\fU\fg$-modules are the invariants, or 
equivalently coinvariants, of the action of intertwining functors (a
refined form of Weyl group symmetry) on monodromic
$\cD$-modules on the basic affine space $G/N$. This is a quantum version of
descent for the Grothendieck-Springer simultaneous resolution. 
In an appendix we present an alternative perspective, 
which identifies the descent data in both classical and quantum versions as a categorical action
of Demazure operators.

\end{abstract}

\maketitle

%%%%%%%%%%%%%%%%%%%%%%%%%%%%%%%%%%%%%%%%%%%
%%%%%%%%%%%%%%%%%%%%%%%%%%%%%%%%%%%%%%%%%%%
%%%%%%%%%%%%%%%%%%%%%%%%%%%%%%%%%%%%%%%%%%%
%%%%%%%%%%%%%%%%%%%%%%%%%%%%%%%%%%%%%%%%%%%

\tableofcontents

%%%%%%%%%%%%%%%%%%%%%%%%%%%%%%%%%%%%%%%%%%%
%%%%%%%%%%%%%%%%%%%%%%%%%%%%%%%%%%%%%%%%%%%
%%%%%%%%%%%%%%%%%%%%%%%%%%%%%%%%%%%%%%%%%%%
%%%%%%%%%%%%%%%%%%%%%%%%%%%%%%%%%%%%%%%%%%%

\section{Introduction}
The Beilinson-Bernstein localization theorem is  the
quintessential result in geometric representation theory.
To recall its statement,
fix a complex reductive group $G$, Borel subgroup $B\subset G$
with unipotent radical $N\subset B$,  and let $H = B/N$ denote the universal Cartan.
Let  $\fg, \fb, \fn$ and $\fh$ denote their  respective Lie algebras, and $W$ the Weyl group of $G$.

Fix $\lambda\in \fh^*$, and let $\cP_\lambda=\cD_{\lambda}(G/B)$
denote the dg category of $\lambda$-twisted $\D$-modules on the
flag variety.\footnote{Since we are interested in performing algebraic operations on derived categories, it is important to work in a proper homotopical setting. The term ``dg category" will stand throughout for a $\C$-linear pre-triangulated (or stable) dg category, and all operations with dg categories will be taken in the derived sense, i.e., we work in the $\oo$-category of dg categories. See Section~\ref{cat context} below for a brief summary of
our working context.}  By $\lambda$-twisted  $\D$-modules, one can take $\D$-modules on the basic affine space $G/N$ which
are weakly $H$-equivariant and have   monodromy $\lambda$
along the $H$-orbits. (The notation $\cP_\lambda$ reflects 
the analogy with principal series representations of real or $p$-adic
groups.) 

Let $\fU\fg$ denote the universal enveloping algebra of $\fg$, and $\fZ\fg  \simeq \cO(\fh^*\git W) \subset \fU\fg$ the Harish-Chandra center.
%Recall the Harish Chandra isomorphism  $\Spec \fZ\fg \simeq \fh^*\git W$.
Let $\fU\fg\module_{[\lambda]}$ denote the
dg category of $\fU\fg$-modules with infinitesimal character
the $W$-orbit $[\lambda]\in \fh^*\git W\simeq\Spec \fZ\fg$.

\begin{thm}[\cite{BB}]\label{bb thm} For $\lambda\in \fh^*$  regular,
localization and global sections  provide inverse equivalences of
derived categories
$$\xymatrix{
\Delta:\fU\fg\module_{[\lambda]} \ar@{<->}[r]^-\sim & \cP_\lambda:\Gamma
}
$$

\end{thm}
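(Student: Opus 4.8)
The plan is to carry out the classical Beilinson--Bernstein argument, upgraded to the dg/$\oo$-categorical setting. Write $\fU\fg_{[\lambda]} := \fU\fg\otimes_{\fZ\fg}\C_{[\lambda]}$ for the central reduction; since $\fU\fg$ is free over $\fZ\fg$ (a theorem of Kostant), this tensor product carries no derived correction and $\fU\fg\module_{[\lambda]}$ is simply the dg category of $\fU\fg_{[\lambda]}$-modules. Localization is $\Delta(M) := \cD_\lambda\otimes^{\mathbf L}_{\fU\fg_{[\lambda]}}M$, the left adjoint of $\Gamma = \Gamma(G/B,-)$, so the theorem reduces to showing the unit $M\to\Gamma\Delta(M)$ and the counit $\Delta\Gamma(\cM)\to\cM$ are both equivalences.

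First I would identify $\Gamma(G/B,\cD_\lambda)$. The order filtration on $\cD_\lambda$ has associated graded $p_*\cO_{T^*(G/B)} = \Sym\cT_{G/B}$, independent of $\lambda$, where $p\colon T^*(G/B)\to G/B$ is the cotangent bundle. As $p$ is affine and the Springer map $\mu\colon T^*(G/B)\to\cN\subset\fg^*$ is a proper resolution of the affine normal cone $\cN$ with $\mu_*\cO = \cO_\cN$ and no higher direct images, one gets $\Gamma(G/B,\gr\cD_\lambda) = \Gamma(T^*(G/B),\cO) = \cO(\cN)$ in degree $0$. An induction up the order filtration (each $F_i\cD_\lambda$ is a finite iterated extension of the acyclic sheaves $\Symp^j\cT_{G/B}$, $j\le i$), combined with the fact that $\Gamma$ commutes with the filtered colimit $\cD_\lambda = \colim_i F_i\cD_\lambda$, then shows $\Gamma(G/B,\cD_\lambda)$ is concentrated in degree $0$ for \emph{every} $\lambda$ and that the tautological filtered map $\fU\fg_{[\lambda]}\to\Gamma(G/B,\cD_\lambda)$ is an isomorphism (on associated gradeds it is the restriction $\cO(\fg^*)/I_\cN\to\cO(\cN)$, an isomorphism by Kostant's description of the nilpotent cone). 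Granting this, $\Delta$ (a left adjoint) and $\Gamma$ (finite cohomological dimension on the quasi-compact $G/B$) both preserve colimits, and $\fU\fg\module_{[\lambda]}$ is generated under colimits by the free module $\fU\fg_{[\lambda]}$, on which the unit is an equivalence by the previous sentence. Hence the unit is an equivalence everywhere: $\Delta$ is fully faithful for \emph{every} $\lambda$, and it remains only to prove essential surjectivity, equivalently conservativity of $\Gamma$.

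For conservativity I would pass to the regular dominant chamber. Choose $w\in W$ with $w\lambda$ dominant in the Beilinson--Bernstein sense ($\langle w\lambda,\alpha^\vee\rangle\notin\Z_{<0}$ for positive roots $\alpha$); regularity of $\lambda$ forces $w\lambda$ to be regular as well. For $w\lambda$ regular dominant one shows $G/B$ is $\cD_{w\lambda}$-affine: $\Gamma$ is $t$-exact on the heart --- via good filtrations, the multiplication operators coming from the decomposition $\cO(G/N) \cong \bigoplus_\mu V_\mu$ of functions on the basic affine space, and Kempf/Borel--Weil--Bott vanishing, dominance of $w\lambda$ being exactly what makes the resulting comparison maps on cohomology isomorphisms --- and $\Gamma$ kills no nonzero object of the heart, since a nonzero coherent $\cD_{w\lambda}$-module contains $\cD_{w\lambda}\cdot\cF$ for a nonzero coherent $\cO$-submodule $\cF$, and a dominant translate of $\cF$ is globally generated and acyclic with surjective global-sections comparison, so regularity makes $\Gamma$ faithful. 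Thus $\Gamma\colon\cP_{w\lambda}\risom\fU\fg\module_{[w\lambda]}$ is an equivalence of abelian, hence of dg, categories. To transport this back to $\lambda$ I would use the intertwining functor $I_w\colon\cP_\lambda\to\cP_{w\lambda}$: written along a reduced word for $w$ as a composite of Radon transforms across the $\pline$-bundles $G/P_\alpha\to G/B$, each factor is an equivalence of dg categories because the coroot pairings met along the word are nonzero by regularity of $\lambda$, and each factor is $G$-equivariant and so commutes with $\Gamma$ up to the canonical identification $\fU\fg_{[\lambda]} = \fU\fg_{[w\lambda]}$ (note $[\lambda] = [w\lambda]$ in $\fh^*\git W$) and a harmless shift. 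Composing, $\Gamma\circ I_w\simeq\Gamma$, so $\Gamma$ on $\cP_\lambda$ is an equivalence, with quasi-inverse $\Delta$.

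I expect the main obstacle to be the geometric content buried in the third paragraph: the positivity/vanishing package that makes $G/B$ genuinely $\cD_{w\lambda}$-affine in the regular dominant chamber, and the bookkeeping that the intertwining functors carry $\Gamma$ to $\Gamma$, so that the resulting equivalence depends only on the block $[\lambda]\in\fh^*\git W$ and not on the chosen representative $w\lambda$. Steps 1--2 are essentially formal once the associated-graded computation is in hand; it is precisely the dependence on this positivity package --- and the wish to eliminate the choice of chamber --- that makes a chamber-free, descent-theoretic proof attractive.
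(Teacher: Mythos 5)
Your outline is essentially the classical Beilinson--Bernstein argument, and it is correct in substance; but it is a genuinely different route from the one taken here, because this paper does not reprove Theorem~\ref{bb thm} at all -- it is quoted from \cite{BB} -- and the machinery developed in the body of the paper recovers it by a different mechanism. You prove the two halves separately: full faithfulness of $\Delta$ from the cohomology computation $\Gamma(G/B,\cD_\lambda)\simeq\fU\fg\otimes_{\fZ\fg}\C_{[\lambda]}$ (valid for all $\lambda$), and conservativity of $\Gamma$ by choosing a dominant translate $w\lambda$, invoking the positivity/vanishing package ($t$-exactness and faithfulness of $\Gamma$ in the dominant chamber), and transporting back with intertwining functors. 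The paper instead exploits the ambidexterity of the $(\Delta,\Gamma)$ adjunction (the Calabi--Yau property of $\cP$), conservativity of $\Delta$, and the Barr--Beck--Lurie theorem to identify $\fU\fg\module$ with $\cW$-(co)modules in $\cP$ uniformly over all of $\fh^*\git W$; as remarked after Theorem~\ref{monadic BB}, for fixed regular $[\lambda]$ one then deduces the equivalence of Theorem~\ref{bb thm} \`a la \cite{BMR1} from indecomposability of $\cP_\lambda$, with no choice of chamber and no dominance analysis, and the same formalism degenerates gracefully at singular $\lambda$ where your route breaks down (there $\fU\fg\module_{[\lambda]}$ appears only as the $\cW$-isotypic summand of $\cP_\lambda$). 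What your route buys is precisely what the monadic one does not immediately give: abelian-level information ($t$-exactness for dominant parameters, the amplitude bound by $\ell(w)$, the explicit identification of $\Gamma(\cD_\lambda)$). Two small caveats on your sketch: the identity $\Gamma\circ I_w\simeq\Gamma$ is not a formal consequence of $G$-equivariance of the Radon transforms -- it is a computation requiring the regularity hypotheses you list, with some care about standard versus costandard ($!$ versus $*$) versions along the reduced word -- and the passage from the abelian equivalence at $w\lambda$ to the statement for unbounded dg categories needs the finite cohomological dimension of $\Gamma$, which you use earlier and should cite again at that point.
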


\begin{remark}[Refinements] 
There are several standard refinements of the above statement.

1) One can bound the cohomological amplitude of $\Gamma$ by the length
of the Weyl group element $w\in W$ for which $w\cdot\lambda$ is dominant.
In particular, when $\lambda $ is dominant, the equivalence preserves
the corresponding abelian categories. 

%In this paper, we will only
%consider the above derived statement  not
%keeping track of its effect on $t$-structures.

2) For $\lambda\in \fh^*$  singular, one can exhibit $\fU\fg\module_{[\lambda]}$ as the quotient of
$\cP_\lambda$ by the kernel of $\Gamma$. Alternatively, one can realize $\fU\fg\module_{[\lambda]}$ as the subcategory
of $\cP_\lambda$ left-orthogonal to the kernel of $\Gamma$. 
These discrepancies can be bridged
via localization on {partial} flag
varieties ~\cite{Kashiwara,BMR1,BK}.

3) The theorem respects the natural $G$-actions and so identifies equivariant objects on each side. Thus for a subgroup
$K\subset G$, it provides an equivalence between Harish-Chandra $(\fU\fg\module_{[\lambda]}, K)$-modules
and $K$-equivariant  $\lambda$-twisted $\D$-modules.

4) One can naturally extend the theorem to generalized monodromy and infinitesimal character and thus obtain
an equivalence not for fixed  regular $\lambda \in \fh^*$ but over its formal neighborhood $\lambda^\wedge$ within $\fh^*$. Namely,  let $\cP_{\lambda^\wedge}=\cD_{\lambda^\wedge}(G/B)$ denote the dg category of weakly $H$-equivariant $\D$-modules on $G/N$ with monodromy in $\lambda^\wedge$. Similarly, let
  $\fU\fg\module_{[\lambda^\wedge]}$  denote the
dg category of $\fU\fg$-modules with infinitesimal character in the $W$-orbit
 $[\lambda^\wedge] $ or equivalently formal neighborhood $[\lambda]^\wedge$. Then  when $\lambda$ is regular,
localization and global sections  provide inverse equivalences
$$\xymatrix{
\Delta:\fU\fg\module_{[\lambda^\wedge]} \ar@{<->}[r]^-\sim & \cP_{\lambda^\wedge}:\Gamma
}
$$
 Alternatively,
one can equip the original statement of  Theorem~\ref{bb thm} with the symmetries that control the extension to the formal neighborhood. 
Namely, one can pass to (co)modules for the (co)monad of the adjunction given by restriction and extension of modules along the inclusion of $\lambda$
into  $\lambda^\wedge$.
\end{remark}

Despite its dramatic importance, which is hard to overestimate, there nevertheless remain several deficiencies of Theorem \ref{bb thm},  even after taking into account the above refinements. 

First, and foremost of our motivations, the theorem
applies to modules with {fixed}, or at most generalized,
infinitesimal character. This makes it difficult to apply to or even compare with questions
of harmonic analysis, which typically involve the geometry or topology
of families of representations (for example, the Plancherel formula and
Baum-Connes conjecture).  

Second, it is unnatural and thus often confusing that the parameters on the two sides of the theorem do not match: to
localize modules  with infinitesimal character $[\lambda]\in
\fh^*\git W$, we must choose a representative lift $\lambda\in \fh^*$. It is worth noting the impact of this choice:
the geometry of the corresponding $\D$-modules -- most basically, the
dimension of their support -- depends on this choice.

Third, the
theorem does not apply as stated to singular infinitesimal character
where the category of $\D$-modules is larger than the corresponding
category of modules. As mentioned above, one can bridge this discrepancy
via localization on {partial} flag
varieties.  But this seems to only increase the distance from a uniform statement over all infinitesimal characters.

In this paper, we present a natural refinement of the derived
Beilinson-Bernstein localization theorem which simultaneously corrects
these three drawbacks by invoking descent. A naive paraphrase of the main
result asserts: ``the category of $\fU\fg$-modules is  equivalent to the  Weyl group
invariants in the category of all monodromic $\D$-modules on $G/N$''.

In what immediately follows, we first
state a precise version of this result, then explain the paraphrase and a ``classical
limit'' for quasicoherent sheaves on the Grothendieck-Springer
resolution.

%%%

\subsection{Beilinson-Bernstein via Barr-Beck} 
A natural setting  for simultaneously localizing over
all infinitesimal characters is 
the dg category $\cP=\D_{\mon}(G/N)$ of all weakly
$H$-equivariant $\D$-modules on the basic affine space $G/N$
(see
for example \cite{Beilinson ICM}). The subscript ``$\mon$" stands for the term monodromic, which we will understand 
to be a synonym for weakly $H$-equivariant, and in particular not imply specified monodromy along the $H$-orbits. 
(The notation $\cP$ reflects 
the analogy with the universal principal series representation of a real group or
 the universal unramified
principal series representation of a $p$-adic group.)

The natural right $H$-action on $G/N$ induces an identification of $\fU\fh\simeq \cO(\fh^*)$ with central $H$-invariant differential operators on $G/N$. 
This equips $\cP$ with a linear structure over the base $\fh^*$ with fiber  at $\lambda\in \fh^*$ the previously encountered dg category $\cP_\lambda = \D_\lambda(G/B)$ of  $\lambda$-twisted $\D$-modules.
Similarly, restricting to the formal neighborhood $\lambda^\wedge$, we  recover 
the dg category
$\cP_{\lambda^\wedge}=\cD_{\lambda^\wedge}(G/B)$   of weakly $H$-equivariant $\D$-modules with monodromy in $\lambda^\wedge$.

The natural left $G$-action on $G/N$ induces an embedding of $\fU\fg$ in $H$-invariant differential operators on $G/N$. 
This gives rise to an adjunction
$$\xymatrix{
\Delta:\fU\fg\module\ar@{<->}[r]  & \cP: \Gamma}
$$ 
between the dg category of all $\fU\fg$-modules and $\cP$. Note that here $\Gamma$ denotes the $H$-invariants in the usual global sections functor. The functors intertwine the linear structure over $\fh^*\git W$ on the left and that over $\fh^*$ on the right.

An important observation is that the adjunction is
naturally {\em ambidextrous}: $\Delta$ is canonically
 the {right} adjoint of $\Gamma$ as well. This
is a reflection of the Calabi-Yau structure of  $\cP$,
 the key ingredient in the approach of \cite{BMR1,BMR2} to
establishing Beilinson-Bernstein equivalences (following an argument
of \cite{BKR} in the setting of the McKay correspondence, and extended
to localization for quantum symplectic resolutions in \cite{McN}).

We will refer to the composition 
$$\xymatrix{
\cW=\Delta\circ\Gamma} \in \End(\cP)
$$  as the {\em Weyl functor}. 
It naturally comes equipped with the structure
of a monad and a comonad, or in other words, an algebra
and coalgebra object in endofunctors
of $\cP$. In fact, one could organize all of the structure in what could rightfully be called a {\em Frobenius monad}.
We will use this term as evocative shorthand for the monad of an ambidextrous adjunction (and will not attempt 
to independently formalize it in the $\oo$-categorical setting; see Section~\ref{cat context} below for further discussion
and references to the discrete setting).

Another crucial feature of $\Delta$ is its conservativity: no
non-zero objects localize to zero (an easy consequence of the general localization
formalism). We find ourselves in the setting of the Barr-Beck theorem, in its $\oo$-categorical form due to Lurie.
Recall it states that given an adjunction $F:\cC\longleftrightarrow \cD:G$ between $\oo$-categories, if 
the right adjoint $G$ is conservative, and any $G$-split simplicial object of $\cD$ admits a colimit preserved by $G$, then
$G$ exhibits $\cD$ as monadic over $\cC$.
This allows us to describe $\fU\fg\module$ in terms of $\cP$ and the Weyl functor
$\cW$ with its natural structures.  
 Let us write $\cP_\cW$ and $\cP^\cW$ for the respective dg categories of $\cW$-modules and $\cW$-comodules
in $\cP$.

\begin{thm}[Barr-Beck version of localization] \label{monadic BB}
%
%\begin{enumerate}
%\item The Weyl functor $\cW=\Delta\circ\Gamma$ carries a canonical
%  structure of Frobenius monad on $\cP$.
 There are canonical equivalences
$$\cP_\cW\simeq \fU\fg\module \simeq \cP^\cW$$ between the dg categories of
  $\cW$-modules and $\cW$-comodules in $\cP$ and the category of
  $\fU\fg$-modules.
%\end{enumerate}
\end{thm}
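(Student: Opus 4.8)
The plan is to deduce the theorem directly from the $\infty$-categorical Barr--Beck--Lurie comonadicity and monadicity theorems applied to the adjunction $(\Delta, \Gamma)$, together with the ambidexterity already recorded above. First I would recall the precise hypotheses of Lurie's Barr--Beck theorem (\emph{Higher Algebra}, Theorem 4.7.3.5 and its dual): a functor is comonadic if it admits a right adjoint, is conservative, and preserves totalizations of cosimplicial objects that are split after applying the functor; dually, monadic if it admits a left adjoint, is conservative, and preserves geometric realizations of $\Gamma$-split simplicial objects. Since $\Delta$ is conservative (noted above as a formal consequence of the localization setup) and $\Gamma$ is its right adjoint, one direction would identify $\fU\fg\module$ with comodules over the comonad $\Gamma\circ\Delta$ on $\cP$; but the ambidexterity $\Delta \dashv \Gamma \dashv \Delta$ lets us instead take $\Delta$ as right adjoint to $\Gamma$, whence $\Delta$ being conservative and $\Gamma$ being its right adjoint gives the monadic identification $\fU\fg\module \simeq \cP_{\cW}$ with $\cW = \Delta\circ\Gamma$. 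Running the same argument with the roles of the two adjunctions swapped produces $\fU\fg\module \simeq \cP^{\cW}$, the comodules for $\cW$ regarded as a comonad. The fact that $\cW$ carries \emph{both} structures and that both identifications hold simultaneously is exactly the Frobenius-monad phenomenon alluded to in the text.

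The key steps, in order: (1) write down the bar/cobar resolution explicitly — every $\fU\fg$-module $M$ should be recovered as the geometric realization of the simplicial object $[n]\mapsto \Delta\circ(\Gamma\Delta)^{\circ n}\circ\Gamma(\Delta M) \to \cdots$, and dually as a totalization; (2) verify conservativity of $\Delta\colon \fU\fg\module \to \cP$, either by citing the general localization formalism or by observing that the unit $\id \to \Gamma\Delta$ of the $(\Delta,\Gamma)$ adjunction is a split injection on the center-monoidal unit, so no nonzero module can have vanishing localization; (3) verify the relevant (co)limit-preservation: here one wants that $\Gamma$ preserves geometric realizations of $\Gamma$-split simplicial diagrams, which for a functor \emph{with both adjoints} (hence preserving all colimits as well as all limits) is automatic — this is the structural payoff of ambidexterity and is where one invokes the Calabi--Yau/Frobenius structure of $\cP$; (4) assemble (1)--(3) into the two instances of Barr--Beck--Lurie and observe the resulting monad and comonad are both computed by $\cW=\Delta\circ\Gamma$, so the three categories $\cP_{\cW}$, $\fU\fg\module$, $\cP^{\cW}$ are canonically equivalent.

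The main obstacle, I expect, is not any one of the three Barr--Beck hypotheses in isolation but rather establishing the ambidexterity $\Delta \dashv \Gamma \dashv \Delta$ at the $\infty$-categorical level with enough naturality that the two adjunctions can be fed into Barr--Beck symmetrically. In the classical triangulated setting this is the content of the Calabi--Yau argument of \cite{BKR,BMR1,BMR2,McN}, where one identifies the functor $\Delta$ with a twist of the right adjoint of $\Gamma$ by the (trivial, in the relevant twisted sense) canonical bundle of $G/N$; upgrading this identification to a coherent equivalence of functors of $\infty$-categories, compatible with the linear structures over $\fh^*$ and $\fh^*\git W$, is the technical heart. Once ambidexterity is in hand the conservativity of $\Delta$ is formal, the colimit/limit preservation of $\Gamma$ is automatic (a functor with a left and a right adjoint preserves all small limits and colimits), and the two invocations of Barr--Beck--Lurie are essentially bookkeeping. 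I would therefore isolate the ambidexterity as a separate lemma, prove it by reduction to the Calabi--Yau structure of the category $\D_{\mon}(G/N)$, and then present Theorem~\ref{monadic BB} as a short corollary.
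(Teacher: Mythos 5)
Your proposal follows essentially the same route as the paper: Section 3 establishes the ambidextrous identification $\gamma^!\simeq\gamma^*$ by showing the right adjoint $\gamma_*$ is proper and intertwines the canonical self-dualities of $\fU\fg\perf$ and $\cD^c_\mon(\widetilde\cB)$ (the Calabi--Yau input you isolate as a separate lemma), proves conservativity of the localization functor, and then invokes Barr--Beck--Lurie in both monadic and comonadic form exactly as you describe, with the (co)limit preservation coming for free from the two-sided adjunction. The one caveat is that conservativity is not purely formal: the paper establishes it by computing $\gamma_*(\cD_{\widetilde\cB})\simeq\fU\fg\otimes_{\fZ\fg}\fU\fh$ and using the integration (Weyl-averaging) splitting of $\fU\fg\to\fU\fg\otimes_{\fZ\fg}\fU\fh$ tested against the compact generator, which is the precise form of the split-unit observation you only sketch.
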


\begin{remark}For fixed regular
$[\lambda]\in\fh^*\git W$, the arguments of \cite{BMR1} utilize the ambidextrous property and the indecomposability of $\D_\lambda(G/B)$ to
deduce the Beilinson-Bernstein equivalence.  For singular parameters or
 families of infinitesimal characters, the indecomposability fails, but we recover $\fU\fg\module$
 as a summand of $\cP$ in the form of an isotypic component $\fU\fg\module=\cP^\cW$.
\end{remark}

\begin{remark}[``Reverse" Beilinson-Bernstein and the nil-Hecke algebra]
As we explain in Section~\ref{nil section}, we can also run Barr-Beck in the opposite direction, accessing $\cP$ through the global sections functor. This is no longer conservative, so the result is a description of {\em globally generated} monodromic $\cD$-modules on $G/N$: 
we have equivalences
$$\cP^{glob}\simeq \wt{\fU\fg}\module$$
where $\wt{\fU\fg}=\fU\fg \otimes_{\fZ\fg} \fU\fh$ is the extended enveloping algebra, and
$$(\cP^{glob})^\BH\simeq \fU\fg\module$$
where $\BH$ is the nil-Hecke algebra, the $\fU\fh$-algebra generated by the Demazure divided-difference operators, which controls descent along the map $$\fz:\fh^\ast\to \fh^\ast//W$$ from the dual Cartan to its coarse quotient by the Weyl group.
Thus we see that the Weyl monad $\cW$ in Theorem~\ref{monadic BB} combines the action of the nil-Hecke algebra with projection along the kernel of the global sections functor. See also~\ref{Demazure intro} for a closely related appearance of the Demazure operators.
\end{remark}

%%%

\subsection{Beilinson-Bernstein via Hecke symmetry}
In order to exploit the formal assertion of Theorem~\ref{monadic BB}, we need to
identify the Weyl functor $\cW$ concretely in terms of the symmetries of $\cP$. 
By construction, the category $\cP$ carries two fundamental commuting actions.  

On the one hand,
the left $G$-action on $G/N$ naturally equips $\cP$ with the structure of 
 {\em de Rham
  $G$-category}.
 By  this, we mean an algebraic $G$-action that is   
 infinitesimally trivialized, or in other words, the induced action of the formal group of $G$ is trivialized.
This can be formalized by saying that $\cP$ is a 
module for the monoidal
dg category $\D(G)$ of $\D$-modules on $G$ under  convolution.

On the other hand, the Hecke category $\cH=
\D_{\bimon}(N\bs G/N)$ of bimonodromic $\D$-modules on $N\bs G/N$ naturally acts on $\cP$ on the right by intertwining functors. 
In the same way that $\cP$ is linear over the base $\fh^*$, the Hecke category $\cH$ is linear over the base $\fh^*\times\fh^*$,
and its monoidal structure is compatible with convolution of bimodules over  $\fh^*\times\fh^*$. 
%with fibers the previously encountered $\oo$-categories $\cP_\lambda = \D_\lambda(G/B)$ of  $\lambda$-twisted $\D$-modules.

One can view the Hecke category $\cH$ as the monodromic generalization 
 of the familiar finite Hecke category $\D(B\bs G/B)$, which in turn is a categorical
form of the finite Hecke algebra or Artin braid group.
More generally, one finds the Hecke category $\cH_{\lambda, \mu} = \D_{\lambda, \mu}(B\bs G/B)$ of $\lambda, \mu$-twisted $\D$-modules
as the fiber of $\cH$ at a point $(\lambda, \mu)\in \fh^*\times\fh^*$.
Similarly, restricting to the 
the formal neighborhood 
$\lambda^\wedge\times \mu^\wedge$,
one finds
the Hecke category $\cH_{\lambda^\wedge, \mu^\wedge} = \D_{\lambda^\wedge, \mu^\wedge}(B\bs G/B)$ 
of weakly $H\times H$-equivariant $\D$-modules with monodromy in $\lambda^\wedge\times \mu^\wedge$.

\begin{remark}
At first glance, it might look easier to work with strictly  $\lambda$-twisted $\D$-modules rather than generalized twisted $\D$-modules with monodromy in $\lambda^\wedge$. But since $\lambda^\wedge$ is flat over the base $\fh^*$ unlike the point $\lambda$ itself, tensor products and ultimately convolution patterns restrict to $\lambda^\wedge$ in a less intricate way. Alternatively, one can equip strictly $\lambda$-twisted $\D$-modules with the symmetries that control their extension to $\lambda^\wedge$.
Namely, one can pass to (co)modules for the (co)monad of the adjunction given by restriction and extension of modules along the inclusion of $\lambda$
into  $\lambda^\wedge$.
But keeping track of this extra Koszul dual structure can be less intuitive than simply working over $\lambda^\wedge$.
\end{remark}

\begin{remark} Here are some simple observations to help orient the reader.  

The restriction $\cH_{\lambda^\wedge, \mu^\wedge}$ vanishes unless $\mu = w\cdot (\lambda+ \lambda')$, for some Weyl group element $w\in W$ and integral weight $\lambda'\in \Lambda^*\subset \fh^*$.
The restrictions $\cH_{\lambda^\wedge, \mu^\wedge}$ and $\cH_{\lambda'^\wedge, \mu'^\wedge}$ are non-canonically equivalent if there are  Weyl group
elements $w, w' \in W$ such that $\lambda' = w\cdot \lambda, \mu' = w'\cdot\mu$. 
The restrictions $\cH_{\lambda^\wedge, \mu^\wedge}$ and $\cH_{\lambda'^\wedge, \mu'^\wedge}$ are canonically equivalent if the difference of parameters
is integral $\lambda-\lambda',\mu-\mu' \in\Lambda^*\subset \fh^*$. 

The monoidal structure of $\cH$ descends to compatible compositions 
$$
\xymatrix{
\cH_{\lambda^\wedge, \mu^\wedge} \otimes \cH_{\mu^\wedge, \nu^\wedge} \ar[r] & \cH_{\lambda^\wedge, \nu^\wedge}
}$$
Moreover, the restriction maps $\cH \to \cH_{\lambda^\wedge, \mu^\wedge}$ are compatible with the above restricted
compositions. In particular, the diagonal restriction map $\cH \to \cH_{\lambda^\wedge, \lambda^\wedge}$ is a monoidal map
of monoidal categories.
\end{remark}

\begin{remark}
A result of \cite{character}
asserts that for any fixed $\lambda \in\fh^*$, the diagonally $\lambda$-twisted Hecke categories $\cH_{\lambda, \lambda} = \D_{\lambda, \lambda}(B\bs G/B)$, $\cH_{\lambda^\wedge, \lambda^\wedge} = \D_{\lambda^\wedge, \lambda^\wedge}(B\bs G/B)$   are
both  categorified analogues of finite
dimensional semisimple Frobenius algebras: they are the values on a
point of extended oriented two-dimensional topological field
theories. More precisely, they are  two-dualizable Calabi-Yau algebra
objects in dg categories.
\end{remark}

Along the way, as a simple application of results of~\cite{character}, we will establish the following basic relationship between the above commuting symmetries.

\begin{thm}\label{Hecke Morita}
There is a monoidal equivalence $$\xymatrix{\cH\simeq \End_{\D(G)}(\cP)}$$
between the Hecke category $\cH$ and  $\D(G)$-linear endofunctors of $\cP$.
\end{thm}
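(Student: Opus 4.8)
The plan is to identify both sides as endomorphisms of $\cP$ as a module over the monoidal category $\D(G)$, using a "kernel" or "integral transform" description of $\D(G)$-linear functors together with the results of \cite{character} on the bimonodromic Hecke category. The starting point is the standard principle that, since $\cP = \D_{\mon}(G/N)$ is a $\D(G)$-module category generated (in the appropriate sense) by the structure sheaf-type object, $\D(G)$-linear endofunctors of $\cP$ should be computed by a relative tensor/convolution: concretely, one expects
\[
\End_{\D(G)}(\cP) \simeq \cP \otimes_{\D(G)} \cP^{\vee},
\]
where $\cP^\vee$ is the dual $\D(G)$-module category, and then to identify this relative tensor product with $\D$-modules on the fiber product $(G/N)\times_G (G/N) = N\bs G/N$, with the appropriate bimonodromic (i.e. weakly $H\times H$-equivariant) structure coming from the two commuting right $H$-actions. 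This would produce the equivalence $\cH \simeq \End_{\D(G)}(\cP)$ of plain dg categories, after which one checks it intertwines convolution of Hecke kernels with composition of functors, yielding the monoidal refinement.

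First I would make precise the sense in which $\cP$ is dualizable and self-dual as a $\D(G)$-module category. The smoothness hypothesis — that the action is infinitesimally trivialized — is exactly what is needed here: $G/N$ is a smooth variety with a $G$-action, so $\D_{\mon}(G/N)$ is dualizable over $\D(G)$, and the (co)evaluation maps are given by push-pull along the correspondences relating $G/N$, $G$, and the point; the weak $H$-equivariance is carried along formally. This is where I would lean most heavily on \cite{character}: the statement there that $\cH_{\lambda^\wedge,\lambda^\wedge}$ and its untwisted analogue are two-dualizable Calabi-Yau algebra objects is precisely a packaging of the self-duality and Frobenius structure, and I expect the same circle of ideas (base-changed over $\fh^*$ rather than fiberwise) to supply the ambidextrous/self-dual structure on $\cP$ over $\D(G)$. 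Second, I would invoke the general Morita-theoretic statement — essentially a $\D(G)$-linear, $\oo$-categorical form of Barr-Beck or of the "functors are kernels" theorem — to get $\End_{\D(G)}(\cP) \simeq \cP\otimes_{\D(G)}\cP^\vee$, and then identify the right-hand side with $\D_{\bimon}(N\bs G/N) = \cH$ by a base-change/descent computation for $\D$-modules: the relative tensor product of module categories of $\D$-modules over a group is computed by $\D$-modules on the fiber product of the spaces, and $(G/N)\times_G(G/N)\cong N\bs G/N$.

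Third, I would upgrade to a monoidal equivalence. The composition on $\End_{\D(G)}(\cP)$ corresponds under the kernel description to convolution of kernels along the middle copy of $G/N$, i.e. to the composition $\cH_{\lambda^\wedge,\mu^\wedge}\otimes\cH_{\mu^\wedge,\nu^\wedge}\to\cH_{\lambda^\wedge,\nu^\wedge}$ already recorded in the remarks above, assembled over $\fh^*\times\fh^*\times\fh^*$; checking that the equivalence of the previous step intertwines these is a matter of unwinding the correspondences, and the monoidal unit matches the regular bimodule (the "big projective"-type object supported on the closed orbit, twisted diagonally). **The hard part will be** the careful treatment of the monodromic/weakly-equivariant structure and its interaction with dualizability: unlike the honest $\D(G/B)$ situation, here one is working over the base $\fh^*$, the objects are only weakly $H$-equivariant, and the duality/trace maps must be constructed compatibly with the linear structure over $\fh^*$ (resp. $\fh^*\times\fh^*$ on the Hecke side) — in particular verifying that the relative tensor product over $\D(G)$ really does compute $\D$-modules on $N\bs G/N$ with the full bimonodromic structure, rather than some completion or partial-equivariance variant, is the step I expect to require the most care, and it is exactly the point at which the inputs from \cite{character} do the real work.
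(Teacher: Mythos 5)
Your outline is correct in substance, but it follows a genuinely different route from the paper. The paper's proof has two steps: first it quotes from \cite{character} the \emph{absolute} statement that integral transforms give a monoidal equivalence $\D_{\bimon}(\widetilde\cB\times\widetilde\cB)\simeq \End(\cD_\mon(\widetilde\cB))$, with no $G$-linearity in sight; then it identifies $\D(G)$-linear endofunctors with the (strong) $G$-invariants of $\End(\cD_\mon(\widetilde\cB))$, computes those invariants by descent along $pt\to BG$ as comodules for the coalgebra $\cO_G\in\D(G)$, and observes that $\cO_G$-comodules in $\D_{\bimon}(\widetilde\cB\times\widetilde\cB)$ are exactly $G$-equivariant bimonodromic kernels, i.e.\ objects of $\D_{\bimon}(N\bs G/N)$. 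You instead propose the dual, Morita-theoretic route: establish dualizability (indeed self-duality, via Verdier duality and rigidity of $\D(G)$) of $\cP$ as a $\D(G)$-module, write $\End_{\D(G)}(\cP)\simeq\cP\otimes_{\D(G)}\cP^\vee$, and compute the relative tensor product as $\D$-modules on $(G/N)\times_G(G/N)\simeq N\bs G/N$. Both work, but the load is distributed differently: the paper's invariants argument is soft once the absolute kernel theorem is granted, whereas your route hinges on the statement that the bar construction $\D_\mon(G/N)\otimes_{\D(G)}\D_\mon(G/N)^\vee$ really computes bimonodromic $\D$-modules on the quotient of the product by the diagonal $G$ --- a descent/1-affineness-type assertion for $\D(G)$-module categories that is \emph{not} covered by the quasicoherent integral-transform results of \cite{BFN} and needs its own justification (it is available in the literature, e.g.\ via the arguments of \cite{character} or Gaitsgory's sheaves-of-categories formalism, but you should cite or prove it rather than treat it as formal); you correctly flag this as the hard point. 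Conversely, your route gives for free the identification $\End_{\D(G)}(\cP)\simeq\cP\otimes_{\D(G)}\cP^\vee$, which the paper never needs. One small correction: the monoidal unit of $\cH$ is not a ``big projective''-type object; it is the regular monodromic kernel on the closed orbit $N\bs B/N\simeq H$ (the weakly biequivariant sheaf of differential operators on $H$ pushed forward there), matching the diagonal kernel $\Delta_{\widetilde\fg*}\cO_{\widetilde\fg}$ in the classical limit --- the big projective/tilting object is what appears in the \emph{singular fiber} $\cW_{[0^\wedge]}$ of the Weyl sheaf, not as the unit.
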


\begin{remark}
Specializing over subsets of the base $\fh^*$ immediately provides analogous assertions for monodromic $\D$-modules with prescribed monodromies.
\end{remark}

Observe that the adjoint $G$-action on $\fg$ naturally equips $\fU\fg\module$ with the structure of de Rham $G$-category.
As was emphasized in \cite{BD} (see also
\cite{FG}), the localization and global sections functors commute with
the corresponding de Rham $G$-actions. 
Therefore, or as can be verified independently, the Weyl functor $\cW$ must be represented by an object of the Hecke category $\cH$ which we will also denote by $\cW$. It turns out to have a simple geometric description.

\begin{lemma}\label{universal Weyl sheaf} The {\em universal Weyl sheaf}  $\cW\in \cH$, the Hecke kernel for the Weyl functor, is the 
sheaf of differential operators on $N\bs G/N$ with its canonical weakly
$H$-biequivariant structure.
\end{lemma}

\begin{remark}
Differential operators on the stack $N\bs G/N$ can be calculated by starting with differential operators on $G/N$ and then performing quantum Hamiltonian reduction for the left $N$-action.  Thus one  quotients differential operators on $G/N$ by the vector fields generating the left  $N$-action and then takes $N$-invariants. 
\end{remark}

\begin{thm}[Hecke version of localization] \label{Hecke BB}

The universal Weyl sheaf $\cW$ is naturally an algebra and coalgebra in $\cH$.
There are canonical equivalences
$$\cP_\cW\simeq \fU\fg\module \simeq \cP^\cW$$ between the categories of
  $\cW$-modules and $\cW$-comodules in $\cP$ and the category of
  $\fU\fg$-modules.
\end{thm}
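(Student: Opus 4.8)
The plan is to deduce Theorem~\ref{Hecke BB} by combining the abstract monadic statement of Theorem~\ref{monadic BB} with the Morita-type identification of Theorem~\ref{Hecke Morita}. The only genuinely new content is the assertion that the Weyl functor $\cW = \Delta\circ\Gamma \in \End(\cP)$, which Theorem~\ref{monadic BB} exhibits as a Frobenius monad, is \emph{represented} by the universal Weyl sheaf $\cW \in \cH$ acting by convolution, compatibly with all its algebra/coalgebra structure. Everything else is then a formal transport of structure.

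First I would recall why $\cW$ lands in the image of $\cH$. The category $\fU\fg\module$ is a smooth $G$-category via the adjoint action, $\cP$ is a smooth $G$-category via left translation on $G/N$, and (as in \cite{BD}, \cite{FG}) both $\Delta$ and $\Gamma$ are morphisms of $\D(G)$-module categories. Hence $\cW = \Delta\circ\Gamma$ is a $\D(G)$-linear endofunctor of $\cP$, so by Theorem~\ref{Hecke Morita} it is represented by a canonical object of $\cH$; one must then identify that object with the sheaf of differential operators $\cD_{N\bs G/N}$ carrying its tautological weakly $H$-biequivariant structure. I would do this by a direct computation: $\Gamma$ is $H$-invariant global sections on $G/N$, i.e.\ pushforward along $G/N \to \mathrm{pt}$ followed by $H$-invariants, $\Delta$ is the left adjoint (pullback of the $\fU\fg$-module structure), and the composite convolution kernel on $N\bs G/N$ is computed by base change along the correspondence $G/N \leftarrow G \to G/N$, producing exactly $\cD_{N\bs G/N}$. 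The ambidexterity (Frobenius) structure on $\cW$ as a functor then matches the self-duality of this sheaf kernel: $\cD_{N\bs G/N}$ is its own Verdier/Serre dual up to the Calabi--Yau twist coming from the Calabi--Yau structure of $\cP$ already invoked for Theorem~\ref{monadic BB}, so the algebra structure (unit $= $ skyscraper $\delta$ along the diagonal mapping to $\cD$, multiplication $= $ the composition $\cD \star \cD \to \cD$) and the Koszul-dual coalgebra structure are both inherited from the abstract monad/comonad under the equivalence $\End_{\D(G)}(\cP)\simeq\cH$.

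With $\cW\in\cH$ identified as an algebra and coalgebra object matching the abstract (co)monad on $\End(\cP)$, the equivalences $\cP_\cW \simeq \fU\fg\module \simeq \cP^\cW$ follow immediately: under $\End_{\D(G)}(\cP)\simeq\cH$, modules in $\cP$ over the monad $\Delta\circ\Gamma$ are the same as modules in $\cP$ over the algebra $\cD_{N\bs G/N}$ acting by convolution (this is just the statement that the action of $\cH$ on $\cP$ realizes $\cH$-module objects as $\End$-module objects), and likewise for comodules; then Theorem~\ref{monadic BB} supplies the identification with $\fU\fg\module$. I would also note the input from \cite{character} here: the fact that the relevant diagonal Hecke categories are two-dualizable Calabi--Yau algebras is what guarantees the convolution action is dualizable enough for the monad/algebra dictionary and the ambidexterity to be compatible, which is why this is flagged as "a simple application of results of \cite{character}."

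The main obstacle I expect is the precise identification in the second step --- matching the abstract Frobenius monad structure on $\cW\in\End(\cP)$ with the concrete Frobenius structure on $\cD_{N\bs G/N}\in\cH$, including all the coherence data (not just the underlying object, but the unit, multiplication, counit, comultiplication, and their compatibilities) in the $\oo$-categorical setting. Identifying the underlying kernel is a routine base-change computation, but promoting it to an equivalence of algebra-and-coalgebra objects requires care: one must check that the unit map $\id_\cP \to \cW$ coming from the adjunction counit corresponds to the map $\delta_{\Delta} \to \cD_{N\bs G/N}$ of $\D$-modules on $N\bs G/N$, and that the ambidexterity datum (the second adjunction $\Delta \dashv \Gamma$, reflecting the Calabi--Yau structure of $\cP$) corresponds to the self-duality of $\cD_{N\bs G/N}$ under convolution. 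The cleanest way to handle this is probably to observe that all of this structure is \emph{canonically determined}: the monoidal equivalence of Theorem~\ref{Hecke Morita} transports the (co)algebra structure of any object uniquely, so once the underlying objects agree, the Frobenius structures agree automatically, and one only needs to verify that the distinguished structures (from the adjunction, resp.\ from the tautological $\cD$-module) are carried to one another --- which reduces to a unit-and-generator check rather than a full coherence verification.
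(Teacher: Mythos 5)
Your proposal follows essentially the same route as the paper: use the $\D(G)$-linearity of $\Delta$ and $\Gamma$ together with the Morita equivalence $\cH\simeq\End_{\D(G)}(\cP)$ to represent the Weyl functor by a kernel, identify that kernel with the sheaf $\cD_{N\bs G/N}$ (the paper does this by exhibiting $\gamma^*\gamma_*$ as the $\fU\fg$-coinvariants of the functor with kernel $\widetilde\cD_\cB\boxtimes\widetilde\cD_\cB$, i.e.\ by quantum Hamiltonian reduction, using continuity to commute coinvariants with the integral transform), transport the algebra and coalgebra structures through the monoidal equivalence, and conclude by the Barr--Beck statement of Theorem~\ref{monadic BB}. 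Your closing observation is also the paper's stance---no pre-existing Frobenius structure on $\cD_{N\bs G/N}$ needs to be matched, since the (co)algebra structure is defined by transport of the (co)monad---and the only slight misreading is attributing the input of \cite{character} to two-dualizability of the diagonal Hecke categories, whereas what is actually used is the kernel-equals-functor theorem for $\widetilde\cB\times\widetilde\cB$ together with descent along $pt\to BG$.
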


 Altogether, one can view the universal Weyl sheaf $\cW$ as  a categorical idempotent, providing a lift of the idempotent in  the groupoid algebra governing descent along the 
quotient map $\fh^* \to \fh^*\git W$.
It is clarifying to reverse our momentum and regard  it as a family of idempotents
 as we vary the infinitesimal character.
We include here a brief  discussion of the two most extreme cases.
% in particular explaining why we  use the term universal Weyl sheaf.
 
 \begin{remark}
 It is  worth pointing out first of all that while $\cW \in\cH$ is the sheaf of differential operators on $N\bs G/N$ so ``large",
 its fibers $\cW_{\lambda, \mu} \in \cH_{\lambda, \mu}$ are  regular holonomic twisted $\D$-modules on $B\bs G/B$ so ``small". This is a twisted instance of the
 general fact
 that on a quotient stack $H\bs X$ such that $X$ is smooth and $H$ is affine algebraic acting with finitely many  orbits in $X$,
 the sheaf of differential 
 operators on $H\bs X$ is regular holonomic. In fact, any coherent $\D$-module on $H\bs X$  is regular holonomic.
\end{remark}

At one extreme, when $\lambda \in \fh^*$ is regular,
 consider the direct sum of the potential targets for  localization 
$$
\xymatrix{
\cP_{[\lambda^\wedge]}=\bigoplus_{w\in W} \cP_{w\cdot\lambda^\wedge}
}$$
and the corresponding Hecke category
$$
\xymatrix{
\cH_{[\lambda^\wedge]} =\bigoplus_{w, w'\in W} \cH_{w\cdot\lambda^\wedge, w'\cdot\lambda^\wedge}
%\simeq\bigoplus_{w, w'\in W} \cQ(\lambda^\wedge)
}
$$
The classical Beilinson-Bernstein theorem asserts that $\Gamma$ is already a derived equivalence (and in fact t-exact up to a shift) when restricted to any summand. Moreover the classical action of principal series intertwining functors intertwine the different localizations~\cite{Beilinson ICM}. Thus we can understand the statement of Theorem~\ref{Hecke BB} as saying we perform all $|W|$ localizations and keep track of the Weyl group action. Let us spell this out.

First, for $\lambda \in \fh^*$ generic (with respect to the notion of integrality coming from coroot functionals), 
we have that
$$
\xymatrix{
\cH_{w\cdot \lambda^\wedge, w'\cdot\lambda^\wedge} \simeq \cQ(\lambda^\wedge),
&
\mbox{for all $w, w'\in W$,}
}
$$
where $\cQ(\lambda^\wedge)$ denotes the dg category of quasicoherent sheaves on the formal neighborhood of $\lambda \in\fh^*$ or equivalently that of $[\lambda]\in \fh^*\git W$. (Since $\lambda$ is generic, only one Bruhat double-coset in $G$ supports any  bimonodromic modules with parameters in $w\cdot \lambda^\wedge, w'\cdot\lambda^\wedge$,
and all such modules are only constrained by the fact that the parameters must be related by $w'w^{-1} $.)
The monoidal structure of  $\cH_{[\lambda^\wedge]}$  is simply that of the groupoid algebra of the regular Weyl groupoid 
$$\xymatrix{
W \times W \ar@<0.7ex>[r] \ar@<-0.7ex>[r] & W
}
$$ 
with scalars in the constant tensor category
$\cQ(\lambda^\wedge)$. 
%Here we have invoked the canonical equivalences $\qc(\lambda^\wedge) \simeq \qc(w\cdot \lambda^\wedge)$ given by the action of $w\in W$.
Furthermore, the restriction $\cW_{[\lambda^\wedge]}\in \cH_{[\lambda^\wedge]}$ of the universal Weyl sheaf corresponds to the direct sum of the structure sheaf $\cO_{\lambda^\wedge}\in \cQ(\lambda^\wedge)$  in each factor. 
Thus it provides the categorical analogue of the constant
idempotent in the groupoid algebra of $W$. As expected, its modules and comodules in $\cP_{[\lambda^\wedge]}$ are equivalent to a single copy of $\cP_{\lambda^\wedge}$.

Leaving behind the generic case,
for $\lambda \in \fh^*$ regular integral, the 
 Hecke category
$
\cH_{[\lambda^\wedge]} 
$
now contains all of 
the combinatorics of Kazhdan-Lusztig theory. A common approach to capture this structure is to focus on the exceptional collections of standard or costandard objects associated to Schubert cells. Their convolutions provide dual realizations of the  groupoid algebra of the natural Artin braid group action on the Weyl group
$$\xymatrix{
B_W \times W \ar@<0.7ex>[r] \ar@<-0.7ex>[r] & W
}
$$ 
with scalars in the constant tensor category
$\cQ(\lambda^\wedge)$. 
In fact, one can embed the 
regular Weyl groupoid 
%$$\xymatrix{
%W \times W \ar@<0.7ex>[r] \ar@<-0.7ex>[r] & W
%}
%$$ 
inside the above braid groupoid by choosing  appropriate lifts of minimal standard or costandard objects depending on
whether multiplication %by $w'\in W$ 
increases or decreases length. % of $w\in W$.
With this observation in hand, the restriction $\cW_{[\lambda^\wedge]}\in \cH_{[\lambda^\wedge]}$  corresponds
to a direct sum of specified standard and costandard objects giving the constant idempotent of $W$.
 Thus as expected, its modules and comodules in $\cP_{[\lambda^\wedge]}$ are equivalent to $\fU\fg\module_{[\lambda^\wedge]} $ in the form of a single copy of $\cP_{\lambda^\wedge}$.

\begin{remark}[Borel-Weil-Bott]
The above picture for  $\lambda \in \fh^*$ regular integral gives a nice framework for understanding the Borel-Weil-Bott theorem. Let us focus on the objects of $\cP_{[\lambda^\wedge]}$ given by the shifted line bundles $\cO(w\cdot \lambda)[\ell(w)] \to G/B$, where $\ell(w)$ denotes the length of $w\in W$. Their global sections are  the irreducible $G$-representation $V_{[\lambda]}$ placed in degree zero thanks to their original shifts by length. The summands of the idempotent
$\cW_{[\lambda^\wedge]}\in \cH_{[\lambda^\wedge]}$ intertwine the shifted line bundles, increasing or decreasing degree depending on what happens to length.

Equivalently, we can apply the $G$-equivariant description of Theorem~\ref{Hecke BB} to $G$-integrable representations $(\fU\fg\module)^G\simeq Rep(G)$, obtaining an equivalence with $\cW$-comodules in $(\cP)^G\simeq \cD(pt/N)^H\simeq Rep(H)$.
The comonad $\cW$ is thereby identified with the Borel-Weil-Bott comonad on $Rep(H)$, given by the standard parabolic induction/restriction adjunction.
\end{remark}

Finally, as $\lambda \in \fh^*$ specializes to become singular, the geometry becomes more interesting,
 ultimately reflecting the nontrivial scheme structure of the quotient map $\fh^* \to \fh^*\git W$
 along its ramification locus .
In the most singular case $\lambda=0 \in \fh^*$, 
recall that the target of localization $\cP_{[0^\wedge]} \simeq \cP_{0^\wedge}$ is dramatically different from
the singular category $\fU\fg\module_{[0^\wedge]}$.
The Hecke category $\cH_{[0^\wedge]} = \cH_{0^\wedge, 0^\wedge}$ controlling this difference can be viewed in dual standard and costandard ways as a categorical analogue of the Artin braid group algebra
 (rather than groupoid algebra as  above).
 
 To concretely discuss the restriction $\cW_{[0^\wedge]} \in \cH_{[0^\wedge]}$, let us focus on its convolution against the object
 $ \cO(-\rho)\in \cP_{[0^\wedge]}$ given by the canonical bundle of $G/B$. One can calculate that 
 $$
 \xymatrix{
 \cT = \cW_{[0^\wedge]} * \cO(-\rho)
 }
 $$
 is the tilting sheaf given by the projective cover of the skyscraper at the closed Schubert cell. It is well known
 that  $\cT$  governs the singular category (as in the work of Soergel~\cite{garben}); for example, the kernel of the global
 sections functor to $\fU\fg\module_{[0^\wedge]}$ is its right-orthogonal.

%%%

\subsection{Comparison with $K$-theory}
We would like to interpret Theorem \ref{Hecke BB} as a refinement of results in $K$-theory which are
 closer in form to the naive paraphrase:
``the category of $\fU\fg$-modules is  equivalent to the  Weyl group
invariants in the category of all monodromic $\D$-modules on $G/N$''.

First, let us recall $K$-theory versions of the Weyl character formula and Borel-Weil-Bott
theorem from~\cite{BH} (where the context is equivariant $KK$-theory 
and the results are closely related
to the Baum-Connes conjecture for Lie groups).  To proceed in this setting, let $G_c\subset G$ be a maximal compact subgroup,
and $T_c\subset G_c$ a maximal torus, so that we have $G/B \simeq G_c/T_c$.
 Natural morphisms relate the
representation ring of $G_c$ and the equivariant
$K$-theory of the flag manifold 
$$ \xymatrix{
\Delta: K_{G_c}^*(pt) \ar@{<->}[r] & K_{G_c}^*(G_c/T_c) : \Gamma}
$$
where $\Gamma$ is the equivariant index (Borel-Weil-Bott construction)
and $\Delta$ is given by pullback followed by multiplication by the
virtual bundle $\Omega^\cdot$. Note that for a representation $V\in
K_{G_c}^*(pt)$, the virtual bundle $\Delta(V)$ can be identified with
the complex that fiberwise computes $\fn$-homology (where $\fn$ is the unipotent radical
of the  stabilizer of a point in $G/B$). In other
words,  the virtual bundle $\Delta(V)$ is the $K$-theory image of the Beilinson-Bernstein
localization of $V$.

The Weyl group $W$ acts (non-holomorphically)  from the right on
$G_c/T_c$, and the main theorem of ~\cite{BH} calculates (in the context of equivariant $KK$-theory) that $\Gamma\circ \Delta=|W|\on{Id}$,
and when restricted to the $W$-invariants,
$\Delta\circ \Gamma = |W| \on{Id}.$ Identifying the
virtual character of $\Omega^\cdot$ with the Weyl denominator, one
recovers the Weyl character formula. In fact,  the main theorem of ~\cite{BH} calculates that 
 $\Delta\circ\Gamma$ is given by the standard idempotent in
the group algebra of $W$ (the projector to the trivial
representation), realized as a sum of standard intertwining operators.
Note that the group algebra $\BC W$ is a Frobenius algebra, and so $W$-invariants and coinvariants are both identified with the summand given by the
image of the standard idempotent.

In our present categorical setting, the action of the Weyl group  on
 equivariant $K$-theory  is replaced by the action of the Hecke category
$\cH=\D_{\bimon}(N\bs G/N)$ on the category $\cP = \D_{\mon}(G/N)$. For fixed integral $\lambda\in \fh^*$, 
the corresponding Hecke category $\cH_{\lambda, \lambda}=\D_{\lambda, \lambda}(\BGB)$ of
Kazhdan-Lusztig theory has Grothendieck group the group algebra $\BC W$. But in fact  standard bases
 of $\cH_{\lambda, \lambda}$ provide
actions on categories not of the Weyl group $W$ but of the corresponding Artin braid group. 
The Frobenius monad $\cW$ is the categorified analogue of the standard
idempotent, with $\cW$-modules playing the role of $W$-invariants and
$\cW$-comodules that of $W$-coinvariants.

%%%

\subsection{Classical limit}
The preceding description of Beilinson-Bernstein localization has a natural classical
analogue for quasicoherent sheaves on the Springer resolution. Its
interpretation as a form of proper descent becomes evident in this setting.
We will proceed in the context of derived algebraic geometry, and in particular that of perfect stacks as introduced in~\cite{BFN}. 
In particular, let $\pi:X\to Y$ denote any morphism of perfect stacks, and $\XYX$ the corresponding derived fiber product. 

First, consider the symmetric monoidal dg category $\cQ(Y)$
of quasicoherent sheaves equipped with tensor product, and its natural module dg category $\cQ(X)$ under the pullback $\pi^*$. The dg category
$\cQ(\XYX)$  is monoidal with respect to convolution,  acts on $\cQ(X)$ by endofuntors
with a natural $\cQ(Y)$-linear structure,
thus leading to a monoidal equivalence 
$$\xymatrix{
\cQ(\XYX)\ar[r]^-\sim &
\on{End}_{\cQ(Y)}(\cQ(X)).
}$$ 

The adjunction $(\pi^*,\pi_*)$ on quasicoherent sheaves associated to $\pi:X\to Y$ defines a
comonad $T^\vee=\pi^*\pi_*$ acting on $\cQ(X)$, and it is easy to see that
$T^\vee$ is represented by the coalgebra object 
$$\cA=\cO_{\XYX}\in
\cQ(\XYX).$$ 
Observe that $\cA$ is simply the groupoid coalgebra (functions on the
groupoid with convolution coproduct) for the descent groupoid $\XYX$
acting on $X$.
%, and an $\cA$-comodule structure simply enforces invariance along the fibers of $\pi$.  
When
$\pi$ is faithfully flat, then descent holds by \cite[7]{dag11},  providing an equivalence $$\cQ(Y)\simeq \cQ(X)^{\cA}.$$
Such flatness will not hold in our setting, but we will know that $\pi^*$ is conservative and cocontinuous and thus
descent holds by the Barr-Beck-Lurie theorem.

Next, let us assume that $\pi$ is proper and surjective on
field points.
Consider the  symmetric monoidal dg category $\cQ^!(Y)$
of ind-coherent sheaves equipped with $!$-tensor product, and its natural module dg category $\cQ^!(X)$ under the pullback $\pi^!$. 
The dg category
$\cQ^!(\XYX)$  is monoidal with respect to convolution,  acts on $\cQ^!(X)$ by endofuntors
with a natural $\cQ^!(Y)$-linear structure,
thus leading to a monoidal functor which is typically not an equivalence 
$$\xymatrix{
\cQ^!(\XYX)\ar[r] &
\on{End}_{\cQ^!(Y)}(\cQ^!(X)).
}
$$

The adjunction $(\pi_*,\pi^!)$ on ind-coherent sheaves associated to $\pi:X\to Y$ defines a
monad $T=\pi^!\pi_*$ acting on $\cQ^!(X)$, and
it is easy to see that  $T$ is represented by an algebra object
$$
\xymatrix{\cA^!=\omega_{\XYX/X}\in \cQ^!(\XYX)}.
$$ 
Observe that $\cA^!$ is the groupoid algebra (relative volume forms on the
groupoid with convolution product) for the descent groupoid $\XYX$
acting on $X$.
%, and an $\cA$-module structure simply realizes coinvariance along the fibers of $\pi$.  
The proper descent theorem of \cite[Proposition A.2.8]{preygel} and \cite[7.2.2]{indcoh} provides an equivalence
$$\cQ^!(Y)\simeq \cQ^!(X)_{\cA^!}.$$

Finally, let us now assume that $X$ and $Y$ are both smooth, so that we have
canonical identifications $\cQ(X)\simeq \cQ^!(X)$ and $\cQ(Y)\simeq
\cQ^!(Y)$. Moreover, let us assume that $\pi$ is proper, surjective on field points, and {\em crepant}, or in other words, Calabi-Yau
of dimension zero in that we are given a trivialization of its
relative dualizing sheaf and thus an identification $\pi^*\simeq
\pi^!$. In particular, this implies that $\pi^* \simeq \pi^!$ are simultaneously continuous and cocontinuous, as well as conservative. Then $T \simeq T^\vee$ is a Frobenius monad, the
groupoid algebra $\omega_{\XYX/X} \simeq \cO_{\XYX}$ is a Frobenius algebra object, and there are
 equivalences
$$\cQ(X)^T\simeq \cQ(Y)\simeq \cQ(X)_T.$$

The prime example for these restrictive hypotheses is the
Grothendieck-Springer simultaneous resolution $\pi:\tilde\fg\to
\fg$ with descent groupoid the Grothendieck-Steinberg variety. The resulting
descent picture is precisely the classical limit of
Beilinson-Bernstein localization. We develop the details of this  in
Section \ref{classical}, in particular its various
specializations over different regions in the
adjoint quotient.

\subsection{Demazure descent}\label{Demazure intro}
In the Appendix, Section~\ref{Demazure section}, we present a different picture of our results as a categorical form of the action of Demazure operators, inspired by~\cite{AK1,AK2}. Namely we show how the language of ind-coherent sheaves on inf-schemes~\cite{GR} allows a simple uniform description of the monads controlling both the classical Grothendieck-Springer descent and the quantum Beilinson-Bernstein localization. They both describe arise as images of the Demazure monad $\fd$, the algebra in the Demazure Hecke category $\DD=(\cQ^!(\BGB),\ast)$ whose action on a $\DD$-module projects onto $\DD$-invariants. As a result we can interpret both the classical and quantum equivalences as a categorical form of taking invariants for the Demazure divided-difference operators.

\subsection{Categorical context} \label{cat context}

We will work throughout in the language of derived algebraic geometry
following \cite{topos,HA}; we refer the reader to \cite{BFN,DG}
for some gentle discussion of this context and its basic tools.
We will work throughout over the complex numbers $\BC$.

The words ``category'' and ``dg category" will stand for either a
$\BC$-linear pre-triangulated dg category or a $\BC$-linear stable
$\oo$-category, and we refer to \cite{DG} for a general homotopical treatment of dg categories and~\cite{cohn} for an explicit comparison of the homotopy theories of dg categories and stable $\infty$-categories.
Such categories fit into two related contexts: 1) $\DG_\BC$ the
symmetric monoidal $\oo$-category of stable
presentable $\BC$-linear dg categories with morphisms colimit preserving 
functors, and 2) $\dg_\BC$  the
symmetric monoidal $\oo$-category of small stable idempotent-complete $\BC$-linear
dg-categories with morphisms exact functors. We say that a functor
between dg categories is {\em continuous} if
it preserves coproducts,
 {\em exact} if
it preserves zero objects and finite colimits, and
 {\em compact} if it preserves compact objects.

Taking ind-objects defines a faithful symmetric monoidal functor
$\Ind:\dg_\BC\to \DG_\BC$. It admits a left inverse on the subcategory of compact functors given by passing to compact objects. 
Any category $\cC \in
\dg_\BC$ is dualizable with dual the opposite category $\cC^{op} \in \dg_\BC$. Thus any category
 $\Ind\cC\in\DG_\BC$ is  dualizable with dual   the restricted opposite
category $(\Ind \cC)^\vee=\Ind (\cC^{op}) \in \DG_\BC$.

We will make heavy use of the theory of adjunctions, monads and comonads, and the Barr-Beck-Lurie theorem \cite{HA}. 
Given a monad
$T$ or comonad $T^\vee$  on a category $\cC$, we denote by 
$$\xymatrix{ \cC_T = \on{Mod}_{T}(\cC)
&
\cC^{T^\vee} = \on{Comod}_{T^\vee}(\cC)
}$$ 
the respective category of module objects
or comodule objects. 

\begin{workingdefn}[Frobenius monads] \label{frob monad}
The adjunctions appearing in this paper are {\em ambidextrous}: 
the left adjoint is canonically the right adjoint of its right adjoint  and vice versa.
Thus their compositions provide endofunctors with compatible  monadic and
 comonadic structures. We refer to  an endofunctor arising in this way as
 a {\em Frobenius monad}, though we do not independently formalize this notion in the
 $\oo$-categorical setting (see \cite{Street,Lauda} for
the notion in the discrete setting). 
A natural context for considering Frobenius monads
 is the cobordism
hypothesis with singularities \cite{jacob TFT}, where the notion of
ambidextrous adjunction captures an {oriented}
domain wall between topological field theories (as explained
pictorially by \cite{Lauda} in the discrete setting).\end{workingdefn}

\subsection{Acknowledgements} 
We would like to thank Jonathan Block and Nigel Higson for many
inspiring conversations on representations of Lie groups, and in
particular for asking what form Beilinson-Bernstein localization
should take without specified infinitesimal character. Our collaboration
with Block and Higson was funded by the SQuaRE ``The Baum-Connes
Conjecture and Geometric Representation Theory'' at the American
Institute of Mathematics, and we are indebted to AIM for its support
and hospitality. We would also like to thank Tom Nevins for helpful
discussions of localization and the paper \cite{McN}.

DBZ is partially supported by NSF grant DMS-1103525.  DN is partially
supported by NSF grant DMS-0600909.

%%%%%%%%%%%%%%%%%%%%%%%%%%%%%%%%%%%%%%%%%%%
%%%%%%%%%%%%%%%%%%%%%%%%%%%%%%%%%%%%%%%%%%%

%%%%%%%%%%%%%%%%%%%%%%%%%%%%%%%%%%%%%%%%%%%
%%%%%%%%%%%%%%%%%%%%%%%%%%%%%%%%%%%%%%%%%%%

\section{Grothendieck-Springer resolution} \label{classical}

\subsection{Recollections}
We recall here the construction of the Grothendieck-Springer
resolution of a reductive Lie algebra and the Steinberg variety.

Let $G$ be a complex reductive group. For a Borel subgroup $B\subset
G$, let $N\subset B$ denote its unipotent radical, and $H = B/N$ the
universal Cartan torus. Denote by $\fg$, $\fb$, $\fn$, and $\fh$ the
respective Lie algebras.  Let $W$ denote the Weyl group of $\fg$, and $\fc
= \fh\git W$ the affine quotient. Fix a $G$-invariant inner product on
$\fg$ to obtain an identification $\fg^*\simeq \fg$.

%% Let $\Lambda_H^*= \Hom(H, \Gm)$ denote the
%% weight lattice, $\Lambda_H= \Hom(\Gm, H)$ the coweight lattice, $P_\fg
%% \subset \Lambda_H^*$ the root lattice, and $Q_\fg \subset \Lambda_H^*$
%% the coroot lattice.  

Let $\cB = G/B$ be the flag variety, and
 $\widetilde \cB = G/N$ the base affine space.
The natural projection $\widetilde \cB\to \cB$ is a $G$-equivariant torsor for the natural $H$-action on $\widetilde \cB$.
Such torsors correspond to homomorphisms $B\to H$, and the base affine space $\widetilde \cB\to \cB$ 
corresponds to the tautological homomorphism $B \to B/N \simeq H$.

The cotangent bundle $T^*\cB \to \cB$ classifies  pairs of a Borel subalgebra $\fb\subset \fg$
together with an element $v\in (\fg/\fb)^*\simeq  \fn$.
The moment map for the natural  $G$-action is given by the projection
$$
\xymatrix{
\mu_{\cB}:T^* \cB \ar[r] & \fg^*\simeq \fg & \mu_{\cB}(\fb, v) = v
}$$

The  cotangent bundle $T^*\widetilde \cB\to \widetilde \cB$
classifies  pairs of an element $x_\fb\in \widetilde \cB$ over a Borel subalgebra $\fb\subset \fg$
together with an element $v\in (\fg/\fn)^*\simeq  \fb$.
The moment map for the natural  $G\times H$-action is given by the projection
$$
\xymatrix{
\mu_{\widetilde \cB}:T^*\widetilde  \cB \ar[r] & \fg^* \times \fh^* \simeq \fg\times \fh & \mu_{\widetilde \cB}(x_\fb, v) = (v, [v])
}$$
where $[v]\in \fh = \fb/\fn$ denotes the image of $v\in \fb$.
 
The cotangent bundles are related by Hamiltonian reduction along the $H$-action
$$
\xymatrix{
T^*\cB = T^*(\widetilde \cB/H) \simeq  (p_\fh \circ \mu_{\widetilde \cB})^{-1}(0)/H
}$$
where $p_\fh:\fg\times\fh \to \fh$ denotes projection.

We will be interested in the quotient
$\widetilde \fg = (T^*\widetilde \cB)/H$ classifying a Borel subalgebra $\fb\subset \fg$ together with an element
$v\in (\fg/\fn)^* \simeq \fb$. The moment map for the $G$-action on $T^* \widetilde \cB$ descends to the 
Grothendieck-Springer resolution
$$
\xymatrix{
\mu_{\widetilde \fg}:\widetilde \fg\ar[r] & \fg & \mu_{\widetilde \fg}(\fb, v) = v
}
$$

The Grothendieck-Springer resolution $\mu_{\widetilde \fg}:\widetilde
\fg \to \fg$ is projective, generically finite and $G$-equivariant.
Moreover its relative dualizing sheaf is canonically trivial (and
hence the same is true of any base change of $\mu_{\widetilde \fg}$).
To see this last claim, recall we have fixed a $G$-invariant inner
product on $\fg$ to obtain a $G$-equivariant identification $\fg\simeq
\fg^*$.  This provides an isomorphism $\fg \simeq \fh \oplus \fn \oplus \fn^*$, and thus in turn induces an isomorphism of lines
$\wedge^{\dim\fg} \fg \simeq \wedge^{\dim \fh} \fh$. Thus a
trivialization of $\wedge^{\dim \fh} \fh$ trivializes the canonical
bundle of $\fg$.  Furthermore, the partial moment map $\widetilde \fg
\to \fh$ is smooth with symplectic fibers, hence
%$\widetilde \fg$ is itself smooth and 
a  trivialization
of  $\wedge^{\dim \fh} \fh$ also trivializes the canonical bundle of $\widetilde\fg$.

\subsubsection{The Grothendieck-Steinberg variety}
The Grothendieck-Steinberg variety is the fiber product $\widetilde
\fg \times_\fg \widetilde \fg$ classifying triples of a pair of Borel
subalgebras $\fb_1, \fb_2 \subset \fg$ together with an element $v\in
\fb_1\cap \fb_2$. (Note here the derived fiber product coincides with
the naive fiber product.) 

It has a microlocal interpretation involving the double
coset spaces
$$\xymatrix{Z= B\bs G/B \simeq G\bs \cB \times \cB &
\widetilde Z = N\bs G/N \simeq G\bs \widetilde \cB \times \widetilde
\cB}$$ 
Namely, returning to the identification
$$
\xymatrix{
\widetilde \fg = (T^*\widetilde \cB)/H & \widetilde \cB = G/N
}$$ 
we have a similar identification
 $$ \xymatrix{ G\bs (\widetilde \fg \times_\fg \widetilde \fg) \simeq
  (T^* \widetilde Z)/H\times H}
  $$ 
  or after de-equivariantization
 $$ \xymatrix{ \widetilde \fg \times_\fg \widetilde \fg \simeq
    (pt\times_{BG}T^* \widetilde Z)/H\times H}$$ 

From this viewpoint, the fiber product in the construction of  $ \widetilde \fg \times_\fg \widetilde
\fg$ arises as the moment map equation for Hamiltonian reduction along
the diagonal $G$-action for $T^*\widetilde \cB \times T^*\widetilde
\cB$.

\subsection{Descent pattern}
Given a stack $X$, we will write $\cQ(X)$ for the symmetric monoidal dg category of quasicoherent sheaves on $X$.
All of the stacks $X$ in play will be perfect in the sense of~\cite{BFN} and so the basic structure results for $\cQ(X)$ will apply.

The natural $G$-action on $\widetilde \cB = G/N$ and induced Hamiltonian $G$-action on $\gtil$ endows
$\qc(\gtil)$ with two important compatible structures: an algebraic
action of $G$ as formalized by a $\cQ(G)$-module structure under convolution, and a $\qc(\g)$-module structure  
 via
pullback under $\mu_{\tilde \fg}:\tilde\fg\to\fg$. 
Altogether, these structures are captured by considering $\cQ(\tilde \fg/G)$ as a $\cQ(\fg/G)$-module
 via the  pullback  under the induced map $\tilde\fg/G\to\fg/G$. 
We will identify the symmetries of $\qc(\gtil/G)$
preserving this structure.  

First, the endomorphisms  of $\qc(\gtil)$ as a $\qc(\fg)$-module are given (by
\cite{BFN}) by integral transforms with kernels on the fiber product:
we have a monoidal equivalence
$$
\xymatrix{
\Phi:\qc(\widetilde \fg \times_{\fg} \widetilde \fg) \ar[r]^-\sim & \End_{\qc(\fg)}(\qc(\widetilde\fg))
& 
\Phi_{\cK}(-) = p_{2*}(\cK \otimes p_1^*(-))
}
$$
where $p_1, p_2 : \widetilde \fg \times_\fg \widetilde \fg \to \widetilde \fg$ denote the projections. In particular, the identity functor corresponds
to the integral kernel $\Delta_{\widetilde \fg*} \cO_{\widetilde \fg} \in \qc(\widetilde \fg\times_\fg \widetilde \fg)$ obtained by pushforward along the diagonal map
$$ \xymatrix{ \Delta_{\widetilde \fg} :\widetilde \fg\ar[r] &
  \widetilde \fg \times_{\fg} \widetilde\fg }
$$

Similarly  (again by
\cite{BFN}), the endomorphisms of $\qc(\gtil)$ as a $\cQ(G)$-module
are given by integral transforms with $G$-equivariant kernels on the product
$$
\xymatrix{\End_{\cQ(G)}(\qc(\gtil))\simeq \qc(G\backslash (\gtil \times \gtil)).
}$$ 

Finally  (again by
\cite{BFN}), the endomorphisms of $\qc(\gtil)$ as a Hamiltonian $G$-category, or in other words, the endomorphisms
of $\cQ(\tilde \fg/G)$ as a $\cQ(\fg/G)$-module, are given by integral transforms with equivariant kernels on the fiber product
$$
\xymatrix{
\End_{\cQ(\fg/G)} (\qc(\gtil/G))\simeq
%\End_{\cQ(\fg/G)} (\qc(\gtil/G)) \simeq 
\qc(G\backslash (\gtil \times_\fg \gtil))
}$$ 
or in other words, the monoidal dg category of equivariant
quasicoherent sheaves on the Grothendieck-Steinberg variety.

\subsubsection{Descent (co)monad}
Consider the standard adjunction and Grothendieck duality adjunction
on stable dg categories of quasi-coherent sheaves
$$
\xymatrix{
\mu_{\widetilde \fg}^*: \qc( \fg)\ar@<-0.7ex>[r] &\ar@<-0.7ex>[l] \qc(\widetilde \fg):\mu_{\widetilde \fg *}
&
\mu_{\widetilde \fg *}: \qc(\widetilde \fg)\ar@<-0.7ex>[r] &\ar@<-0.7ex>[l] \qc(\fg):\mu_{\widetilde \fg}^!
}
$$ Since the relative dualizing sheaf of $\mu_{\widetilde \fg}$ is
canonically trivial, we have a canonical equivalence $ \mu_{\widetilde
  \fg}^! \simeq \mu_{\widetilde \fg}^*, $ but we distinguish them to
avoid confusion.  By the projection formula, we can view these as
adjunctions of $\qc(\fg)$-module categories.  The adjunctions are also
evidently $G$-equivariant, and in total preserve the Hamiltonian $G$-structure.

%\begin{remark}
%It is illuminating to organize the above situation in the following framework. 
%
%For $Y$ a smooth (quasi-compact, quasi-separated) scheme,
%recall that perfect objects
%(which coincide with coherent objects  since $Y$ is smooth) form
%the full $\oo$-subcategory of compact objects 
%$$
%\xymatrix{
%\Perf(Y) \simeq \qc(Y)^c \subset \qc(Y) &
%\qc(Y) \simeq \Ind \Perf(Y)
%}$$
%and Serre duality provides an equivalence
%$$\xymatrix{
%\Perf(Y) \simeq \Perf(Y)^{op} 
%}$$
%For a proper map $f:\widetilde Y \to Y$, the pushforward 
%$$
%\xymatrix{
%f_*:\qc(\widetilde Y) \ar[r] & \qc(Y)}
%$$ 
%preserves compact objects
%(it preserves coherent objects which coincide with perfect objects since $Y$ is smooth). 
%
%A trivialization
%of the relative
%dualizing sheaf of $f$, or in other words an identification of adjoints $f^! \simeq f^*$, provides a compatible square
%$$
%\xymatrix{
%\ar@<-5ex>[d]_-{f_*}\Perf(\widetilde Y) \simeq \Perf(\widetilde Y)^{op}\ar@<5ex>[d]^-{f_*^{op}}\\
%\Perf(Y)  \simeq \Perf(Y)^{op}
%}
%$$
%
%Conversely, suppose a morphism $F:\cC\to \cD$ of self-dual small stable $\oo$-categories is compatible with duality
%$$
%\xymatrix{
%\ar@<-3ex>[d]_-{F}\cC \simeq \cC^{op}\ar@<2ex>[d]^-{F^{op}}\\
%\cD  \simeq \cD^{op}
%}
%$$
%\end{remark}
%

Let $T =\mu_{\widetilde \fg }^! \mu_{\widetilde \fg *} $ denote the
resulting monad, or in other words, algebra object in the monoidal
category of linear endomorphisms $\End_{\qc(\fg)}( \qc(\widetilde
\fg))$. Likewise, let $T^\vee =\mu_{\widetilde \fg }^* \mu_{\widetilde
  \fg *} $ denote the resulting comonad. Since
$\mu^*_{\widetilde\fg}$ is conservative ($\mu_{\widetilde\fg}$ is surjective) and continuous,
$\mu^!_{\widetilde\fg}$ is as well; and since $\mu^!_{\widetilde\fg}$ is cocontinuous, 
$\mu^*_{\widetilde\fg}$ is as well.  Thus the Barr-Beck theorem
provides canonical identifications of Hamiltonian $G$-categories
$$ 
\xymatrix{  \qc(\widetilde \fg)_T \simeq
\qc(\fg) \simeq
  \qc(\widetilde \fg)^{T^\vee} 
  }
$$

By base change and standard identities, the comonad  $T^\vee$ is given by tensoring with the sheaf of functions
$\cO_{\widetilde\fg\times_\fg\widetilde \fg}= p_1^*\cO_{\tilde \fg}$ with its canonical coalgebra structure.
Likewise, the
monad $T$ is given by tensoring with the relative dualizing sheaf
$\omega_{\widetilde\fg\times_\fg\widetilde \fg/\gtil }= p_1^!\cO_{\tilde \fg}$ with its canonical algebra structure.
Note that the identification of underlying functors $T\simeq T^\vee$ reflects the equivalence
$p_1^*\simeq p_1^!$ which devolves by base change from the original
 ambidextrous adjunction of $\mu_{\tilde\fg}$.

\subsection{Specified eigenvalues} We describe here the above descent picture
to distinguished loci within $\fg$.

\subsubsection{Regular locus}
Over the open regular 
locus $\fg^{r} \subset \fg$, we have a fiber square
$$
\xymatrix{
\ar[d] \widetilde \fg^{r} = \widetilde \fg\times_{\fg} \fg^{r}\ar[r]^-{\mu_{\widetilde \fg^{r}}} & \fg^{r}\ar[d] \\
\fh \ar[r]_-\pi & \fc = \fh\git W_{\fg} }
$$

Thus descent over $\fg^{r} \subset \fg$ is simply  the base change of
descent over the geometric  invariant theory quotient.

\subsubsection{Regular semisimple locus}
Over the open regular semisimple locus $\fg^{rs} \subset \fg$,
we have a fiber square
$$
\xymatrix{
\ar[d] \widetilde \fg^{rs} = \widetilde \fg\times_{\fg} \fg^{rs}\ar[r]^-{\mu_{\widetilde \fg^{rs}}} & \fg^{rs}\ar[d] \\
\fh^{r} \ar[r]_-{\pi^r}  & \fc^{r} = \fh^{r}/ W_{\fg} }
$$
In other words, we have a free $W$-action and quotient identification
$$
\xymatrix{
W_{\fg}\times\widetilde\fg^{rs} \ar[r] & \widetilde\fg^{rs} & \fg^{rs} \simeq \widetilde\fg^{rs} /W
}
$$

Thus descent  over  $\fg^{rs} \subset \fg$ is simply equivariance for the Weyl group $W$.

\subsubsection{Nilpotent cone}

 Over the nilpotent cone $\cN = \fg \times_{\fc} \{0\} \subset \fg$,
 we have the base change
$$ \xymatrix{ \mu_{\widetilde \fg_0}: \widetilde \fg_0 = \widetilde
   \fg\times_{\fc} \{0\} \ar[r] & \cN }$$ where $\widetilde \fg_0$ is
 a non-reduced scheme with underlying reduced scheme the usual
 Springer resolution $\widetilde \cN \simeq T^* \cB$ classifying a
 Borel subalgebra $\fb\subset \fg$ together with an element $v\in
 (\fg/\fb)^* \simeq \fn$.

By construction, descent along $\mu_{\widetilde \fg_0}$ is governed by
the restricted algebra object
$$
\cO_{\widetilde \fg_0 \times_{\cN} \widetilde \fg_0} \simeq \cO_{\widetilde \fg\times_\fg \widetilde \fg}|_{\cN}
$$

\begin{remark}

To work instead with the traditional reduced Springer resolution 
$$
\xymatrix{
\mu_{\widetilde \cN}:\widetilde \cN \ar[r] &  \cN
}
$$ 
we must
 pass to ind-coherent sheaves. In applying the Barr-Beck theorem, we use that 
the adjunction
$$
\xymatrix{
\mu_{\widetilde \fg_0 *}: \qc(\widetilde \fg_0)\ar@<-0.7ex>[r] &\ar@<-0.7ex>[l] \qc(\cN):\mu_{\widetilde \fg_0}^!
}
$$ 
comprises a compact left adjoint and hence continuous right adjoint.
But in contrast, this does not hold for the adjunction
$$
\xymatrix{
\mu_{\widetilde \cN *}: \qc(\widetilde \cN)\ar@<-0.7ex>[r] &\ar@<-0.7ex>[l] \qc(\cN):\mu_{\widetilde \cN}^!
}
$$ 
For example, $\widetilde \cN$ is smooth, hence all skyscraper sheaves on it are compact, but $\cN$ is singular,
hence many skyscraper sheaves on it are not compact.
Rather we must pass to ind-coherent sheaves and work with the analogous adjunction
$$
\xymatrix{
\mu_{\widetilde \cN *}: \cQ^!(\widetilde \cN)\ar@<-0.7ex>[r] &\ar@<-0.7ex>[l] \cQ^!(\cN):\mu_{\widetilde \cN}^{!}
}
$$ 
Here by construction, the adjunction comprises a compact left adjoint and hence continuous right adjoint, 
and thus $\mu_{\widetilde \cN}^{!}$ exhibits $\cQ^!(\cN)$   as monadic over $\cQ^!(\widetilde \cN)$.
\end{remark}

%%%%%%%%%%%%%%%%%%%%%%%%%%%%%%%%%%%%%%%%%%%
%%%%%%%%%%%%%%%%%%%%%%%%%%%%%%%%%%%%%%%%%%%
%%%%%%%%%%%%%%%%%%%%%%%%%%%%%%%%%%%%%%%%%%%
%%%%%%%%%%%%%%%%%%%%%%%%%%%%%%%%%%%%%%%%%%%

\section{Beilinson-Bernstein localization}

Now we will repeat the constructions of the previous section after
quantization of the natural Poisson structures, that is, after turning on the
noncommutative deformation from cotangent bundles to $\D$-modules.
%%%%%%%%%%%%%%%%%%%%%%%%%%%%%%%%%%%%%%%%%%%
%%%%%%%%%%%%%%%%%%%%%%%%%%%%%%%%%%%%%%%%%%%

\subsection{Quantization}

Let $\fU\fg$ be the universal enveloping algebra of $\fg$, and
$\fZ\fg\subset \fU\fg$ the Harish-Chandra center.

Let $\fU\fg\module$ denote the dg category of
$\fU\fg$-modules.  Informally speaking, $\fU\fg\module$ consists of
noncommutative modules on the Poisson manifold $\fg\simeq \fg^*$.

Let $\fU\fg\perf \subset \fU\fg\module$ denote the small stable full
dg subcategory of perfect modules so that $\fU\fg\module \simeq
\Ind( \fU\fg\perf)$.

\begin{lemma}
There are canonical equivalences 
$$
\xymatrix{
%\fU\fg \simeq (\fU\fg)^{op} & 
\fU\fg\perf \simeq \fU\fg\perf^{op} & \fU\fg\module\simeq \fU\fg\module^\vee
}
$$
\end{lemma}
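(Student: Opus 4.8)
The plan is to deduce both equivalences from the single observation that $\fU\fg$ is an associative algebra equipped with a canonical anti-automorphism, which makes the dg category of perfect $\fU\fg$-modules canonically self-dual; the statement about $\fU\fg\module$ is then purely formal from the $\Ind$/duality dictionary recalled in Section~\ref{cat context}.

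First I would fix the \emph{principal anti-automorphism} $\tau\colon \fU\fg \risom \fU\fg^{op}$, the unique algebra homomorphism restricting to $x\mapsto -x$ on $\fg\subset\fU\fg$. Restriction of scalars along $\tau$ is a canonical equivalence $\tau_*\colon \fU\fg^{op}\module \risom \fU\fg\module$ carrying $\fU\fg^{op}$ to $\fU\fg$, hence restricting to a canonical equivalence $\fU\fg^{op}\perf \simeq \fU\fg\perf$ on perfect objects. It therefore suffices to produce a canonical equivalence $\fU\fg\perf^{op}\simeq \fU\fg^{op}\perf$.

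For this I would use the duality functor $\BD = \RHom_{\fU\fg}(-,\fU\fg)$, which sends a left module $M$ to the right module $\RHom_{\fU\fg}(M,\fU\fg)$ and is contravariant, i.e.\ defines a functor $\fU\fg\module^{op}\to \fU\fg^{op}\module$. On the free module it returns $\fU\fg$ itself, and since perfectness is preserved under finite colimits and retracts, which generate $\fU\fg\perf$ out of the free module, $\BD$ restricts to $\fU\fg\perf^{op}\to\fU\fg^{op}\perf$. The double-dual functor $\RHom_{\fU\fg^{op}}(\RHom_{\fU\fg}(-,\fU\fg),\fU\fg)$ carries a canonical transformation from the identity which is an equivalence on $\fU\fg$; the full subcategory of $\fU\fg\perf$ on which it is an equivalence is thick and contains $\fU\fg$, hence is all of $\fU\fg\perf$, so $\BD$ is an equivalence. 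Composing $\BD$ with $\tau_*$ gives the first equivalence, manifestly canonical since both $\tau$ and the bimodule $\fU\fg$ are canonical. No Noetherian or smoothness hypothesis on $\fU\fg$ is needed: this is just the standard self-duality of perfect modules over any associative ring. The one point that needs genuine attention is exactly this middle step — verifying that $\BD$ lands in perfect modules and that biduality holds — but both reduce along the thick-subcategory argument to the free module, where they are immediate, so there is really no obstacle, only routine verification.

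The second equivalence is then formal. By the generalities of Section~\ref{cat context}, for any $\cC\in\stab_\BC$ the presentable category $\Ind\cC$ is dualizable with $(\Ind\cC)^\vee\simeq\Ind(\cC^{op})$. Taking $\cC=\fU\fg\perf$, using $\fU\fg\module\simeq\Ind(\fU\fg\perf)$ as noted above together with the equivalence $\fU\fg\perf^{op}\simeq\fU\fg\perf$ just established, we obtain $\fU\fg\module^\vee\simeq\Ind(\fU\fg\perf)^\vee\simeq\Ind(\fU\fg\perf^{op})\simeq\Ind(\fU\fg\perf)\simeq\fU\fg\module$.
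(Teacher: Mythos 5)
Your proof is correct and takes essentially the same route as the paper: duality against the bimodule $\fU\fg$ combined with the isomorphism $\fU\fg\simeq\fU\fg^{op}$ induced by negation on $\fg$ (your principal anti-automorphism $\tau$ is exactly the paper's $\fg\simeq\fg^{op}$), followed by the formal identity $\fU\fg\module^\vee\simeq\Ind(\fU\fg\perf^{op})$. The only cosmetic difference is that the paper normalizes the dualizing functor with a shift, $\intHom_{\fU\fg}(-,\fU\fg[\dim\fg])$, which is immaterial for the statement of the lemma; your thick-subcategory verification of biduality just spells out what the paper leaves implicit.
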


\begin{proof}
First, viewing $\fU\fg$ as a $\fU\fg$-bimodule, we define the duality identification
$$
\xymatrix{
\fU\fg\perf^{op} \ar[r]^-\sim & \fU\fg^{op}\perf & 
M\ar@{|->}[r] &  \intHom_{\fU\fg}(M, \fU\fg[\dim\fg])
}
$$

Now let $\fg^{op}$ denote the vector space $\fg$  with the opposite Lie bracket 
$$[\cdot,\cdot]_{\fg^{op}} = -[\cdot,\cdot]_{\fg}$$ The negation map $\fg\to \fg$, $v\mapsto -v$ provides
a canonical isomorphism $\fg\simeq\fg^{op}$ and hence a canonical isomorphism $\fU\fg \simeq \fU\fg^{op}$.
This establishes the first assertion, and the second then follows from the standard identity
$$
  \fU\fg\module^\vee\simeq  \Ind(\fU\fg\perf^{op})
$$
\end{proof}

\begin{remark}
The equivalence $\fU\fg\perf \simeq \fU\fg\perf^{op}$ is a twisted form of the Serre duality equivalence $\Perf(\fg^*) \simeq \Perf(\fg^*)^{op}$. Namely, the former invokes the negation on the vector space $\fg^*$ while the latter does not.
\end{remark}

%\begin{remark}
%The equivalence  $\fU\fg\perf \simeq \fU\fg\perf^{op}$ intertwines $\fZ\fg$-linear structures over the automorphism of $\fZ\fg$
%induced by the negation map of $\fg$.
%\end{remark}

Let  $\D_{\widetilde \cB} \in \qc(\widetilde \cB)$ denote the sheaf of differential operators on $\widetilde \cB$. Let $\cD_\mon(\widetilde \cB)$ denote the dg category of weakly $H$-equivariant $\D$-modules
on $\widetilde \cB$. Its objects are $H$-equivariant quasicoherent sheaves on $\widetilde \cB$ equipped with a compatible 
$H$-equivariant action of $\D_{\widetilde \cB}$.
 Informally speaking, $\cD_\mon(\widetilde \cB)$ consists of noncommutative modules on the Poisson manifold $\widetilde \fg = ( T^* \widetilde \cB)/H$.

Let $\widetilde \cD_{\cB}\in \qc( \cB)$ denote the sheaf of
$H$-invariant differential operators on $\widetilde \cB$.  
%It admits
%the natural presentation 
%$$\widetilde \cD_{\cB} \simeq \D_{\cB}
%\otimes_{\fZ\fg} \fU \fh$$ where $\fU\fh$ acts by vector fields on the
%right, and $\fZ\fg \subset \fU\fg$ by differential operators on the
%left.  
Then $\cD_\mon(\widetilde \cB)$ is equivalently the dg category of quasicoherent sheaves on $\cB$ equipped with a
compatible action of $\widetilde \cD_{\cB}$.

Let $\cD^c_\mon(\widetilde \cB) \subset\cD_\mon(\widetilde \cB)$ denote the full dg-subcategory of coherent modules so that $\cD_\mon(\widetilde \cB) \simeq \Ind( \cD^c_\mon(\widetilde \cB))$.

\begin{lemma}
Verdier duality provides canonical equivalences 
$$
\xymatrix{
\cD^c_\mon(\widetilde \cB) \simeq \cD^c_\mon(\widetilde \cB)^{op}
& 
\cD_\mon(\widetilde \cB) \simeq \cD_\mon(\widetilde \cB)^\vee
}
$$
\end{lemma}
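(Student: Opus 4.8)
The plan is to mirror the proof of the previous lemma, with Verdier duality for $\D$-modules playing the role of the bimodule duality $\intHom_{\fU\fg}(-,\fU\fg[\dim\fg])$, while tracking the weak $H$-equivariant structure throughout. I would work on the smooth proper variety $\cB$, using the presentation $\cD_\mon(\widetilde\cB)\simeq\widetilde\cD_{\cB}\module$ with $\widetilde\cD_{\cB}\simeq\D_{\cB}\otimes_{\fZ\fg}\fU\fh$, and $\cD^c_\mon(\widetilde\cB)$ as the subcategory of coherent modules. First I would record that $\widetilde\cD_{\cB}$ carries the order filtration, whose associated graded is the structure sheaf of the total space of $\widetilde\fg\to\cB$ — a vector bundle over $\cB$ of rank $\dim\fb$, in particular smooth; hence $\widetilde\cD_{\cB}$ is a sheaf of Noetherian algebras of finite homological dimension. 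Consequently the naive duality $\mathbb D_{0}(M)=\RHom_{\widetilde\cD_{\cB}}(M,\widetilde\cD_{\cB})[\dim\cB]$ carries coherent left $\widetilde\cD_{\cB}$-modules to coherent left modules over the opposite sheaf of algebras $\widetilde\cD_{\cB}^{op}$, squares to the identity, and so defines a canonical equivalence from $\cD^c_\mon(\widetilde\cB)^{op}$ to the category of coherent $\widetilde\cD_{\cB}^{op}$-modules.

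Next I would identify $\widetilde\cD_{\cB}^{op}$-modules with $\cD_\mon(\widetilde\cB)$ itself. Writing $\widetilde\cD_{\cB}^{op}\simeq\D_{\cB}^{op}\otimes_{\fZ\fg}\fU\fh^{op}$, the transpose anti-automorphism of differential operators on $\cB$ gives the canonical side-changing equivalence $\D_{\cB}^{op}\module\simeq\D_{\cB}\module$, a twist by the canonical bundle $\omega_{\cB}$ — i.e.\ by an integral weight — while the negation isomorphism $\fh\risom\fh^{op}$ of the previous lemma gives $\fU\fh^{op}\simeq\fU\fh$, intertwining the $\cO(\fh^*)$-linear structure via $\lambda\mapsto-\lambda$. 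Combining, $\widetilde\cD_{\cB}^{op}\module$ is canonically $\cD_\mon(\widetilde\cB)$, but with its $\cO(\fh^*)$-linear structure transported through an affine involution $\lambda\mapsto-\lambda+c$ with $c\in\Lambda^*\subset\fh^*$ integral. Since translation by an integral weight is realized by a canonical self-equivalence of $\cD_\mon(\widetilde\cB)$ (shift the weak $H$-equivariant structure by $c$), composing it with $\mathbb D_{0}$ yields the asserted canonical contravariant self-equivalence $\cD^c_\mon(\widetilde\cB)\risom\cD^c_\mon(\widetilde\cB)^{op}$, now intertwining $\cO(\fh^*)$-linear structures through honest negation. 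As in the remark after the previous lemma, it is the negation on $\fh^*$ — not the identity — that appears; concretely this is the monodromic incarnation of Verdier duality as a twisted form of Serre duality on the smooth space $\widetilde\fg=T^*\widetilde\cB/H$, whose relative dualizing sheaf over $\cB$ is trivialized by the fixed trivialization of $\wedge^{\dim\fh}\fh$ recalled above. The second equivalence then follows formally, exactly as in the proof of the $\fU\fg$-module lemma: $\cD_\mon(\widetilde\cB)=\Ind(\cD^c_\mon(\widetilde\cB))$ with compact objects the coherent modules, so $\cD_\mon(\widetilde\cB)^\vee\simeq\Ind(\cD^c_\mon(\widetilde\cB)^{op})\simeq\Ind(\cD^c_\mon(\widetilde\cB))=\cD_\mon(\widetilde\cB)$.

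The step I expect to require the most care is not the existence of a duality functor — finite homological dimension of $\widetilde\cD_{\cB}$ and involutivity of $\mathbb D_{0}$ are standard once one notes the smoothness of $\cB$ and of $\widetilde\fg$ — but rather the compatibility of Verdier duality with the monodromic (that is, $\fh^*$-linear) structure: identifying the opposite sheaf of algebras $\widetilde\cD_{\cB}^{op}$ precisely, seeing that the induced operation on monodromy is negation composed with an integral translation, and checking that that integral translation is absorbed by a canonical self-equivalence so that the resulting equivalence is genuinely canonical. This is the honest analogue of the subtlety flagged for $\fU\fg$-modules, where negation on $\fg^*$ rather than the identity was needed.
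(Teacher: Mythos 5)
Your proposal is correct and follows the same route as the paper: the first equivalence is Verdier duality (which you simply unwind explicitly via $\RHom_{\widetilde\cD_{\cB}}(-,\widetilde\cD_{\cB})$, finite homological dimension of $\widetilde\cD_{\cB}$, side-changing, and the monodromy negation/integral-shift bookkeeping, all of which the paper leaves implicit in the phrase ``Verdier duality''), and the second equivalence follows from the standard identity $\cD_\mon(\widetilde\cB)^\vee\simeq\Ind(\cD^c_\mon(\widetilde\cB)^{op})$ exactly as in the paper.
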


\begin{proof}
The first assertion is Verdier duality, and the second follows from the standard identity 
$ \cD_\mon(\widetilde \cB)^\vee\simeq \Ind( \cD^c_\mon(\widetilde \cB)^{op})$.
\end{proof}

\begin{remark}
When keeping track of additional structures, it is useful to keep in mind the 
opposite  base affine space $\widetilde \cB^{op} = N\bs G$. The inverse map $G\to G$, $g\mapsto g^{-1}$
provides a canonical isomorphism $\widetilde \cB \simeq \widetilde \cB^{op}$. 
\end{remark}

Consider the localization adjunction
$$
\xymatrix{
\gamma^*: \fU\fg\module\ar@<0.7ex>[r] &\ar@<0.7ex>[l]\cD_\mon(\widetilde \cB):\gamma_*
}
$$  
$$
\xymatrix{
\gamma^*(M) = \cD_{\widetilde \cB}\otimes_{\fU\fg} M
&
\gamma_*(\cM) = \Hom(\cD_{\widetilde \cB}, \cM)
}
$$
Informally speaking, this is a quantization of the standard adjunction for the Grothendieck-Springer
resolution $\mu_{\widetilde \fg}:\widetilde\fg\to \fg$.

\begin{prop}
The right adjoint $\gamma_*$ is continuous and compact, and hence itself admits a continuous right adjoint $\gamma^!$.
Furthermore, there is a canonical identification $\gamma^! \simeq \gamma^*$.
\end{prop}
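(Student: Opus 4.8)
The plan is to mimic the classical argument from Section~\ref{classical} (the discussion of the adjunctions $(\mu_{\widetilde\fg}^*,\mu_{\widetilde\fg*})$ and $(\mu_{\widetilde\fg*},\mu_{\widetilde\fg}^!)$) in the quantized setting, using the presentation $\widetilde\cD_\cB \simeq \D_\cB\otimes_{\fZ\fg}\fU\fh$. First I would check continuity of $\gamma_*$: the functor $\gamma_*(\cM) = \Hom(\D_{\widetilde\cB},\cM)$ is computed by taking $H$-invariant global sections of $\cM$ together with its $\widetilde\cD_\cB$-action, and since $\cB$ is proper and $\widetilde\cD_\cB$ is a coherent sheaf of algebras on $\cB$ of finite homological dimension, global sections has bounded cohomological amplitude and commutes with colimits; equivalently, $\gamma_*$ corresponds under the module description to the forgetful functor along $\fU\fg \hookrightarrow \Gamma(\cB,\widetilde\cD_\cB)$, which is manifestly continuous.

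Next I would verify that $\gamma_*$ is proper, i.e. preserves compact objects: a compact object of $\cD_\mon(\widetilde\cB)$ is a coherent $\widetilde\cD_\cB$-module, and its (derived, $H$-invariant) global sections form a perfect complex over $\fU\fg$ — this is the standard finiteness statement that underlies Beilinson--Bernstein, following from properness of $\cB$ and coherence of the $\widetilde\cD_\cB$-module, noting crucially that $\fZ\fg$ is Noetherian and $\fU\fh$ is finite flat over $\fZ\fg$ so that base-changing a coherent $\D_\cB$-module along $\fZ\fg\to\fU\fh$ stays coherent. By the formalism recalled in Section~\ref{cat context}, a continuous functor between compactly generated categories that preserves compact objects admits a continuous right adjoint $\gamma^!$ (it is the ind-extension of the right adjoint on compact objects, or dually one uses that the left adjoint $\gamma_*$ of $\gamma^!$ being proper forces $\gamma^!$ cocontinuous hence, being also a right adjoint, continuous).

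For the identification $\gamma^!\simeq\gamma^*$, the point is the (twisted, quantized) Calabi--Yau structure: $\widetilde\cD_\cB$ carries a canonical self-duality as a bimodule shifted by $\dim\cB$, coming from the fact that $\D_\cB$ is a twisted Calabi--Yau algebra on $\cB$ of dimension $\dim\cB$ together with the trivialization of $\wedge^{\dim\fh}\fh$ discussed in the Grothendieck--Springer recollections (which trivializes the relative dualizing data of $\widetilde\cB\to\cB$ and matches the classical fact $\mu_{\widetilde\fg}^!\simeq\mu_{\widetilde\fg}^*$). Concretely, $\gamma^*(M) = \widetilde\cD_\cB\otimes_{\fU\fg}M$ while $\gamma^!$ is computed by $\RHom_{\fU\fg}$ against the bimodule $\widetilde\cD_\cB$; the self-duality of $\widetilde\cD_\cB$ as a $(\widetilde\cD_\cB,\fU\fg)$-bimodule up to the shift $[\dim\cB]$ converts the latter into the former. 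I would assemble this from the analogous bimodule self-duality for $\D_\cB$ over $\cB$ (Verdier duality, already invoked in the lemma above) tensored up along $\fZ\fg\to\fU\fh$, using that this base change is Gorenstein of relative dimension $0$ precisely because of the chosen trivialization.

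The main obstacle, I expect, is the properness of $\gamma_*$ — i.e.\ the finiteness statement that coherent monodromic $\D$-modules on $\widetilde\cB$ have perfect global sections over $\fU\fg$ (not merely over $\fZ\fg$). One must be careful that coherence is preserved under the base change along $\fZ\fg\to\fU\fh$ and that the resulting complex over $\fU\fg$ genuinely has finite Tor-amplitude; the cleanest route is to reduce, via the presentation $\widetilde\cD_\cB\simeq\D_\cB\otimes_{\fZ\fg}\fU\fh$ and properness of $\cB$, to the coherence of $\Gamma(\cB,\cM)$ as a module over $\Gamma(\cB,\D_\cB)$ and then invoke that $\Gamma(\cB,\D_\cB)$ is a finitely generated $\fZ\fg$-algebra with $\fU\fh$ finite over $\fZ\fg$. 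The identification $\gamma^!\simeq\gamma^*$, once the bookkeeping of twists is set up using the already-established trivializations, should be essentially formal.
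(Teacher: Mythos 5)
Your route is viable and genuinely different from the paper's. The paper first records two duality lemmas ($\fU\fg\perf\simeq\fU\fg\perf^{op}$ via $\intHom_{\fU\fg}(-,\fU\fg[\dim\fg])$ and negation, and Verdier duality on $\cD^c_\mon(\widetilde\cB)$) and then proves the proposition in one stroke: it checks that $\gamma_*$ restricted to compact objects intertwines these two dualities. That single commutative square simultaneously gives properness (coherent monodromic modules go to perfect $\fU\fg$-modules) and, by passing to duals of the big categories, the exchange of the left adjoint $\gamma^*$ with the right adjoint $\gamma^!$, hence $\gamma^!\simeq\gamma^*$. You instead argue continuity and properness directly (properness of $\cB$, coherence, $\fU\fh$ finite flat over $\fZ\fg$ --- to get perfection you should also invoke the finite global dimension of $\fU\fg$), and then derive ambidexterity from an explicit bimodule self-duality of $\widetilde\cD_\cB$ over $\fU\fg$. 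This is more computational but morally the same Calabi--Yau/crepancy input that the paper packages into the duality square.

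There are two concrete problems in your third step. First, the shift: if the self-duality of $\widetilde\cD_\cB$ as a $(\widetilde\cD_\cB,\fU\fg)$-bimodule really held only up to $[\dim\cB]$, you would conclude $\gamma^!\simeq\gamma^*[\pm\dim\cB]$, not $\gamma^!\simeq\gamma^*$; already for regular $\lambda$, where $\gamma_*$ restricts to an equivalence onto modules with that infinitesimal character, the relative shift must be zero. And it is zero: at the level of global sections $\Gamma(\cB,\widetilde\cD_\cB)\simeq\fU\fg\otimes_{\fZ\fg}\fU\fh$ is finite \emph{free} of rank $|W|$ as a $\fU\fg$-module, and $\fZ\fg\subset\fU\fh$ is finite flat Gorenstein of relative dimension zero with trivializable relative dualizing module, so $\RHom_{\fU\fg}(\fU\fg\otimes_{\fZ\fg}\fU\fh,\,\fU\fg)\simeq\fU\fg\otimes_{\fZ\fg}\fU\fh$ concentrated in degree zero. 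The $[\dim\cB]$ you introduce is an artifact of Serre duality on $\cB$; for $\D$-modules that twist is absorbed into the left/right module and Verdier duality conventions, and precisely because you work monodromically (flat over $\fh^*$, as the paper emphasizes) no Koszul-type shift from $\fZ\fg$-torsion appears either. Second, the formula ``$\gamma^!$ is computed by $\RHom_{\fU\fg}$ against the bimodule $\widetilde\cD_\cB$'' needs justification: $\cB$ is not $\widetilde\cD$-affine in general (global sections is not an equivalence onto $\fU\fg\otimes_{\fZ\fg}\fU\fh$-modules at singular parameters), so the right adjoint of the sheaf-level functor $\gamma_*$ is not given by a naive coinduction formula without an argument; this is exactly the point the paper's duality-square formalism sidesteps, since under the self-dualities of the compactly generated categories the right adjoint of $\gamma_*$ is identified with the dual of $\gamma^*$ formally. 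With the shift corrected and the coinduction identification justified (or replaced by the duality argument), your outline closes the proof.
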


\begin{proof} 
$\gamma_*$ is continuous since its left adjoint $\gamma^*$ preserves compact objects. Moreover $\gamma_*$ itself preserves compact objects, indeed it's clear that coherent $\cD$-modules get taken to finitely presented $\fU\fg$-modules.
We also have a compatibility with duality of the form $$
\xymatrix{
\ar@<-5ex>[d]_-{\gamma_*} \cD^c_\mon(\widetilde \cB) \simeq \cD^c_\mon(\widetilde \cB)^{op}\ar@<5ex>[d]^-{\gamma_*^{op}}\\
\fU\fg\perf \simeq \fU\fg\perf^{op}
}
$$

To check the equivalence $\gamma^!\simeq \gamma^*$, we apply the Rees construction. Namely, we have a $\Gm$-equivariant $\mathbb A^1$-family of adjunctions $(\gamma^*, \gamma_*, \gamma^!)$ degenerating to the classical version of the localization functors, associated to the $\Gm$-equivariant Poisson map which is the Grothendieck-Springer resolution  $\mu_{\widetilde \fg}:\widetilde\fg\to \fg$. We need to show the canonical map $\gamma^* \to \gamma^!$ is an equivalence, i.e., its cone vanishes. It's enough to check this when applied to the Rees construction of $\fU\fg$ itself. At $\hbar=0$, the cone vanishes by the Calabi-Yau property of $\mu_{\widetilde \fg}$. Hence it vanishes for all $\hbar$ by semicontinuity.  
\end{proof}

\subsubsection{Linearity} 
Let $\fZ\fg  \subset \fU\fg$ be the Harish-Chandra center,
and $\fU\fh$ the universal enveloping algebra of $\fh$.
We also have the canonical embedding
$\fZ\fg\subset \fU\fh$ as the $\rho$-shifted Weyl invariants.

Observe that $\fU\fg\module$ is naturally $\fZ\fg$-linear, and
 $\cD_\mon(\widetilde \cB)$ is naturally $\fU\fh$-linear and hence $\fZ\fg$-linear.

\begin{lemma}
The adjunctions 
$$
\xymatrix{
\gamma^*: \fU\fg\module\ar@<0.7ex>[r] &\ar@<0.7ex>[l]\cD_\mon(\widetilde \cB):\gamma_*
&
\gamma_*: \cD_\mon(\widetilde \cB)\ar@<0.7ex>[r] &\ar@<0.7ex>[l]\fU\fg\module:\gamma^!
}
$$ 
are naturally $\fZ\fg$-linear.
\end{lemma}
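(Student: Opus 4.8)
The plan is to observe that all three functors $\gamma^*$, $\gamma_*$, $\gamma^!$ are manufactured from a single bimodule, and then to promote that bimodule to a bimodule \emph{over} $\fZ\fg$; the asserted $\fZ\fg$-linearity of the functors is then formal. The only genuine content is a classical Harish--Chandra compatibility on the basic affine space, which I would isolate as the single substantive step.

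First I would record the bimodule description of the localization adjunction: $\gamma^*(-)=\cD_{\widetilde\cB}\otimes_{\fU\fg}(-)$ and $\gamma_*(-)=\Hom_{\widetilde \cD_\cB}(\cD_{\widetilde\cB},-)$, where $\cD_{\widetilde\cB}$ is regarded as a $(\widetilde\cD_\cB,\fU\fg)$-bimodule --- a left module over the sheaf of $H$-invariant differential operators $\widetilde \cD_\cB$ by its tautological action, and a right $\fU\fg$-module via the algebra map $\fU\fg\to\widetilde \cD_\cB$ coming from the left $G$-action on $\widetilde\cB=G/N$. The functor $\gamma^!$ is, by the Proposition above, the Verdier-twisted tensor against the same bimodule --- equivalently, it is $\gamma^*$ conjugated by the duality equivalences $\fU\fg\perf\simeq\fU\fg\perf^{op}$ and $\cD^c_\mon(\widetilde\cB)\simeq\cD^c_\mon(\widetilde\cB)^{op}$, which intertwine the ambient $\fU\fg$- and $\widetilde\cD_\cB$-module structures --- so it is covered by the same argument.

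The substantive input, which I would isolate as a lemma and cite rather than call ``evident'', is the Harish--Chandra compatibility: the composite $\fZ\fg\hookrightarrow\fU\fg\to\widetilde\cD_\cB$ (left $G$) lands in the center $Z(\widetilde\cD_\cB)\simeq\fU\fh$ and agrees there with the composite $\fZ\fg\hookrightarrow\fU\fh\to\widetilde\cD_\cB$ (right $H$), where $\fZ\fg\hookrightarrow\fU\fh$ is exactly the $\rho$-shifted inclusion of Weyl invariants recalled above. Granting this, the two $\fZ\fg$-module structures on $\cD_{\widetilde\cB}$ --- restriction along $\fZ\fg\hookrightarrow\fU\fh\hookrightarrow Z(\widetilde\cD_\cB)$ acting on the left, and restriction along $\fZ\fg\hookrightarrow\fU\fg$ acting on the right --- are literally equal (multiplication by one and the same central operator), so $\cD_{\widetilde\cB}$ canonically upgrades to a bimodule object in $\fZ\fg$-modules.

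To conclude, tensoring and internal-Hom against a bimodule that is $\fZ\fg$-central on both sides in this compatible way directly yield $\fZ\fg$-linear functors, so $\gamma^*$ is $\fZ\fg$-linear; and $\gamma_*$, $\gamma^!$ are $\fZ\fg$-linear as well --- either because their $\fZ\fg$-structure is already internal to the bimodule description, or because the continuous right adjoint of a colimit-preserving $\fZ\fg$-linear functor (again with continuous right adjoint, as here) inherits a strict $\fZ\fg$-linear structure. The main obstacle, such as it is, lies entirely in the previous step: one must match the left and right actions of the Harish--Chandra center on differential operators on $G/N$, $\rho$-shift and all; everything downstream is bookkeeping with a fixed bimodule.
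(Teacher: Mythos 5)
Your argument is correct and is essentially the paper's own, which offers no written proof beyond declaring the lemma ``evident from the constructions'': the Harish--Chandra compatibility you isolate (the image of $\fZ\fg$ under $\fU\fg\to\widetilde\cD_\cB$ agreeing, via the $\rho$-shifted embedding $\fZ\fg\subset\fU\fh$, with the central $\fU\fh$-action from the right $H$-action) is exactly what the paper's earlier presentation $\widetilde\cD_\cB\simeq\D_\cB\otimes_{\fZ\fg}\fU\fh$ and the preceding ``Linearity'' remarks encode. Your additional bookkeeping --- the bimodule formulation and the passage to $\gamma_*$ and $\gamma^!$ via continuity of right adjoints or the duality equivalences --- is a faithful expansion of the same route, not a different one.
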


\begin{proof}
This follows from the compatibility between the action $\fU\fh\to \Gamma(\wt{\cD_\cB})$ and the Harish-Chandra homomorphism $\fZ\fg\hookrightarrow \fU\fh$,
see~\cite[Lemma 3.1.5]{BMR1}.
\end{proof}

%%%

\subsection{Beilinson-Bernstein localization and the nil-Hecke algebra}\label{nil section}
In this section we describe how one can easily apply Barr-Beck-Lurie to identify representations of $\fg$ with modules over the nil-Hecke algebra inside globally generated monodromic $\cD$-modules on $\wt{\cB}$. This realization won't be used in what follows, where we apply Barr-Beck-Lurie in the opposite direction.

Let $$\wt{\fU\fg}=\fU\fg \otimes_{\fZ\fg} \fU\fh$$ denote the extended enveloping algebra. 

We will use the following proposition of Mili\v ci\'c describing monodromic differential operators on the basic affine space:
\begin{prop}\label{milicic}~\cite[Lemma 3.1]{milicic1},~\cite[Theorem C.6.5]{milicic2}
There is an equivalence 
$$
\gamma_*(\cD_{\widetilde \cB} ) \simeq \wt{\fU\fg}
$$ 
i.e. we have an isomorphism of global sections
$$\Gamma(G/N,\cD_{\widetilde\cB})^H=\Gamma(G/B, \wt{\cD})\simeq \wt{\fU\fg}$$
and vanishing of higher cohomologies
$$R^i\Gamma(G/B, \wt{\cD})=0 \hskip.3in (i>0).$$
\end{prop}

\begin{prop} \label{global sections BB}
Localization and global sections induce an equivalence
$$\cD_\mon(\widetilde \cB)^{glob}\simeq \wt{\fU\fg}\module,$$
where the left hand side is the category of {\em globally generated} monodromic $\cD$-modules, i.e., the category generated by the sheaf of differential operators.
\end{prop}

\begin{proof}
The functor $\gamma_*$ has both left and right adjoints. Thus to apply the Barr-Beck-Lurie theorem (in the monadic form) we just need to ensure conservativity. This can be always be achieved formally in the setting of stable categories by killing the objects that are sent to zero by the given functor, i.e., passing to the left orthogonal of the subcategory of $\gamma_*$-null objects. Equivalently, we pass to the category generated by the image of the left adjoint $\gamma^*$ -- in other words the category of globally generated objects. Finally the monad $\gamma_*\gamma^*$ itself is identified by Proposition~\label{milicic} with the extended enveloping algebra, as desired.
\end{proof}

The nil-Hecke algebra of Kostant and Kumar $\BH$ is the subalgebra of endomorphisms of $\C[\fh^\ast]$ generated by $Sym(\fh)=\C[\fh^\ast]$ and the Demazure, or divided-difference, operators associated to simple reflections $\sigma_i\in W$ with associated simple roots $\alpha_i$
$$A_i=(1-\sigma_i)/\alpha_i.$$ 
It has the interpretation as the algebra which controls descent along the map $$\fz:\fh^\ast\to \fh^\ast//W$$ from the dual Cartan to its coarse quotient by the Weyl group: namely, there is an equivalence of categories $\cQ(\fh^\ast//W)\simeq \cQ(\fh^\ast)^{\BH}$ (see e.g.~\cite{Lonergan}, which proves the corresponding statement for the coarse quotient of the reflection representation by any Coxeter group).
Hence applying the descent to $\fU\fg$-modules we can identify them with $\BH$-modules in $\wt{\fU\fg}$-modules, and hence from Proposition~\ref{global sections BB} we find the following:

\begin{corollary} There is an equivalence of categories $$(\cD_\mon(\widetilde\cB)^{glob})^\BH\simeq \fU\fg\module$$
where the nil-Hecke algebra acts on the left hand side through its $\fU\fh$-linearity.
\end{corollary}

\subsection{Symmetries} 
We next introduce  quantum analogues of the previously encountered Hamiltonian $G$-actions.
Following \cite{BD,FG},
by a
 {\em de Rham
  $G$-category}, we mean a dg category with an algebraic $G$-action  that is   
 infinitesimally trivialized, or in other words, the induced action of the formal group of $G$ is trivialized.
This can be formalized by saying that a dg category is a 
module for the monoidal
dg category $\D(G)$ of $\D$-modules on $G$ under  convolution.

The primary examples are the dg category $\D(X)$ of $\D$-modules on a $G$-variety
$X$, and the dg category $\fU\fg\module$ with its conjugation $G$-action. 
%\begin{remark}
These can be unified by  considering more generally $\D(G)$-modules of the form $\D_{G'\text{\it -mon}}(X)$
where  $X$ is a  $G\times G'$-variety, and we take weakly $G'$-equivariant $\D$-modules. In particular, for $G = X = G'$ with $G$ acting on the left and $G'$ on the right, we have the $\D(G)$-linear equivalence $\fU\fg\module \simeq \D_{G'\text{\it -mon}}(G)$. (Since the action is free and transitive, we can trivialize the underlying quasi-coherent sheaf of a weakly equivariant $\D$-module, and then realize invariant sections as a module over invariant vector fields.)
Informally speaking, this is a quantum analogue of the identification $\fg^*\simeq (T^*G)/G$.
Similarly, $\cD_\mon(\widetilde \cB)$ comes equipped with a natural $\D(G)$-module structure.
%\end{remark}

\begin{remark} Within the stable setting, there are two equivalent dual  formulations of a de Rham $G$-category.
By definition, we have taken de Rham $G$-category to mean a module for the monoidal dg category $\D(G)$
of $\D$-modules on $G$ under convolution. But one can observe that for $X$ a smooth variety, $\D(X)$ is dualizable as a plain dg category. Furthermore, 
it is self-dual so that
for maps $f:X\to Y$ of smooth varieties, 
pullback is dual to pushforward (by the projection formula). Thus a de Rham $G$-category could equivalently
be taken to mean a comodule for $\D(G)$ equipped with its coconvolution coalgebra structure.
\end{remark}

 The following is evident from the constructions.

\begin{lemma}
The adjunctions 
$$
\xymatrix{
\gamma^*: \fU\fg\module\ar@<0.7ex>[r] &\ar@<0.7ex>[l]\cD_\mon(\widetilde \cB):\gamma_*
&
\gamma_*: \cD_\mon(\widetilde \cB)\ar@<0.7ex>[r] &\ar@<0.7ex>[l]\fU\fg\module:\gamma^!
}
$$ 
are naturally $\D(G)$-linear.
\end{lemma}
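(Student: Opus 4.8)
The statement to prove is that the localization adjunctions $(\gamma^*, \gamma_*)$ and $(\gamma_*, \gamma^!)$ are naturally $\D(G)$-linear — i.e., compatible with the smooth $G$-category structures on $\fU\fg\module$ (via the conjugation action) and on $\cD_\mon(\widetilde\cB)$ (via the left $G$-action on $\widetilde\cB = G/N$).

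My plan is as follows. First I would pin down the $G$-actions precisely: on $\widetilde\cB = G/N$, the group $G$ acts by left translation, and this is an \emph{algebraic} $G$-action on the variety, so $\cD_\mon(\widetilde\cB)$ acquires a $\D(G)$-module structure by the general push-pull construction for $\D$-modules on a $G$-variety (this is the primary example cited just before the lemma). On the $\fU\fg\module$ side, the relevant structure is the conjugation action; using the $\D(G)$-linear identification $\fU\fg\module \simeq \D_{G'\text{-}\mon}(G)$ mentioned in the excerpt, this too is realized geometrically. The key point is that the embedding $\fU\fg \hookrightarrow \Gamma(\widetilde\cB, \widetilde\cD_\cB)$ of $H$-invariant differential operators, which underlies the adjunction $(\gamma^*, \gamma_*)$, is induced by the left $G$-action on $\widetilde\cB$ and hence is visibly $G$-equivariant at the level of the underlying geometry. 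Concretely, $\gamma^*(M) = \cD_{\widetilde\cB}\otimes_{\fU\fg} M$ is a pushforward-type construction along $pt/G \to \widetilde\cB/G$ (or rather the relative de Rham / induction functor), and such functors are manifestly $\D(G)$-linear because they arise from morphisms of $G$-stacks.

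The cleanest route, which I would take, is to realize $\gamma^*$ as a composition of standard $\D$-module operations along a correspondence of $G$-varieties. Namely, $\widetilde\cB = G/N$ and the point $pt$ both carry $G$-actions (left translation, trivial respectively), and the map $G/N \to pt$ together with the quotient $G \to G/N$ assemble into a correspondence whose induced $\D$-module integral transform, after taking $H$-monodromic structure into account, computes the localization functor $\gamma^*$. Since the formalism of \cite{BD, FG} guarantees that pullback, pushforward, and tensor operations for $\D$-modules along $G$-equivariant maps are canonically $\D(G)$-linear (they are computed in the category of $\D$-modules on quotient stacks $Y/G$, which is exactly the category of $\D(G)$-modules), the linearity of $\gamma^*$ follows formally. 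The right adjoint $\gamma_* = \Hom(\cD_{\widetilde\cB}, -)$ is then automatically $\D(G)$-linear: a right adjoint of a $\D(G)$-linear functor between $\D(G)$-module categories is $\D(G)$-linear whenever $\D(G)$ is rigid (which it is, being $\D$-modules on a group), since one can transport the module structure across the adjunction. Finally, $\gamma^!$, being the right adjoint of $\gamma_*$ (whose existence and identification with $\gamma^*$ was established in the preceding proposition), is $\D(G)$-linear by the same rigidity argument — and the identification $\gamma^! \simeq \gamma^*$ is then an identification of $\D(G)$-linear functors because it was constructed via the duality equivalences, which are themselves $G$-equivariant (Verdier duality commutes with $G$-equivariance).

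The main obstacle, and the only place requiring genuine care rather than invocation of formalism, is checking that the bimodule $\widetilde\cD_\cB \simeq \D_\cB \otimes_{\fZ\fg} \fU\fh$ — equivalently the sheaf $\cD_{\widetilde\cB}$ with its $(\fU\fg, \cD_{\widetilde\cB})$-bimodule structure — carries its $G$-equivariance in a way compatible with \emph{both} the left $\fU\fg$-action (through which the conjugation $G$-action on $\fU\fg\module$ is matched) and the geometric left $G$-action on $\widetilde\cB$. This is precisely the statement that the map $\fU\fg \to \Gamma(\widetilde\cB, \widetilde\cD_\cB)$ intertwines the conjugation action on the source with the action induced by left translation on the target, which is a classical and standard fact (it is what makes Beilinson-Bernstein $G$-equivariant in the first place), so I would simply cite \cite{BD} for it. Everything else is bookkeeping: the statement that all the functors in sight are computed by $G$-equivariant geometric operations, hence live over $pt/G$, hence are $\D(G)$-linear, and that the adjunctions are adjunctions internal to $\D(G)$-modules.
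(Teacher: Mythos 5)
Your proposal is correct, and in fact it supplies more than the paper does: the paper offers no argument for this lemma at all, simply prefacing it with ``the following is evident from the constructions.'' What the authors have in mind is exactly what you spell out --- the kernel $\cD_{\widetilde\cB}$, viewed as a $(\fU\fg,\cD_{\widetilde\cB})$-bimodule, is $G$-equivariant in a way compatible with both the conjugation action on $\fU\fg$ and the left-translation action on $\widetilde\cB=G/N$, so $\gamma^*$ is a $G$-equivariant geometric operation and the adjunctions live over $\D(G)$. Your route to the linearity of the right adjoints via rigidity of $\D(G)$ (lax module structures on continuous right adjoints of module functors are strict) is a standard and perfectly valid way to make the assertion precise; just note that this argument needs $\gamma_*$ and $\gamma^!$ to be \emph{continuous}, which is exactly what the preceding proposition of the paper establishes ($\gamma_*$ is continuous and proper, hence $\gamma^!$ exists and is continuous), so you should cite that explicitly rather than leave it implicit. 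The one point you flag as requiring care --- that $\fU\fg\to\Gamma(\widetilde\cB,\widetilde\cD_\cB)$ intertwines conjugation with the action induced by left translation --- is indeed the only non-formal input, and citing the classical equivariance of Beilinson--Bernstein localization (as in \cite{BD,FG}) for it is appropriate.
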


Consider the stack $\widetilde Z = N\bs G/N$.
Let $\D_{\bimon}(\widetilde Z)$ denote the dg category of
$H\times H$-weakly equivariant $\D$-modules on $\widetilde Z$.
 Informally speaking, $\D_{\bimon}(\widetilde Z)$ consists of noncommutative modules on the  Grothendieck-Steinberg stack $G\bs (\widetilde \fg \times_{\fg} \widetilde\fg)$.

Convolution equips $\D_{\bimon}(\widetilde Z)$ with a natural
monoidal structure, and $\D_\mon(\widetilde \cB)$ with a natural right
$\D_{\bimon}(\widetilde Z)$-module structure commuting with its
natural left $\D(G)$-module structure. This is a quantum analogue
of the convolution pattern for  sheaves on $\gtil\times_{\fg} \widetilde \fg$ acting by integral transforms on sheaves on $\gtil$ respecting the Hamiltonian
$G$-structure. The following is  the quantum analogue of the  result quoted from \cite{BFN} earlier that such integral transforms
are precisely the symmetries respecting  the Hamiltonian
$G$-structure.

\begin{thm} Convolution  provides a monoidal  equivalence
$$ \xymatrix{ \Phi: \D_{\bimon}(\widetilde Z)\ar[r]^-\sim
    & \End_{\D(G)}(\cD_\mon(\widetilde \cB)) }
$$
\end{thm}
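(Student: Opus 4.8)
The plan is to reduce this quantum statement to its classical counterpart already established via \cite{BFN}, by exhibiting $\D(G)$-linear endofunctors of $\cD_\mon(\widetilde\cB)$ as integral transforms on a fiber product, and then identifying that fiber product with $\widetilde Z$. Concretely, recall $\cD_\mon(\widetilde\cB)$ carries a $\D(G)$-module structure coming from the left $G$-action on $\widetilde\cB = G/N$, and by the discussion above we have the $\D(G)$-linear identification $\fU\fg\module \simeq \D_{G'\text{\it-mon}}(G)$. The first step is to observe that $\cD_\mon(\widetilde\cB) \simeq \D_{H\text{\it-mon}}(G)$ as a $\D(G)$-module, where $G$ acts on the left and $H$ acts on the right via $G \to G/N$. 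So $\cD_\mon(\widetilde\cB)$ is an example of the general class $\D_{G'\text{\it-mon}}(X)$ with $X = G$, $G' = H$.

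The second and main step is to compute $\End_{\D(G)}(\D_{H\text{\it-mon}}(G))$. Here I would invoke the theory developed in \cite{character} (cited in the excerpt for exactly this kind of statement, and also underlying Theorem~\ref{Hecke Morita}): for a $G\times G'$-variety $X$, the $\D(G)$-linear endofunctors of $\D_{G'\text{\it-mon}}(X)$ are given by the "Hecke-type" category of $G$-equivariant, $G'\times G'$-weakly-equivariant $\D$-modules on $X \times X$, acting by convolution (the quantum analogue of the $\cQ(G\backslash(\gtil\times_\fg\gtil))$ computation from the classical descent pattern section). Specializing to $X = G$, $G' = H$: a $G$-equivariant $\D$-module on $G\times G$ is the same as a $\D$-module on $G\backslash(G\times G) \simeq G$ (via $(g_1,g_2)\mapsto g_1^{-1}g_2$), and the residual $H\times H$ weak equivariance — coming from the right $H$-actions on the two factors, which under this identification become left and right $H$-actions on $G$ — yields $\D_{\bimon}(N\bs G/N) = \D_{\bimon}(\widetilde Z)$. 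One checks the convolution (integral transform) structure on kernels matches the monoidal structure on $\D_{\bimon}(\widetilde Z)$, giving the monoidal equivalence $\Phi$. That the functor is $\Phi_\cK(\cM) = p_{2*}(\cK \overset{!}{\otimes} p_1^*\cM)$ in the appropriate $\D$-module sense is then a formal consequence, as in the classical case.

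The hard part will be Step~2: making precise and citing (or proving) the claim that $\D(G)$-linear endofunctors of $\D_{G'\text{\it-mon}}(X)$ are exactly integral kernels on $X\times X$ with the stated equivariance. This is the quantum/$\D$-module analogue of the $\qc(G\backslash(\gtil\times\gtil))$ statement, and the subtlety is that $\D$-modules on a variety are self-dual (via Verdier duality) but one must be careful that the relevant categories are dualizable as $\D(G)$-modules and that the "smooth" (infinitesimally trivialized) condition is exactly what makes the tensor-over-$\D(G)$ computation behave like the classical tensor-over-$\cQ(G)$ one. I expect this to follow from the general machinery of \cite{character} together with the observation that $\D_{G'\text{\it-mon}}(X) \simeq \D(X) \otimes_{\D(G')} \Vect$ (coinvariants for the weak $G'$-action), so that
$$
\End_{\D(G)}(\D_{G'\text{\it-mon}}(X)) \simeq \bigl(\End_{\D(G)}(\D(X))\bigr)^{G'\times G'\text{\it-mon}} \simeq \D_{G'\times G'\text{\it-mon}}(G\backslash(X\times X)),
$$
using $\End_{\D(G)}(\D(X)) \simeq \D(G\backslash(X\times X))$. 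The remaining steps — identifying $G\backslash(G\times G)$ with $G$ and chasing through that the doubled right $H$-action becomes the bimonodromic structure on $N\bs G/N$, and verifying $\Phi$ is monoidal for convolution and sends the identity to the diagonal pushforward $\D_{\widetilde Z}$ (the universal Weyl sheaf, up to the structure that will be discussed) — are routine bookkeeping parallel to the classical treatment in Section~\ref{classical} and need not be spelled out in detail.
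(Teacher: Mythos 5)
Your skeleton is the paper's: both arguments reduce to the kernel theorems of \cite{character} together with the identification $\widetilde Z \simeq G\bs (\widetilde \cB\times\widetilde\cB)$, differing only in the order of the two descent steps. The paper first applies the absolute (non-$G$-linear) statement $\End(\cD_\mon(\widetilde \cB))\simeq \D_{\bimon}(\widetilde\cB\times\widetilde\cB)$ from \cite{character}, and then computes $\D(G)$-linear endofunctors as $G$-invariants by descent along $pt\to BG$, i.e.\ as comodules for the coalgebra $\cO_G\in\D(G)$ on both sides; $G$-equivariant bimonodromic kernels on $\widetilde\cB\times\widetilde\cB$ are then precisely $\D_{\bimon}(\widetilde Z)$. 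Your reordering (first a $\D(G)$-linear kernel theorem, then the weak $H$-structure) could be made to work, but the step you yourself flag as the hard one is written incorrectly: the formula $\D_{G'\text{\it -mon}}(X)\simeq \D(X)\otimes_{\D(G')}\Vect$ is not the monodromic category. Tensoring over $\D(G')$ (de-equivariantization in the $\D$-module direction) produces \emph{strongly} equivariant objects --- in the case at hand it would collapse $\D_\mon(G/N)$ to a $\D(G/B)$-type category --- whereas weak equivariance is descent in the quasicoherent direction, i.e.\ (co)invariants for $\QC(H)$, equivalently comodules for $\cO_H$. Since the entire theorem lives on the distinction between weak and strong equivariance, your displayed chain of equivalences does not, as written, compute the monodromic Hecke category; this is the gap to repair (also note $H$ does not act on $G$: the precise structure is the right $B$-action with strong $N$- and weak $H$-equivariance, which is what you actually use).

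A second, smaller error: the parenthetical claim that $\Phi$ sends the identity functor to the diagonal pushforward ``$\D_{\widetilde Z}$, the universal Weyl sheaf'' conflates two different kernels. The unit of convolution on $\D_{\bimon}(\widetilde Z)$ is the monodromic unit supported on the diagonal $N\bs B/N\subset N\bs G/N$, the quantum analogue of $\Delta_{\widetilde\fg *}\cO_{\widetilde \fg}$ from Section~\ref{classical}; the universal Weyl sheaf $\cW=\D_{\widetilde Z}$ is instead the kernel of the nontrivial descent monad $T=\gamma^!\gamma_*$ (Theorem~\ref{thm weyl kernel}). If $\cW$ were the unit, the localization statement would be vacuous. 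This does not affect the bare assertion that $\Phi$ is a monoidal equivalence, but it should be corrected before you use the unit in any monoidality check.
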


\begin{proof}
Let us begin by forgetting the $\D(G)$-module structure of $\cD_\mon(\widetilde \cB)$. Then by  a monodromic version of~\cite[Theorem 1.14]{character}, we have a monoidal equivalence
$$ \xymatrix{ \Phi': \D_{\bimon}(\widetilde \cB \times\widetilde\cB)\ar[r]^-\sim
    & \End(\cD_\mon(\widetilde \cB)) 
    & 
\Phi'({\cK})(-) = p_{2*}(\cK \otimes p_1^*(-))
}
$$
where $p_1, p_2 : \widetilde \cB \times\widetilde\cB \to \widetilde \cB$ denote the projections. 

Returning $\D(G)$-module structures
to the picture, $\Phi'$ is evidently $\D(G)$-linear by standard identities.
Moreover, $\D(G)$-linear endomorphisms of $\cD_\mon(\widetilde \cB)$ are simply the invariants
$$ \xymatrix{ 
\End_{\D(G)}(\cD_\mon(\widetilde \cB))  = \Hom_{\D(G)}(\D(pt), \End(\cD_\mon(\widetilde \cB)) 
}
$$
By descent along $pt\to BG$, the invariants can be calculated as comodules 
$$ \xymatrix{ 
 \Hom_{\D(G)}(\D(pt), \End(\cD_\mon(\widetilde \cB)) \simeq 
 \End(\cD_\mon(\widetilde \cB))^{\cO_G}
}
$$
for the canonical coalgebra $\cO_G\in \D(G)$ given by the structure sheaf.

On the other hand, by another application of descent, $\cO_G$-comodules in  $\D_{\bimon}(\widetilde \cB \times\widetilde\cB)$ are precisely $G$-equivariant bimonodromic $\D$-modules on $\widetilde \cB \times\widetilde\cB$. Now the theorem follows from the identification  $\widetilde Z \simeq G\bs (\widetilde \cB \times\widetilde\cB)$.
\end{proof}

%% Now consider the stack $Z = B\bs G/B$.

%% Let $\D( Z)$ denote the stable $\oo$-category
%% of $\D$-modules on $Z$.
%%  Convolution
%% equips $\D( Z)$ with  a natural monoidal structure, and $\D(B\bs \cB)$ with a natural  
%% left $\D( Z)$-module structure commuting with its natural right  $\D_{H\times H}(\widetilde Z)$-module structure.

%% Convolution  provides an anti-monoidal  equivalence
%% $$
%% \xymatrix{
%% \Phi: \D_{H\times H}(\widetilde Z)\ar[r]^-\sim &  \End_{\D( Z)}(\cD_\mon(B\bs \widetilde \cB))
%% }
%% $$

%% \begin{remark}
%% In a simpler world, the analogue of the above assertions would hold
%% for any subgroup of $G$, not only for the identity or a Borel
%% subgroup, but in particular for $G$ itself.  But in general, while the
%% monoidal stable $\oo$-category $\D_{H\times H}(\widetilde Z)$ will
%% map surjectively to linear endomorphisms, a failure of Tannakian
%% formalism for $\D$-modules will result in the map having a nontrivial
%% kernel if the subgroup is too large.
%% \end{remark}

%%%%%%%%%%%%%%%%%%%%%%%%%%%%%%%%%%%%%%%%%%%
%%%%%%%%%%%%%%%%%%%%%%%%%%%%%%%%%%%%%%%%%%%

\subsection{Universal Weyl sheaf}
Recall the  adunctions
$$
\xymatrix{
\gamma^*: \fU\fg\module\ar@<0.7ex>[r] &\ar@<0.7ex>[l]\cD_\mon(\widetilde \cB):\gamma_*
&
\gamma_*: \cD_\mon(\widetilde \cB)\ar@<0.7ex>[r] &\ar@<0.7ex>[l]\fU\fg\module:\gamma^!
}
$$ 
with ambidextrous identification $\gamma^!\simeq\gamma^*$.

Let $T =\gamma^! \gamma_* $ denote the resulting monad, or in other words, algebra object in the monoidal category of endomorphisms $ \End_{\D(G)}(\cD_\mon(\widetilde \cB))$.  
Likewise, let $T^\vee =\gamma^* \gamma_*$ denote the resulting comonad.

 \begin{lemma}
 $\gamma^!$  is conservative, and hence $\gamma^*$ is as well.
 \end{lemma}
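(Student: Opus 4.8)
The plan is to establish conservativity of $\gamma^!$ directly, and then deduce conservativity of $\gamma^*$ from the ambidextrous identification $\gamma^! \simeq \gamma^*$ — once one of them is conservative, so is the other, since they are the same functor. So the real content is a single conservativity statement for the adjunction between $\fU\fg\module$ and $\cD_\mon(\widetilde \cB)$. Recall that $\gamma^!$ is right adjoint to $\gamma_*$, i.e.\ $\gamma^!$ plays the role of a ``sections'' functor for the Grothendieck duality adjunction $(\gamma_*, \gamma^!)$; under the ambidexterity it is also identified with $\gamma^* = \cD_{\widetilde\cB}\otimes_{\fU\fg}(-)$, the localization functor.

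The cleanest route is to work with the localization description $\gamma^* : M \mapsto \cD_{\widetilde\cB}\otimes_{\fU\fg} M$. First I would reduce to checking that $\gamma^*$ is conservative, i.e.\ that no nonzero $\fU\fg$-module $M$ has $\cD_{\widetilde\cB}\otimes_{\fU\fg} M \simeq 0$. Equivalently, using the $\widetilde\cD_\cB$-module presentation $\widetilde\cD_\cB \simeq \cD_\cB \otimes_{\fZ\fg}\fU\fh$ from earlier in the text, one computes $\gamma^*$ as tensoring up a $\fU\fg$-module to $\widetilde\cD_\cB$; the key point is that $\widetilde\cD_\cB$ is \emph{faithfully flat} as a right $\fU\fg$-module. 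This is the standard fact underlying Beilinson–Bernstein: the natural map $\fU\fg \to \Gamma(\cB, \widetilde\cD_\cB)$ realizing central and enveloping operators exhibits $\widetilde\cD_\cB$ as faithfully flat over $\fU\fg$ (it is already flat because $\widetilde\cD_\cB$ is locally free as an $\cO_\cB$-module and the morphism $\widetilde\fg \to \fg$ is flat on the level of the relevant filtrations / symbol algebras; faithfulness comes from surjectivity of $\widetilde\fg \to \fg$ on points, equivalently that every maximal ideal of $\fU\fg$ survives). Given faithful flatness, $\cD_{\widetilde\cB}\otimes_{\fU\fg}^{\mathbf L} M \simeq 0 \Rightarrow M \simeq 0$ is immediate, so $\gamma^*$ is conservative, hence $\gamma^! \simeq \gamma^*$ is conservative.

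Alternatively, and perhaps more in the derived-geometric spirit of the paper, one can argue on compact objects: by the Proposition just proved, $\gamma_*$ restricts to $\cD^c_\mon(\widetilde\cB) \to \fU\fg\perf$ compatibly with the duality equivalences, and conservativity of the continuous functor $\gamma^!$ on $\cD_\mon(\widetilde\cB) = \Ind(\cD^c_\mon(\widetilde\cB))$ follows from conservativity on compacts, which in turn is the statement that $\Hom(\gamma^! \cM, P) \neq 0$ for some compact $P$ whenever $\cM \neq 0$ — and by adjunction $\Hom(\gamma^! \cM, P) = \Hom(\cM, \gamma_* P)$, so one needs the objects $\gamma_* P$, $P \in \fU\fg\perf$, to generate $\cD_\mon(\widetilde\cB)$; this is again essentially the localization theorem's surjectivity input. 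Either way, the statement ``$\gamma^*$ is as well'' is a one-line consequence of the ambidextrous identification $\gamma^! \simeq \gamma^*$ recalled at the start of the subsection.

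\textbf{Main obstacle.} The only non-formal ingredient is the faithful flatness of $\widetilde\cD_\cB$ (equivalently $\cD_{\widetilde\cB}$) over $\fU\fg$, i.e.\ that $\mu_{\widetilde\fg}:\widetilde\fg\to\fg$ is faithfully flat at the quantized level — this is precisely the geometric fact that the Grothendieck–Springer map is surjective (noted in the text: it is ``projective, generically finite and $G$-equivariant'', and dominant) combined with flatness of $\mu_{\widetilde\fg}$. I expect the authors to invoke this as an instance of ``the general localization formalism'' — indeed the introduction already flags conservativity of $\Delta$ as ``an easy consequence of the general localization formalism'' — so in the write-up this should be a short appeal to that formalism together with the observation that $\gamma^!\simeq\gamma^*$ forces the two conservativity statements to coincide.
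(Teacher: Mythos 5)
Your main route rests on a claim that is false, not merely unproved: neither $\widetilde\cD_\cB$ nor $\cD_{\widetilde\cB}$ is (faithfully) flat over $\fU\fg$, and the geometric input you cite in support fails as well. The Grothendieck--Springer map $\mu_{\widetilde\fg}:\widetilde\fg\to\fg$ is projective, surjective and generically finite, but it is \emph{not} flat: its fiber over $0\in\fg$ is the whole flag variety $\cB$, while its fibers over regular elements are finite, so fiber dimension jumps and no symbol/filtration argument can produce flatness of the sheaf kernel over $\fU\fg$. The quantized statement fails too, which is exactly why everything in this paper is derived: monodromic localization $\gamma^*=\cD_{\widetilde\cB}\otimes_{\fU\fg}(-)$ is not $t$-exact. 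Already for $G=SL_2$, since $\cD_{\widetilde\cB}$ is flat over $\fZ\fg$, the derived localization of the trivial module $\C$ is computed by tensoring over the central quotient $\fU\fg\otimes_{\fZ\fg}\C_{[0]}$ against $\cD_{\widetilde\cB}\otimes_{\fZ\fg}\C_{[0]}\simeq\cD_0\oplus\cD_{-2}$, and the non-dominant summand gives $\cD_{-2}\otimes^{\mathbf L}\C\simeq\Omega^1_{\Pp^1}[1]$ (its derived global sections must return $\C$, which sits in $H^1(\Pp^1,\Omega^1)$); hence $\Tor^{\fU\fg}_{1}(\cD_{\widetilde\cB},\C)\neq 0$. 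So conservativity of $\gamma^*$ cannot be obtained from flatness of the kernel. Your fallback paragraph does not repair this: there the adjunctions are reversed ($\gamma^!$ has source $\fU\fg\module$ and is the \emph{right} adjoint of $\gamma_*$, so ``conservativity of $\gamma^!$ on $\cD_\mon(\widetilde\cB)$'' and the objects ``$\gamma_*P\in\cD_\mon(\widetilde\cB)$'' do not parse), nonvanishing in a compactly generated category is detected by maps \emph{out of} compact generators, and generation of $\cD_\mon(\widetilde\cB)$ by localized modules would prove conservativity of $\gamma_*$, not of $\gamma^!\simeq\gamma^*$; in any case it defers the real content to an unproved ``surjectivity input,'' so it is circular.

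What is true, and what the paper actually uses, is faithful flatness of the \emph{global sections}, not of the sheaf: $\gamma_*(\cD_{\widetilde\cB})\simeq\fU\fg\otimes_{\fZ\fg}\fU\fh$ is free of rank $|W|$ over $\fU\fg$, since $\fU\fh=\cO(\fh^*)$ is free over $\fZ\fg=\cO(\fh^*\git W)$. The paper then tests $\gamma^!(M)$ against the compact generator $\cD_{\widetilde\cB}$: by adjunction $\Hom(\cD_{\widetilde\cB},\gamma^!(M))\simeq\Hom_{\fU\fg}(\fU\fg\otimes_{\fZ\fg}\fU\fh,M)$, and the canonical integration ($W$-averaging) retraction of $\fU\fg\to\fU\fg\otimes_{\fZ\fg}\fU\fh$ exhibits $M$ itself as a retract of this Hom, so $M\not\simeq 0$ forces $\gamma^!(M)\not\simeq 0$; the statement for $\gamma^*$ then follows from $\gamma^!\simeq\gamma^*$, exactly as you say. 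If you want to keep your tensor-side formulation, the correct substitute for sheaf flatness is the base-change identification $\gamma_*\gamma^*(M)\simeq(\fU\fg\otimes_{\fZ\fg}\fU\fh)\otimes_{\fU\fg}M$ (both sides continuous in $M$ and agreeing on $\fU\fg$), after which freeness of $\fU\fg\otimes_{\fZ\fg}\fU\fh$ over $\fU\fg$ and the same retraction finish the argument.
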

 
 \begin{proof}
From Proposition~\ref{milicic}
we find
$$
\Hom(\cD_{\widetilde \cB},  \gamma^!(M )) \simeq \Hom( \fU\fg \otimes_{\fZ\fg} \fU\fh, M)
$$
Since $\fU\fh$ is free (of rank $|W|$) over $\fZ\fg$, we conclude that this Hom space vanishes if and only if $M\simeq 0$, hence $M \not \simeq 0$ implies $\gamma^!(M) \not \simeq 0$.
 \end{proof}
 
Since $\gamma^!$ is continuous and conservative and 
 $\gamma^*$ is cocontinuous and conservative,  the Barr-Beck theorem provides canonical identifications
$$
\xymatrix{
 \Mod_T (\cD_\mon(\widetilde \cB)) \simeq \fU\fg\module \simeq \on{Comod}_{T^\vee} (\cD_\mon(\widetilde \cB))
}
$$

We would like to explicitly describe the integral kernel giving rise to $T \simeq T^\vee$ under the equivalence
$$
\xymatrix{
\Phi: \D_{\bimon}(\widetilde Z)\ar[r]^-\sim &  \End_{\D(G)}(\cD_\mon(\widetilde \cB))
}
$$

\begin{defn}
The {\em universal Weyl sheaf} $\cW \in \D_{\bimon}(\widetilde Z)$ is the sheaf of differential operators on $\widetilde Z$
with its canonical $H\times H$-weakly equivariant structure.
\end{defn}

\begin{remark}  Let us spell out this definition. By quantum Hamiltonian reduction (and under the
identification $\widetilde Z \simeq G\bs (G/N \times G/N)$), the
pullback of $\cW$ along the natural quotient map
$$
\xymatrix{
r: G/N \times G/N  \ar[r] & N\bs G/N
}$$ 
 is the $G$-strongly equivariant 
$\D$-module
$$
r^*\cW = \D_{G/N \times G/N}/(\fg)
$$ where $(\fg) \subset \D_{G/N \times G/N}$ is the left ideal
generated by vector fields arising from the diagonal $G$-action on
$G/N \times G/N$.  Thus $\cW$ is the quantum analogue of the structure
sheaf of $G\bs (\gtil\times_\fg\gtil)$.
\end{remark}

%% By quantum Hamiltonian reduction, the pullback of $\cW$ along the
%% natural quotient map
%% $$
%% \xymatrix{
%% q: G/N \ar[r] &  N\bs G/N
%% }$$ 
%% is the $N$-strongly equivariant 
%% $\D$-module
%% $$
%% q^*\cW = \D_{G/N}/(\fn)
%% $$
%% where $(\fn) \subset \D_{G/N}$ is the left ideal generated by vector fields arising from the left $N$-action on $G/N$.

\begin{remark}
One can write down an explicit Frobenius algebra structure on $\cW$ but its construction is an explicit unwinding of that given by the adjunctions of Theorem~\ref{thm weyl kernel} below.
\end{remark}

In parallel with the commutative case, the identification of $\cW$ as the
 integral kernel giving rise to $T \simeq T^\vee$
 can be viewed as a microlocal version of $G$-equivariant base change along the diagram
$$ \xymatrix{ &
   \ar[dl]^-{p_{1*}}\D_{H\times H}(\widetilde \cB \times \widetilde
  \cB) & \\  \D_\mon(\widetilde \cB)
  && \ar[ul]^-{p_2^*}\D_\mon(\widetilde \cB) \ar[dl]_-{\gamma_*} \\ & 
\ar[ul]_-{\gamma^*} \fU\fg\module & }
$$
This is formalized in the argument for the following assertion.

\begin{thm}\label{thm weyl kernel} The monoidal equivalence
$$
\xymatrix{
\Phi:\D_{\bimon}(\widetilde Z)\ar[r]^-\sim &  \End_{\D(G)}(\cD_\mon(\widetilde \cB))
}
$$ takes the universal Weyl sheaf $\cW$ to the endofunctor $T\simeq T^\vee$. Thus $\cW$ inherits the structures of algebra
and coalgebra in
$\D_{\bimon}(\widetilde Z)$, and we have equivalences
$$\ \D_\mon(\widetilde \cB)_{\cW}\simeq \fU\fg\module\simeq \D_\mon(\widetilde \cB)^{\cW}.$$
\end{thm}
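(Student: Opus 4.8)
{The plan is to identify $\cW$ with the integral kernel of $T \simeq T^\vee$ by a base-change computation, then conclude the equivalences formally from Barr-Beck.}

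First I would compute the kernel representing the comonad $T^\vee = \gamma^*\gamma_*$ under the equivalence $\Phi$ of Theorem~\ref{thm weyl kernel}. The strategy is to factor the composition $\gamma^*\gamma_*$ through the intermediate category $\D_{H\times H}(\widetilde\cB\times\widetilde\cB)$, as indicated by the diagram preceding the theorem statement. Concretely, $\gamma_*$ is global sections (taking $\cM$ to $\Hom(\cD_{\widetilde\cB},\cM)$ as a $\fU\fg$-module via the left $G$-action), and $\gamma^*$ is $\cD_{\widetilde\cB}\otimes_{\fU\fg}(-)$. The composite, as an endofunctor of $\D_\mon(\widetilde\cB)$, is given by an integral transform whose kernel is obtained by pushing and pulling $\cO$ (or $\cD$) along the correspondence $\widetilde\cB \leftarrow \widetilde\cB\times\widetilde\cB \to \widetilde\cB$ with the appropriate twist coming from the two $\gamma$'s. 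The key point is a $G$-equivariant base change identity: the kernel of $\gamma^*\gamma_*$ on $\widetilde\cB\times\widetilde\cB$ is the $G$-equivariant $\D$-module $\D_{\widetilde\cB\times\widetilde\cB}/(\fg)$ obtained by imposing the diagonal $G$-action relations, which is precisely $r^*\cW$ as recorded in the remark following the definition of $\cW$. Descending along $r:\widetilde\cB\times\widetilde\cB \to \widetilde Z = G\bs(\widetilde\cB\times\widetilde\cB)$ — equivalently, passing to $\cO_G$-comodules as in the proof of the theorem stating $\Phi:\D_{\bimon}(\widetilde Z)\risom\End_{\D(G)}(\cD_\mon(\widetilde\cB))$ — identifies the kernel with $\cW$ itself.

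The dual computation for the monad $T = \gamma^!\gamma_*$ proceeds identically using Grothendieck duality instead of pullback; since $\gamma^!\simeq\gamma^*$ ambidextrously, and this ambidexterity devolves (as in the commutative case) from the trivialization of the relative canonical bundle — here the fact that $\widetilde\cD_\cB \simeq \D_\cB\otimes_{\fZ\fg}\fU\fh$ and $\cD_{\widetilde Z}$ carry the evident Calabi-Yau structure — the two kernels coincide, both equal to $\cW$. Because $\Phi$ is monoidal, transporting the monad (resp. comonad) structure of $T$ (resp. $T^\vee$) through $\Phi\inv$ endows $\cW$ with an algebra (resp. coalgebra) structure in $\D_{\bimon}(\widetilde Z)$; the underlying-object agreement makes this a Frobenius structure in our informal sense. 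Finally, the equivalences $\D_\mon(\widetilde\cB)_\cW \simeq \fU\fg\module \simeq \D_\mon(\widetilde\cB)^\cW$ are immediate: by the Lemma asserting $\gamma^!$ is conservative (hence so is $\gamma^*$), together with continuity/cocontinuity of the adjoints established in the preceding Proposition, the Barr-Beck-Lurie theorem already gave $\Mod_T(\cD_\mon(\widetilde\cB)) \simeq \fU\fg\module \simeq \on{Comod}_{T^\vee}(\cD_\mon(\widetilde\cB))$, and under $\Phi$ these are exactly $\cW$-modules and $\cW$-comodules.

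The main obstacle is the base-change identification of the kernel of $\gamma^*\gamma_*$ with $r^*\cW = \D_{\widetilde\cB\times\widetilde\cB}/(\fg)$. This requires care because $\gamma_*$ is not simply a geometric pushforward but involves taking $H$-invariant global differential operators and the $\fU\fg$-module structure from the left $G$-action; one must check that the Beck-Chevalley-type square relating $(\gamma^*,\gamma_*)$ to the correspondence $p_1,p_2:\widetilde\cB\times\widetilde\cB\to\widetilde\cB$ commutes at the level of kernels, which is essentially the statement that quantum Hamiltonian reduction along the diagonal $G$ is compatible with the localization adjunction. The calculation $\gamma_*(\cD_{\widetilde\cB}) \simeq \fU\fg\otimes_{\fZ\fg}\fU\fh$ from the conservativity lemma is the seed of this, but propagating it to a kernel statement on $\widetilde\cB\times\widetilde\cB$ and then descending to $\widetilde Z$ is where the real work lies. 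Everything downstream — the algebra/coalgebra structures and the final equivalences — is then formal from monoidality of $\Phi$ and Barr-Beck.
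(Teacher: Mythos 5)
Your overall architecture matches the paper's: represent $T^\vee=\gamma^*\gamma_*$ (and $T=\gamma^!\gamma_*$) by a kernel under $\Phi$, identify that kernel with the quantum Hamiltonian reduction $r^*\cW=\D_{\widetilde\cB\times\widetilde\cB}/(\fg)$, descend along $r$ by $G$-equivariance, and then obtain the algebra/coalgebra structures from monoidality of $\Phi$ and the equivalences from the Barr--Beck identification already established (conservativity plus continuity/cocontinuity). All of that downstream part is indeed formal, as you say.

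However, the step you yourself flag as ``where the real work lies'' --- the identification of the kernel of $\gamma^*\gamma_*$ with $\D_{\widetilde\cB\times\widetilde\cB}/(\fg)$ --- is precisely the content of the theorem, and your proposal asserts it (as a ``$G$-equivariant base change identity'') rather than proving it; a Beck--Chevalley square is named but never verified, so there is a genuine gap at the crux. The paper closes it with a short idea that is missing from your write-up: from $T(\cM)\simeq \widetilde\cD_\cB\otimes_{\fU\fg}\Hom_{\widetilde\cD_\cB}(\widetilde\cD_\cB,\cM)$ one realizes $T$ as the $\fU\fg$-coinvariants (a relative tensor over $\fU\fg$, hence a colimit) of the ``free'' endofunctor $T'(\cM)=\widetilde\cD_\cB\otimes\Hom_{\widetilde\cD_\cB}(\widetilde\cD_\cB,\cM)$, whose kernel under $\Phi'$ is visibly the external product $\widetilde\cD_\cB\boxtimes\widetilde\cD_\cB$; since all functors in play are continuous, the $\fU\fg$-coinvariants may be taken on the kernel instead of on the output of the transform, and the coinvariants of $\widetilde\cD_\cB\boxtimes\widetilde\cD_\cB$ is exactly $r^*\cW=\D_{G/N\times G/N}/(\fg)$ as in the Remark; $G$-equivariance then descends everything to $\widetilde Z$. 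Supplying this (or an honest verification of your base-change square) turns your outline into a complete proof. One cosmetic caution: the ambidexterity $\gamma^!\simeq\gamma^*$ should be cited from the earlier Proposition (proved via the duality square on compact objects), rather than attributed to an unconstructed Calabi--Yau structure on $\cD_{\widetilde Z}$.
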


\begin{proof}
On the one hand, given an object $\cM\in \D_\mon(\widetilde \cB)$, we
find
$$
T(\cM) \simeq \gamma^*\gamma_*\cM \simeq \widetilde \cD_{\cB} \otimes_{\fU\fg} \Hom_{\widetilde \cD_{\cB}}(\widetilde \cD_{\cB}, \cM)
$$ Thus it is the $\fU\fg$-coinvariants of the intermediate functor
$$
\xymatrix{
T'(\cM) =  \widetilde \cD_{\cB} \otimes \Hom_{\widetilde \cD_{\cB}}(\widetilde \cD_{\cB}, \cM)
}$$ 
Moreover, under the identification
$$ \xymatrix{ \Phi': \D_{\bimon}(\widetilde \cB \times\widetilde\cB)\ar[r]^-\sim
    & \End(\cD_\mon(\widetilde \cB)) 
}
$$
observe that $T'$ corresponds to the integral kernel $\widetilde \D_{\cB} \boxtimes {\mathbb D}(\widetilde \D_{\cB})$. (Indeed the integral kernel $M\boxtimes N$ represents the functor $N\otimes \Hom({\mathbb D}M,-)$). Moreover, the Calabi-Yau structure on $G/N$ allows us to identify $\widetilde \D_{\cB}$ with its dual (with suitable $H$-monodromic structure).
On the other hand, as discussed above, $\cW$ is simply the
$\fU\fg$-coinvariants of  $\widetilde \D_{\cB}
\boxtimes \widetilde \D_{\cB}$. Thus since all functors are
continuous, taking $\fU\fg$-coinvariants can be equivalently performed
on the integral kernel or on the result of the integral transform.

All of the above arguments are manifestly $G$-equivariant and so descend to give the assertion. 
\end{proof}

%%%%%%%%%%%%%%%%%%%%%%%%%%%%%%%%%%%%%%%%%%%
%%%%%%%%%%%%%%%%%%%%%%%%%%%%%%%%%%%%%%%%%%%

\subsection{Specified infinitesimal character}\label{specified}

Recall that the commutative algebra $\fU\fh \otimes \fU\fh = \cO( \fh^*\times \fh^*)$ acts by central endomorphisms on $\D_{\bimon}(\widetilde Z)$.
The action factors through the closed subscheme $\Gamma\subset \fh^* \times \fh^*$ given by the union of the graphs of Weyl group elements
$$
\xymatrix{
\Gamma = \coprod_{w\in W} \Gamma_w & \Gamma_w = \{(\lambda, w\lambda) \in \fh^*\times \fh^*\}
}
$$

To better understand the universal Weyl sheaf  $\cW \in \D_{\bimon}(\widetilde Z)$,
let us restrict one of its monodromies and calculate its resulting fiber.
The composite projection  to either factor 
$$
\xymatrix{
\Gamma \ar@{^(->}[r] & \fh^*\times \fh^* \ar[r] & \fh^*
}
$$
is a finite flat map. All of what follows is symmetric in the two projections, so let us focus on the projection to the second factor.

First, let us identify the fibers of $ \D_{\bimon}(\widetilde Z)$ along the projection to the second factor. 
For simplicity, let us also forget the $H$-weak equivariance along the first factor,  i.e. consider $\D$-modules on $N\bs G/N$ which are $H$-weakly equivariant on the right. The fiber of this category over $\lambda \in \fh^*$ is canonically equivalent to
the dg category $\D_\lambda(N\bs \cB)$ of $N$-strongly equivariant $\lambda$-twisted $\D$-modules on the flag variety
$\cB$.
This in turn is the full subcategory of those
$\lambda$-twisted $\D$-modules on $\cB$ that are locally constant along Schubert cells. To recover the corresponding fiber of $ \D_{\bimon}(\widetilde Z)$ we should then reimpose weak $H$-equivariance on the right.

Now let us identify the corresponding fiber 
$$\cW_\lambda \in \D_\lambda(N\bs \cB)
$$ of the universal Weyl sheaf. This is a regular holonomic  $\lambda$-twisted $\D$-module on the flag variety $\cB$ locally constant along Schubert cells.

For concreteness, we will consider several specific cases: (1) $\lambda$ generic (regular and not at all integral), (2) $\lambda$ regular and integral,  and (3) $\lambda= 0$ trivial.

(1) When $\lambda$ is generic, we have a direct sum decomposition
$$
\xymatrix{
\cW_\lambda \simeq \oplus_{w\in W} \cW_{\lambda, w\lambda}
}$$
Each summand admits the description as a standard or equivalently costandard extension off of a Schubert cell
$$
\xymatrix{
\cW_{\lambda, w\lambda} \simeq j_{w!} \cO_{\lambda, w} \simeq  j_{w*} \cO_{\lambda, w}
}$$
where $j_{w}:\cB_w \to \cB$ is the inclusion of the $w$-Schubert cell for $w\in W$, and $\cO_{\lambda, w}$ is the 
$\lambda$-twisted structure sheaf of $\cB_w$.

(2) Suppose $\lambda$ is regular and integral, and let $w_0 \in W$ be the Weyl group element such that $\lambda = w_0 \lambda_0$ for  dominant $\lambda_0$.

We can tensor by the line bundle $\cO(\lambda)$ to obtain an identification
$$
\xymatrix{
\D( \cB) \ar[r]^-\sim & \D_\lambda(\cB) 
&
M\ar@{|->}[r] & M\otimes \cO(\lambda)
}$$
This is convenient since we will describe $\cW_\lambda$ in terms of the natural monoidal structure on the dg category $\D(B\bs \cB)$ of $B$-equivariant $\D$-modules on the flag variety $\cB$. Namely, under the above identification, we have a direct sum 
decomposition
$$
\xymatrix{
\cW_\lambda \simeq \oplus_{w\in W} \cW_{\lambda, w\lambda}
}$$
Each summand admits the description as the convolution of standard extensions $T_{w*} = j_{w*}\cO_w$
and costandard extensions  $T_{w!} = j_{w!}\cO_w$ off of Schubert cells 
$$
\xymatrix{
\cW_{\lambda, w\lambda}  \simeq T_{w_0!} * T_{w w_0^{-1} *} \simeq T_{w_0 w w_0^{-1} *}
}$$

(3) When $\lambda = 0$ is trivial, the fiber $\cW_{0}$ is the maximal tilting sheaf on $\cB$. Namely, within $\D(N\bs \cB)$,  it is the projective cover of the skyscraper sheaf at the closed Schubert cell.

%%%%%%%%%%%%%%%%%%%%%%%%%%%%%%%%%%%%%%%%%%%
%%%%%%%%%%%%%%%%%%%%%%%%%%%%%%%%%%%%%%%%%%%

\subsection{Example: $SL_2$}\label{specified}

We offer here a brief discussion of the structure of the universal Weyl sheaf $\cW$ in the case when $G=SL_2$.
Already here one can see how intricate topology is packaged in the simple algebra of $\cW$. More specifically,
the simple notion of differential operators on $N\bs SL_2/N$ interpolates between standard, costandard and tilting sheaves
as we specialize parameters.

Let us identify $\fh^*\simeq \BA^1$ so that $\Lambda^* \simeq \BZ$ with the reflection action of the Weyl group $W \simeq \BZ/2$
centered at $-\rho = -1\in  \BZ\subset\BA^1$. Thus $-1\in \BZ\subset \BA^1$ is the unique singular parameter, the rest of the integers $\BZ\setminus \{-1\}$ are regular integral, and the rest of the parameters $\fh^*\setminus \BZ$ are generic (not at all integral). 

Let us identify 
$$
\xymatrix{
SL_2/N \simeq \BA^2 \setminus \{(0,0)\} = \Spec \BC[x,y] \setminus\{(0,0)\}
}$$ so that 
 the left action of $B$ has orbits 
$$\xymatrix{
i:V = \G_m \times \{0\} \ar@{^(->}[r] & SL_2/N
}$$
and the complement 
$$
\xymatrix{
j:U = \BA^1 \times \G_m  \ar@{^(->}[r] & SL_2/N
}
$$ 
and the right action of $H \simeq \G_m$ is the usual scaling dilation.
Thus the left action of the diagonal torus $T \subset B$
coincides with the right action of $H$ on the closed orbit $V $ and is its inverse on the open orbit 
$U$. Note that the left action of $N\simeq \BA^1$ consists of the individual points of 
the closed orbit $V $ and the slices
$ \BA^1 \times \{y\} \subset U $ of the open orbit.

The $N$-equivariant ring of differential operators on $G/N$ is given by Hamiltonian reduction
$$
\cW = \cD_{G/N}/(\fn)
$$
where $(\fn) \subset \cD_{G/N}$ is the left ideal generated by the vector field $y\partial_x$ arising from the left $N$-action.
Thus the coisotropic (but not Lagrangian) singular support of $\cW$ is the union of conormals to $N$-orbits and explicitly cut out by the single equation $y\xi_x = 0$.

Let us specialize $\cW$ to prescribed monodromies for the right $H$-action. For $\lambda\in \fh^* \simeq \BA^1$, the corresponding fiber is the quotient
$$
\cW_\lambda = \cD_{G/N}/(\fn, \fh_\lambda)
$$
where $(\fn,  \fh_\lambda) \subset \cD_{G/N}$ is the left ideal generated by the vector field $y\partial_x$ arising from the left $N$-action, and the differential operator $x\partial_x + y\partial_y - \lambda$ coming from prescribing the monodromy of the right $H$-action.
Thus $\cW_\lambda$ is regular holonomic with singular support the union of conormals to $B$-orbits and explicitly cut out by the equations $y\xi_x = x\xi_x + y\xi_y = 0$. From these equations, we see its characteristic cycle is the weighted sum of conormals
to $B$-orbits
$$
cc(\cW_\lambda) = 2\cdot T^*_{V} + T^*_{U} 
$$

Now when $\lambda\in \fh^*\setminus \BZ$ is generic, $\cW_\lambda$ splits as a direct sum
$$
\cW_\lambda = i_*\cL_{V, \lambda} \oplus j_*\cL_{U, \lambda}
$$
of the standard extensions of the local systems on $B$-orbits
$$
\xymatrix{
\cL_{V, \lambda} = \D_V/(x\partial_x - \lambda) & 
\cL_{U, \lambda} = \D_U/(\partial_x, y\partial_y - \lambda) 
}$$
Observe that because $\lambda$ is generic, the standard extension off of the open orbit $U$ is equivalent to the costandard extension
$$
j_!\cL_{U, \lambda} \simeq j_*\cL_{U, \lambda}
$$
(Of course, there is no difference in the standard and costandard extensions off of the closed orbit $V$.)
The characteristic cycles of the summands are the sums of conormals to $B$-orbits
$$
\xymatrix{
cc(i_*\cL_{V, \lambda}) = T^*_{V}  & 
cc(j_*\cL_{U, \lambda}) = T^*_{V}  + T^*_U
}$$

When $\lambda\in\BZ\setminus\{-1\}$ becomes regular integral, $\cW_\lambda$ continues to split as a direct sum
but now in a more delicate form. Namely, when $\lambda \in \{0, 1,2,\ldots\}$ is ``positive", we have
$$
\cW_\lambda = i_*\cL_{V, \lambda} \oplus j_*\cL_{U, \lambda}
$$
and when $\lambda \in \{-2,-3,-4,\ldots\}$ is ``negative", we have
$$
\cW_\lambda = i_*\cL_{V, \lambda} \oplus j_!\cL_{U, \lambda}
$$
From the perspective of the monadic symmetries governing localization, this asymmetry reflects the choice of global sections functor. For example, going back to Borel-Weil-Bott, we see  that the global sections of line bundles leads to 
the asymmetry of cohomological shifts.

Finally, when $\lambda = -1$ is singular, $\cW_0$ no longer splits as a direct sum but becomes
the indecomposable tilting extension of the structure sheaf of the open orbit
$$
\cW_0 = \cO_U^{\text{\it tilt}} =\D_{G/N}/(y\partial_x, x\partial_x + y\partial_y)
$$
Thus it is self-dual and admits an increasing filtration
$$
\cW^0_0 \subset \cW^1_0 \subset \cW_0
$$
with associated graded
$$
\xymatrix{
\cW^0_0 \simeq i_*\cO_V & 
\cW^1_0/\cW^0_0 \simeq \cO_{G/N} &
\cW_0/\cW^1_0 \simeq i_*\cO_{V}
}
$$

To see any of the preceding identifications of $\cW_\lambda$ explicitly, one can restrict to the transverse line $\BA^1 \simeq \{x=1\}$
to find
$$
\cW_\lambda|_{\{x=1\}} \simeq \D_{\BA^1}/(y^2\partial_y - \lambda y) \simeq  \D_{\BA^1}/(y\partial_y y - (\lambda+1) y)  
$$
In particular, when $\lambda = -1$ is singular, we find the traditional algebraic presentation $ \D_{\BA^1}/(y\partial_y y)$ 
of the tilting extension of the structure sheaf of $\G_m$ to all of $\BA^1$.
%%%%%%%%%%%%%%%%%%%%%%%%%%%%%%%%%%%%%%%%%%%
%%%%%%%%%%%%%%%%%%%%%%%%%%%%%%%%%%%%%%%%%%%
%
%\section{Appendix}
%
%
%Recall that line bundles on $ \cB$ are classified by their degree in the dual
%$ \Lambda_\fg^* =  \Hom(Q_\fg, \Gm)$, of the coroot lattice $Q_\fg \subset \Lambda_H = \Hom(\Gm, H)$, and $G$-equivariant line bundles are classified by their degree in $\Lambda^*_H = \Hom(H, \Gm) $. Forgetting $G$-equivariance corresponds to the natural restriction map $\Lambda_H^* \to   \Lambda^*_\fg$ (which is an isomorphism when $G$ is simply-connected).
%
%Similarly, line bundles on $ \cB \times \cB$ are classified by their degree
%in $ \Lambda_\fg^*\times  \Lambda_\fg^* $, and 
% $G$-equivariant line bundles are classified by their degree in the dual $\cK^*_G$
% to the kernel of the natural map given by addition and projection
% $$
%\xymatrix{
%   \Lambda_H \times  \Lambda_H \ar[r]^-+ &  \Lambda_H \ar[r] & \pi_1(G) \simeq \Lambda_H/ Q_\fg
%} $$
%Forgetting $G$-equivariance corresponds to the natural restriction map $\cK_G^* \to   \Lambda^*_\fg \times\Lambda_\fg^*$ (which is an isomorphism when $G$ is simply-connected).
%
%Let $\rho \in \Lambda_\fg^*$ denote half the sum of the positive roots,
%so that the corresponding line bundle $\cO_{\cB}(-\rho)$ is the square-root of the canonical bundle of $\cB$.
%Observe that $\cO_{\cB}(-\rho) \boxtimes \cO_{\cB}(-\rho)$ is naturally a $G$-equivariant line bundle on $\cB\times \cB$.
% 
%

%%%%%%%%%%%%%%%%%%%%%%%%%%%%%%%%%%
%%%%%%%%%%%%%%%%%%%%%%%%%%%%%%%%%%
%%%%%%%%%%%%%%%%%%%%%%%%%%%%%%%%%%

\section{Appendix: Demazure Descent}\label{Demazure section}
In this section we present a simple uniform description of the Grothendieck-Springer monad on $\cQ(\gtil)$ expressing descent to $\fg^\ast$ and the Weyl monad on $\cD(G/N)_H$ expressing localization of $U\fg\module$. Namely we show they both come from the {\em Demazure monad} $\fd$, an algebra in the coherent or Demazure Hecke category $\DD=(\cQ^!(\BGB),\ast)$, via lax actions of $\DD$ on the corresponding categories. We are grateful to Arkhipov and Kanstrup~\cite{AK1} for suggesting the relevance of Demazure algebras to the results of this paper.

In order to see the Grothendieck-Springer resolution and Beilinson-Bernstein localization on equal footing, it is convenient to pass from sheaves on cotangents and $\cD$-modules to the dual perspective of sheaves on Dolbeault and de Rham spaces, respectively. The de Rham functor of a smooth scheme $X$ can be described as the quotient of $X$ by the formal neighborhood of the diagonal, while the Dolbeault functor is the relative classifying stack of the formal group of the tangent bundle of $X$. The general formalism of inf-schemes~\cite{GR} handles such objects (more generally, quotients of schemes by formal groupoids) on an equal footing with ordinary schemes. The extension of the functor $\cQ^!$ of ind-coherent sheaves to (correspondences of) inf-schemes developed in~\cite{GR} allows one to treat $\cD$-modules (together with strong or weak equivariance) and coherent sheaves on an equal footing.

\begin{notation} In this appendix, for a smooth scheme $X$ we denote by $\bX$ either 
\begin{enumerate}
\item the de Rham space $X_{dR}$, in which case $\cQ^!(\bX)=\cD(X)$,
\item the Dolbeault space $X_{Dol}$, i.e., the classifying space of the formal group of the tangent bundle of $X$, in which case
$\cQ^!(\bX)=\cQ^!(T^*X)$.
\end{enumerate} 
%In both cases the defining map $X\to \bX$ is inf-proper, defining an adjunction (induction/forgetful functor) between $\cQ^!(X)$ and $\cQ^!(\bX)$.
\end{notation}

(We could likewise work with the Hodge inf-scheme $X_{Hod}\to{\mathbb A}^1/\Gm$ interpolating $X_{dR}$ and $X_{Dol}$~\cite{simpson}, with sheaves given by modules for the Rees algebra of $\cD$.)

\begin{example}
\begin{enumerate}
\item For a $G$-space $X$, the categories $\cQ^!(\bX/G)$ and $\cQ^!(\bX/\bG)$ recover the categories of weakly and strongly $G$-equivariant $\cD$-modules on $X$, respectively. The pullback functor $p^!$ along $p:\bX/G\to \bX/\bG$ is identified with the standard functor forgetting from strong to weak equivariance.
\item The category $\cQ^!(\bG/G)$ is identified with $\cQ(T^* G)^G\simeq \cQ(\fg^\ast)$ in the Dolbeault setting and $\cD(G)_G\simeq U\fg\module$ in the de Rham setting.
\item The category $\cQ^!(\bG/\bN H)$ is identified with $\cQ((T^* G/N)/H)=\QC(\gtil)$ (Dolbeault) and $\cD(G/N)_H$ (de Rham).
\end{enumerate}
\end{example}

The two spaces above are related by a correspondence
\xymatrix{&\bG/B\ar[dl]_-p\ar[dr]^-q&\\
\bG/\bN H && \bG/G }

\begin{lemma}\label{identify functors}
The functor $q_*p^!:\cQ^!(\bG/\bN H)\to\cQ^!(\bG/G)$
is identified with pushforward $\pi_*$ along the Grothendieck-Springer resolution 
$\pi:\gtil\to \fg^\ast$ in the Dolbeault setting and with global sections $\gamma_*:\cD(G/N)_H\to U\fg\module$  in the de Rham setting.
\end{lemma}

It is now easy to express the resulting monads geometrically by composing this correspondence with its opposite:
\begin{equation}\label{BGB correspondence}
\xymatrix{&&\ar[dl]\bG/B\times_{pt/B}  \BGB \ar[dr]&&\\
&\bG/B\ar[dl]\ar[dr]&&\bG/B\ar[dl]\ar[dr]&\\
\bG/\bN H && \bG/G && \bG/\bN H}
\end{equation}
We see that the Grothendieck-Springer and Beilinson-Bernstein adjunctions are controlled by the groupoid $\BGB$ through its action on $\bG/B$. We will now make this relation precise.

\subsection{Equivariance for groupoids}

In order to gain some perspective on the Demazure monad, it is useful to consider it in the setting of groupoid actions.
Thus for an ind-proper morphism $p:X\to Y$ let $\cG=X\times_Y X$ denote the descent groupoid, with source and target the two projections
$\pi_1,\pi_2:\cG\to X$. (We will apply this to $p:X=pt/B\to Y=pt/G$, with $\cG=\BGB$.)

Let $\cH=\cQ^!(\cG)$, the Hecke category of $\cG$, 
denote the monoidal category of ind-coherent sheaves under convolution.
It is inherited on applying $\cQ^!$ to the structure on $\cG$ of algebra object in correspondences.
(See~\cite[Sections II.2.5.1, III.3.6.3]{GR} for a general treatment of ind-proper groupoids and more generally monoid or Segal stacks and the corresponding convolution categories.)  The diagonal embedding (unit map) $i:X\to \cG$ induces a monoidal functor $\cQ^!(X)\to \cH.$

Let ${\mathbf H}$, the Hecke algebra of $\cG$, denote the descent monad for $p:X\to Y$, an algebra object structure on the functor $\pi_{2,*}\pi_1^!\simeq p^!p_*\in End(\cQ^!(X))$. Thus by ind-proper descent we have an identification $$
{\mathbf H}\module_{\cQ^!(X)}\simeq \cQ^!(X)^\cG\simeq  \cQ^!(Y)$$ of ${\mathbf H}$-modules, $\cG$-equivariant sheaves on $X$ and sheaves on $Y$.

\begin{defn} For a $\cH=\cQ^!(\cG)$-module $\cM$, we define the category of $\cG$-equivariant objects to be $$\cM^\cG:=Hom_{\cH}(\cQ^!(X),\cM).$$
\end{defn}

The following proposition (while not strictly needed) gives a handy picture of equivariance as modules for a 
categorical form of the averaging idempotent in a finite group algebra:

\begin{prop} 
\begin{enumerate}
\item The object $e=\omega_{\cG}\in \cH$ has a canonical structure of algebra object, which is taken to the Hecke algebra ${\mathbf H}$ under the monoidal action map $\cH\to End(\cQ^!(X))$. Thus we have an equivalence $\cQ^!(Y)=\cQ^!(X)^\cG\simeq \cQ^!(X)_{e}$ of equivariant sheaves with $e$-modules in $\cQ^!(X)$.
\item More generally, for any $\cH$-module $\cM$ we have an identification $\cM^\cH\simeq \cM_e$ of equivariant objects with $e$-modules in $\cM$.
\end{enumerate}
\end{prop}

\begin{proof}
The first assertion follows from base-change and ind-proper descent.
It follows from the ind-properness of $p$ that $\cH$ is rigid, while the QCA property of $X$ implies the self-duality of $\cQ^!(X)$ over $k$. Together these properties imply that $\cQ^!(X)$ is naturally self-dual over $\cH$.
It follows that for any $\cH$-module $\cM$ we have 
\begin{eqnarray*}
\cM^\cG&=&Hom_\cH(\cQ^!(X),\cM)\\
&\simeq& \cQ^!(X)\ot_\cH \cM \\
&\simeq& \cH_e \ot_\cH\cM\\
&\simeq& \cM_e
\end{eqnarray*}
where the last step is~\cite{BFN}[Proposition 4.1], or rather its 
straightforward extension from symmetric monoidal to monoidal categories from~\cite{morita}[Proposition 3.1]. (In particular we have $\cQ^!(X)^\cG=\cQ^!(Y)$ so the notation is consistent.)
\end{proof}

%\begin{remark}[Quasicoherent version] We could instead work with quasicoherent Hecke categories $\cH^*=\cQ(\cG)$, in which the structure sheaf $e^*=\cO_{\cG}$ forms a canonical coalgebra object, acting as the descent comonad $p^*p_*$ for $p:X\to Y$ or its base changes.
%\end{remark}

%\begin{prop}[\cite{BFN}[Proposition 4.1],\cite{morita}[Proposition 3.1]] \label{algebras and monads}
%Let $\cC$ be a stable presentable monoidal $\infty$-category, and
%$A\in \cC$ an associative algebra object.  For any $\cC$-module $\cM$,
%there is a canonical equivalence of $\infty$-categories
%$$
%{}_A\cC \ot_\cC \cM \simeq {}_A\cM.
%$$
%\end{prop}

\subsection{The Demazure Hecke Category}

\begin{defn}
Let $\DD=(\cQ^!(\BGB),\ast)$ denote the Demazure Hecke category. The Demazure monad $\fd\in Alg(\DD)$ is the dualizing complex $\omega_{\BGB}$ with its natural algebra structure. 
\end{defn}

\begin{example}~\cite{AK2}
For any $G$-space $Z$, we have the $\DD$-module category $\cQ^!(Z/B)$.
The equivariants are given by $$\cQ^!(Z/B)^\DD\simeq \cQ^!(Z/B)_\fd\simeq \cQ^!(Z/G),$$
where the last equivalence follows from proper descent: since $Z/B\to Z/G$ is a base change of $pt/B\to pt/G$, the descent monad in $End(\cQ^!(Z/B))$ is given by the image of $\fd\in Alg(\DD)\to Alg(End(\cQ^!(Z/B))).$
\end{example}

\begin{remark} A corresponding result on the level of $K$-groups was proved in~\cite{HLS}, extending Demazure's description of the Weyl character formula (the case $Z=pt$).
\end{remark}

\begin{example}\label{intermediate} Specializing to $Z=\bG$, we find an action of $\DD$ by $\bG$-endomorphisms of $\cQ^!(\bG/B)$ -- i.e. (in the de Rham setting) a monoidal functor $$\DD=\cQ^!(B\backslash G/B)\to End_{\cD(G)}\cD(G)_B\simeq \cD_B(G)_B$$ to bi-weakly $B$-equivariant $\cD$-modules on $G$. This in turn comes (by passing to [weak] $B$-equivariants) from the induction functor $\cQ^!(G)\to \cQ^!(\bG)=\cD(G)$ associated to the group homomorphism $G\to \bG$, which is monoidal for convolution. The Demazure monad $\fd$ is taken to [the natural $B$ bi-equivariant structure on] the induced $\cD$-module $\cD^r_{G}$ on $G$.
\end{example}

We will be interested in transporting (lax) $\DD$-actions along adjunctions.
Given an adjunction
$$\xymatrix{
\phi:\cN\ar@{<->}[r]  & \cM: \psi}
$$ 
there is a natural lax monoidal structure on the functor $$\Psi:End(\cM)\to End(\cN),
\hskip.3in \Psi(T)=\psi\circ T \circ \phi$$
-- e.g., on the level of individual endomorphisms 
we have a map $$\Psi(T)\circ \Psi(U)= \psi\circ T \circ \phi \circ\psi \circ U \circ \phi \longrightarrow 
\psi\circ T \circ  U \circ \phi.=\Psi(T\circ U).$$
Thus if $\cM$ is a module category for $\DD$, i.e., we are given a monoidal functor $\DD\to End(\cM)$, then $\cN$ becomes a lax module category via the lax monoidal functor $\DD\to End(\cM)\to End(\cN)$.
In particular, we obtain a functor \begin{equation}\label{getting monads}
Alg(\DD)\longrightarrow Alg(End(\cN))\end{equation} from algebras in $\DD$ to monads on $\cN$.

We apply this construction to the adjunction

$$ \xymatrix{
p^!:\cN=\cQ^!(\bG/\bN H) \ar@{<->}[rr]  && \cM=\cQ^!(\bG/B):p^{!,R}
}$$
where $p^!$ forgets strong to weak $N$-equivariance (which preserves compact objects) 
and $p^{!,R}$ is its continuous right adjoint (informally, taking invariants for $U\fn$, rather than coinvariants as in the left adjoint).  

\begin{remark}\label{groupoid in correspondences}
Geometrically, such lax actions come from considering groupoid objects in the category of correspondences. For example, if $\cG\actson X$ is a groupoid and $X\to Y$ a morphism, then $\cG$ forms a groupoid object in the category of correspondences over $Y$ -- in particular the composition is given by a correspondence
$$\xymatrix{ \cG\times_Y \cG&\ar[r]\ar[l]\cG\times_X \cG& \cG}.$$
In the setting of Diagram~(\ref{BGB correspondence}) we see that $\cG=\bG/B\times_{pt/B}\BGB\actson X= \bG/B$ forms a groupoid in correspondences over $Y=\bG/\bN H$.
\end{remark}

\begin{prop} 
\begin{enumerate}
\item The categories $\cN=\cQ^!(\bG/\bN H)$, i.e., $\cQ^!(\gtil)$ (Dolbeault) and $\cD(G/N)_H$ (de Rham),
carry natural lax $\DD$-module structures. These actions come from the natural lax monoidal functor (induction)
$$\DD=\cQ^!(\BGB)\to \cQ^!(H\bN \backslash \bG/\bN H)=\cD_H(N\backslash G/N)_H$$
(where the last identification with the Hecke category is in the de Rham version).
\item Under the resulting functor~(\ref{getting monads}) the Demazure monad $\fd$ is taken to the Grothendieck-Springer and Weyl monads, respectively. Thus we find an identification of the Weyl monad with 
the universal Weyl sheaf $\cW$ of Lemma~\ref{universal Weyl sheaf}.
\item The resulting categories of modules for the Demazure monad are identified as $\cQ^!(\gtil)^\fd\simeq \cQ(\fg^\ast)$ and $\cD(G/N)_H^\fd\simeq U\fg\module$. 
\end{enumerate}
\end{prop}       

\begin{proof} The lax action is the result of the $(p^!,p^{!,R})$ adjunction above, or geometrically of the correspondence action from Remark~\ref{groupoid in correspondences}. By construction it commutes with the strong $G$-action, hence factors through a lax monoidal functor to the Hecke category, which is given by pushforward along the composition $$\BGB \to B\backslash \bG /B \to H\bN\backslash \bG/\bN H$$ -- the first pushforward (induction) encodes the action of $\DD$ on $\cQ^!(\bG/B)$ as in Example~\ref{intermediate}, while the second (imposing strong $N$ bi-equivariance, i.e., taking invariants for the $\fn$-actions) is only lax monoidal. 

The second assertion follows from Lemma~\ref{identify functors}, so ultimately from basechange for the diagram~(\ref{BGB correspondence}). In particular, the description of the action via induction provides an identification of the Weyl monad with the induction of the Demazure monad, so that the underlying sheaf is the sheaf of differential operators on $N\backslash G/N$: imposing strong $N$-biequivariance on the (weakly biequivariant) induced module $\cD^r$ on $G$ produces the induced module $\cD^r$ on $N\backslash G/N$, which is identified with $\cD$ by the Calabi-Yau structure on $G/N$.
Since we've already established monadicity in both Grothendieck-Springer and Beilinson-Bernstein situations, the third assertion follows.
\end{proof}

\begin{remark} There is a parallel comonadic story where we use (as in~\cite{AK2}) the quasicoherent version $\DD^*=(\cQ(\BGB),\ast)$ and the natural coalgebra object $\fd^*=\cO_{\BGB}$. The category $\cQ(\bG/\bN H)$ is endowed with an op-lax $\DD^*$-module structure and $\fd^*$ is taken to the Grothendieck-Springer and Beilinson-Bernstein comonads.
\end{remark}

\begin{remark}[Borel-Weil-Bott]
Passing to $G$-equivariant sheaves, we find the Borel-Weil-Bott adjunction (parabolic induction/restriction)
$$ \xymatrix{
\cQ^!(\bG\bs \bG/\bN H)=Rep(H) \ar@{<->}[r]  & \cQ^!(\bG\bs \bG/G) = Rep(G)}
$$
in both Dolbeault and de Rham settings, recovering the original appearance of Demazure operators in the Weyl character formula.
\end{remark}

\begin{remark}[Whittaker objects]  
Let us briefly indicate the case of Whittaker modules. Fix a non-degenerate character $\psi$ of $\fn$.
We may then pass to Whittaker objects in any strong $G$-category, i.e., objects strongly 
equivariant for $N$ against the character $\psi$ (see e.g.~\cite{dario}). When applied to $U\fg\module$, we obtain the category of Whittaker $U\fg$-modules, which by Kostant and Skryabin is identified with modules for $Z(U\fg)$. The same category arises in the classical limit, as sheaves on the Kostant slice $\fg^{\ast}//_{\psi} N\simeq \fc$. On the other hand, Whittaker $\cD$-modules on $G/N$ are identified with $\cD$-modules on the big cell $\cD(N\bs Nw_0HN/N\simeq \cD(H)$. Thus we find the following adjunction:$$ \xymatrix{
\cD(G/N)_H^{(N,\psi)}\simeq \cD(H)_H\simeq \cQ^!(\fh^\ast) \ar@{<->}[rr]  && U\fg\module^{(N,\psi)} \simeq \fZ\fg\module\simeq \cQ^!(\fh^\ast//W)}.
$$
In the classical limit we find the same adjunction, realized as the restriction of the Grothendieck-Springer resolution to the Kostant slice. 
We thus find that in the Whittaker setting the Beilinson-Bernstein or Demazure monads are taken to the nil-Hecke algebra $\BH$, which controls descent from $\fh^\ast$ to the coarse quotient $\fh^\ast//W$. 
\end{remark}

%%%%%%%%%%%%%%%%%%%%%%%%%%%%%%%%%%%%%%%%%%%
%%%%%%%%%%%%%%%%%%%%%%%%%%%%%%%%%%%%%%%%%%%
\end{document}